\newcommand*{\barfix}[2][.175ex]{%
  \mathpalette{\@barfix{#1}}{#2}%
}
\newcommand*{\@barfix}[3]{%
  \vbox{%
    \kern#1\relax
    \hbox{$#2#3\m@th$}%
  }%
}
\newcommand{\sat}{\textit{sat}}
\renewcommand{\exp}{\text{exp}}
\newtheorem{theorem}{Theorem}
\newtheorem{thm}{Theorem}[section]
\newtheorem{observation}[thm]{Observation}
\newtheorem{corollary}[thm]{Corollary}
\newtheorem{lemma}[thm]{Lemma}
\newtheorem{claim}[thm]{Claim}
\newcommand{\footremember}[2]{%
    \footnote{#2}
    \newcounter{#1}
    \setcounter{#1}{\value{footnote}}%
}
\newcommand{\footrecall}[1]{%
    \footnotemark[\value{#1}]%
} 
\newcommand\wsat{\mathop{\mbox{$w$-$\mathit{sat}$}}}
\renewcommand{\Pr}{\mathbb{P}}
\begin{document}

\title{Saturation in Random Hypergraphs}
\author{%
Sahar Diskin \footremember{alley}{School of Mathematical Sciences, Tel Aviv University, Tel Aviv 6997801, Israel.}%
\and Ilay Hoshen \footrecall{alley}%
\and D\'aniel Kor\'andi \footremember{trailer}{Z\"urich, Switzerland.}%
\and Benny Sudakov \footremember{trailer2}{Department of Mathematics, ETH, Z\"urich, Switzerland. Research supported in part by SNSF grant 200021\_228014.}%
\and Maksim Zhukovskii \footremember{alley2}{Department of Computer Science, The University of Sheffield, Sheffield S1 4DP, United Kingdom.}
}

\date{}
\maketitle

\begin{abstract}
Let $K^r_n$ be the complete $r$-uniform hypergraph on $n$ vertices, that is, the hypergraph whose vertex set is $[n] \coloneqq \{1,2,\ldots,n\}$ and whose edge set is $\binom{[n]}{r}$. We form $G^r(n,p)$ by retaining each edge of $K^r_n$ independently with probability $p$. 

An $r$-uniform hypergraph $H\subseteq G$ is \textit{$F$-saturated} if $H$ does not contain any copy of $F$, but any missing edge of $H$ in $G$ creates a copy of $F$. Furthermore, we say that $H$ is \textit{weakly $F$-saturated} in $G$ if $H$ does not contain any copy of $F$, but the missing edges of $H$ in $G$ can be added back one-by-one, in some order, such that every edge creates a new copy of $F$. The smallest number of edges in an $F$-saturated hypergraph in $G$ is denoted by $\sat(G,F)$, and in a weakly $F$-saturated hypergraph in $G$ by $\wsat(G,F)$.

In 2017, Kor\'andi and Sudakov initiated the study of saturation in random graphs, showing that for constant $p$, with high probability $\sat(G(n,p),K_s)=(1+o(1))n\log_{\frac{1}{1-p}}n$, and $\wsat(G(n,p),K_s)=\wsat(K_n,K_s)$. Generalising their results, in this paper, we solve the saturation problem for random hypergraphs $G^r(n,p)$ for cliques $K_s^r$,
for every $2\le r < s$ and constant $p$.
\end{abstract}

\section{Introduction and main results}
Denote by $K^r_n$ the complete $r$-uniform hypergraph on $n$ vertices, that is, the hypergraph whose vertex set is $[n] \coloneqq \{1,2,\ldots,n\}$ and whose edge set is $\binom{[n]}{r}$. For fixed $r$-uniform hypergraphs $F$ and $G$, we say that a hypergraph $H\subseteq G$ is (strongly) $F$-\textit{saturated} in $G$, if $H$ does not contain any copy of $F$, but adding any edge $e\in E(G)\setminus E(H)$ to $H$ creates a copy of $F$. We let $\sat(G, F)$ denote the minimum number of edges in an $F$-saturated hypergraph in $G$. The problem of determining $\sat(K^2_n, K^2_s)$ was raised by Zykov \cite{Z49} in 1949, and independently by Erd\H{o}s, Hajnal, and Moon \cite{EHM64} in 1964. They showed that $\sat(K^2_n, K^2_s)=\binom{n}{2}-\binom{n-s+2}{2}$. Their result was later generalised by Bollob\'as \cite{B65} who showed that $\sat(K^r_n, K^r_s)=\binom{n}{r}-\binom{n-s+r}{r}$. 

We say that an $r$-uniform hypergraph $H\subseteq G$ is \textit{weakly} $F$-saturated in $G$ if $H$ does not contain any copy of $F$, but the edges of $E(G)\setminus E(H)$ admit an ordering $e_1, \ldots, e_k$ such that for each $i\in[k]$, the hypergraph $H_i=H\cup\{e_1,\ldots, e_i\}$ contains a copy of $F$ containing the edge $e_i$. We define $\wsat(G, F)$ to be the minimum number of edges in a weakly $F$-saturated subhypergraph in $G$. Note that the saturation is in fact a restriction of the weak saturation, where one is allowed only to add all the edges at once (simultaneously). Hence, any $H$ which is $F$-saturated in $G$ is also weakly $F$-saturated in $G$, and thus $\wsat(G,F)\le \sat(G,F)$. As such, the weak saturation can be viewed as a natural extension of the saturation. The problem of determining $\wsat(K^r_n, K^r_s)$ was first raised by Bollob\'as \cite{B68}, who conjectured that $\wsat(K^r_n, K^r_s)=sat(K^r_n, K^r_s)$. Using ingenious algebraic methods, this conjecture was verified by Alon \cite{A85}, Frankl \cite{F82}, Kalai \cite{K84, K85}, and Lov\'asz \cite{L77}:
\begin{thm}[\cite{A85,F82,K84,K85,L77}]\label{th: hypergraph}
\begin{align*}
    \wsat(K^r_n, K^r_s)=\sat(K^r_n, K^r_s)=\binom{n}{r}-\binom{n-s+r}{r}.
\end{align*}
\end{thm}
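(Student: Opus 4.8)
\emph{Proof plan.} Since every $F$-saturated hypergraph is weakly $F$-saturated, $\wsat(K^r_n,K^r_s)\le \sat(K^r_n,K^r_s)$, and the latter equals $\binom{n}{r}-\binom{n-s+r}{r}$ by Bollob\'as's theorem cited above (its extremal example is the family of all $r$-sets meeting a fixed $(s-r)$-set). Hence it suffices to prove the matching lower bound $\wsat(K^r_n,K^r_s)\ge \binom{n}{r}-\binom{n-s+r}{r}$, and for this I would use the linear-algebraic (``exterior algebra'') method of Lov\'asz, Frankl and Kalai. The goal is to assign to every edge $e\in\binom{[n]}{r}$ a vector $w_e$ in a vector space $W$ over a field so that:
\begin{enumerate}[(i)]
  \item (local spanning) for every $s$-set $S\subseteq[n]$ and every $e\in\binom{S}{r}$, $w_e\in\mathrm{span}\{w_f: f\in\binom{S}{r}\setminus\{e\}\}$;
  \item (dimension) $\dim\mathrm{span}\{w_e: e\in\binom{[n]}{r}\}\ge\binom{n}{r}-\binom{n-s+r}{r}$.
\end{enumerate}

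Granting such an assignment, the lower bound follows from a short closure argument. Let $H$ be weakly $K^r_s$-saturated in $K^r_n$, with a saturating order $e_1,\dots,e_k$ of the missing edges, and set $H_i=H\cup\{e_1,\dots,e_i\}$. When $e_i$ is added it lies in a new copy of $K^r_s$ on some $s$-set $S_i$, all of whose other edges are already present in $H_{i-1}$, so by (i) $w_{e_i}\in\mathrm{span}\{w_f: f\in E(H_{i-1})\}$. Inductively $\mathrm{span}\{w_f: f\in E(H_i)\}=\mathrm{span}\{w_f: f\in E(H)\}$ for all $i$, and since $H_k=K^r_n$ we obtain $\mathrm{span}\{w_f:f\in E(H)\}=\mathrm{span}\{w_e:e\in\binom{[n]}{r}\}$. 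Therefore $|E(H)|\ge\dim\mathrm{span}\{w_e:e\in\binom{[n]}{r}\}\ge\binom{n}{r}-\binom{n-s+r}{r}$ by (ii), as required.

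The heart of the matter is the construction of the $w_e$. For $r=2$ one may take the generic rigidity matroid in dimension $s-2$: pick generic points $p_1,\dots,p_n\in\mathbb{R}^{s-2}$ and let $w_{ij}\in(\mathbb{R}^{s-2})^{[n]}$ have $p_i-p_j$ in block $i$, $p_j-p_i$ in block $j$, and $0$ elsewhere. Here (i) amounts to the fact that $K_s$ is a circuit of the generic rigidity matroid in dimension $s-2$ (its Maxwell count is $\binom{s}{2}-1$, while every proper subgraph is independent), and (ii) is the classical fact that the generic rigidity rank of $K_n$ in dimension $s-2$ equals $(s-2)n-\binom{s-1}{2}=\binom{n}{2}-\binom{n-s+2}{2}$. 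For general $r$ I would use the higher-uniformity analogue built from generic vectors and alternating multilinear (exterior-algebra) forms, arranged so that each copy of $K^r_s$ again carries a single linear dependency. Two points need care: first, this dependency must have \emph{all} its coefficients nonzero, so that (i) holds for every edge of the copy, which follows by combining the $\mathrm{Sym}(S)$-symmetry of the construction with genericity of the chosen vectors; second, the rank in (ii) must be bounded below by $\binom{n}{r}-\binom{n-s+r}{r}$, which one does by specializing the generic parameters to a convenient combinatorial configuration --- e.g.\ one adapted to Bollob\'as's extremal family --- and exhibiting that many linearly independent $w_e$'s there.

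The main obstacle is designing the assignment $e\mapsto w_e$ so that (i) and (ii) hold \emph{simultaneously}: making the span large (condition (ii)) favours ``spread out'' vectors, while forcing every copy of $K^r_s$ to be dependent (condition (i)) pulls the other way, and the exact quantity $\binom{n}{r}-\binom{n-s+r}{r}$ has to emerge as a lower bound for the rank from the exterior-algebra bookkeeping. This balancing act --- together with the genericity argument guaranteeing that the $K^r_s$-dependency has full support --- is precisely the ingenious content of the proofs of Lov\'asz, Frankl and Kalai; the remaining steps above are routine.
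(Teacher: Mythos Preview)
The paper does not prove this theorem at all: it is stated as a cited result (Lov\'asz, Frankl, Kalai) and used as a black box, so there is no ``paper's own proof'' to compare against. Your proposal correctly identifies and outlines the standard linear-algebraic approach from those references --- assign to each $r$-set a vector so that every copy of $K^r_s$ is a circuit and the global rank is at least $\binom{n}{r}-\binom{n-s+r}{r}$, then run the closure argument --- which is exactly how the cited papers proceed.

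That said, what you have written is explicitly a \emph{plan} rather than a proof: you defer the actual construction of the vectors $w_e$ for general $r$, the verification that the $K^r_s$-dependency has full support, and the rank lower bound to ``the ingenious content of the proofs of Lov\'asz, Frankl and Kalai.'' This is honest but means the proposal does not stand on its own. If the intent was to supply a self-contained proof, the gap is precisely the exterior-algebra construction (e.g.\ Kalai's hyperconnectivity/symmetric-product assignment) and the rank computation; if the intent was only to indicate why the cited bound holds, your sketch is accurate and appropriate.
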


The binomial random graph $G(n,p)$ is obtained by retaining every edge of $K_n$ independently with probability $p$. Similarly, the binomial random $r$-uniform hypergraph $G^r(n,p)$ is obtained by retaining every edge of $K^r_n$ independently with probability $p$. In 2017, Kor\'andi and Sudakov \cite{KS17} initiated the study of the saturation and the weak saturation in random graphs. 
\begin{thm}[\cite{KS17}]
Let $p\in (0,1)$ and let $s$ be a constant. Then, \textbf{whp},
\begin{enumerate}[(a)]
    \item $\sat(G(n,p),K^2_s)=(1+o(1))n\log_{\frac{1}{1-p}}n$.
    \item $\wsat(G(n,p), K^2_s)=\wsat(K^2_n, K^2_s)$.
\end{enumerate}
\end{thm}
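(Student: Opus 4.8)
The two statements are established separately, and each is a matching pair of bounds. Throughout write $L\coloneqq\log_{1/(1-p)}n=\frac{\ln n}{\ln(1/(1-p))}$; this is the threshold governing domination-type events in $G(n,p)$, in the sense that for a \emph{fixed} set $S$ with $|S|=(1+o(1))L$ whp every other vertex has a neighbour in $S$, while for $|S|\le(1-\varepsilon)L$ there are whp $n^{\Omega(1)}$ vertices with no neighbour in $S$. This dichotomy drives both directions of part (a), while part (b) is fundamentally algebraic.

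\emph{Part (a), upper bound.} The plan is to exhibit an explicit $K_s$-saturated subgraph $H\subseteq G(n,p)$ with $(1+o(1))nL$ edges, built around a small ``core''. Reserve disjoint sets $A_1,\dots,A_{s-2}$, each of size $\Theta(\log n)$, let $B$ be the remaining $(1-o(1))n$ vertices, and first take $H$ to be the $(s-1)$-partite subgraph of $G(n,p)$ induced by the partition $(A_1,\dots,A_{s-2},B)$, which is automatically $K_s$-free. A missing edge $uv$ with $u,v\in B$ completes to a $K_s$ exactly when there is a transversal $(a_1,\dots,a_{s-2})\in A_1\times\cdots\times A_{s-2}$ forming a clique and joined to both $u$ and $v$; a first-moment calculation shows that, once the $|A_i|$ are taken just large enough, this fails for only $o(n)$ pairs, and those exceptional edges are simply added to $H$ (which keeps $H$ $K_s$-free, as by construction they have no completing clique in the core). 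Missing edges inside $A_i\cup A_j$ are handled the same way, more easily, since $B$ is enormous. Counting gives $|E(H)|=(1+o(1))p\bigl(\sum_i|A_i|\bigr)n$, and one optimises the part sizes against the covering requirement. The one subtle point of this direction is to design the core so that the constant comes out to exactly $1$ rather than the slightly larger value a naive bipartite version produces for $p$ bounded away from $0$.

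\emph{Part (a), lower bound.} Let $H\subseteq G(n,p)$ be $K_s$-saturated. If $v$ has small $H$-degree then, since adding any missing $G$-edge at $v$ must create a $K_s$ and hence in particular a common $H$-neighbour of $v$ with the new endpoint, every $u\in N_G(v)\setminus N_H(v)$ has a $G$-neighbour in $N_H(v)$; equivalently $N_G(v)\subseteq N_G[N_H(v)]$. For a \emph{fixed} vertex $v$ and a \emph{fixed} set $S$ of size $d\le(1-\varepsilon)L$: exposing first the edges not at $v$, whp $\bigl|V\setminus N_G[S]\bigr|=(1+o(1))n(1-p)^{d}\ge n^{\varepsilon/2}$, and then exposing the edges at $v$, the probability that $v$ avoids all of $V\setminus N_G[S]$ is $(1-p)^{|V\setminus N_G[S]|}=\exp(-n^{\Omega(1)})$. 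A union bound over the $n$ choices of $v$ and the $\le n^{(1-\varepsilon)L}=\exp(O(\log^2n))$ choices of $S$ shows that whp no $K_s$-saturated subgraph has a vertex of degree $\le(1-\varepsilon)L$; hence $\delta(H)\ge(1-o(1))L$ and $|E(H)|\ge(\tfrac12-o(1))nL$. Upgrading this to the sharp bound $|E(H)|\ge(1-o(1))nL$ \textbf{is the main obstacle of the whole proof}: a minimum-degree estimate alone cannot suffice, since the extremal configuration has most vertices of degree only $\approx L$. One must additionally exploit that the covering requirement $N_G(v)\subseteq N_H(v)\cup\bigcup_{w\in N_H(v)}N_H(w)$ forces a ``core'' of very high-degree vertices (a random $p$-dense neighbourhood cannot be covered by few small sets), and then carry out a careful two-sided count of the edges incident to this core to recover the remaining factor; making this precise, together with controlling the dependence of $H$ on $G$, is the technical heart.

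\emph{Part (b).} For the lower bound $\wsat(G(n,p),K_s)\ge\wsat(K_n,K_s)$ I would use the exterior-algebra method of Lov\'asz, Frankl and Kalai: assign to each edge $e$ of $K_n$ a vector $\psi(e)$ in a space of dimension $\binom n2-\binom{n-s+2}{2}$ so that (i) the $\psi(e)$ span the whole space, and (ii) for every copy $K$ of $K_s$ and every $e\in E(K)$ one has $\psi(e)\in\operatorname{span}\{\psi(f):f\in E(K)\setminus\{e\}\}$. Property (ii) means that in any weak-$K_s$-saturation process the span of the current edge-vectors never grows, so running the process inside $G(n,p)$ from a weakly saturated $H$ up to $G(n,p)$ itself yields $|E(H)|\ge\dim\operatorname{span}\{\psi(e):e\in E(G(n,p))\}$. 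The one new input is that whp the random edge set still spans the full space — and since $|E(G(n,p))|=(1+o(1))p\binom n2$ dwarfs the dimension and Kalai's construction is as generic as the $K_s$-dependencies permit, the rank of a random $p$-fraction of the edges is full whp by a short matroid argument. For the matching upper bound one constructs a weakly saturated $H\subseteq G(n,p)$ with exactly $\binom n2-\binom{n-s+2}{2}$ edges: fix an $(s-2)$-clique $Q$ of $G(n,p)$ (present whp), set $N=\bigcap_{q\in Q}N_G(q)$, place in $H$ all edges inside $Q$ and all edges from $Q$ to $N$, and attach every other vertex $v$ by $s-2$ edges into a $(s-2)$-clique $M_v\subseteq N\cap N_G(v)$ (which exists whp since $|N|=(1+o(1))p^{s-2}n$). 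An edge count gives exactly $\binom n2-\binom{n-s+2}{2}$ edges, and $H$ is $K_s$-free. One then orders $E(G(n,p))\setminus E(H)$ so each added edge makes a new $K_s$: first fill in $G[N]$ (each new edge, with $Q$, forms a $K_s$), after which $G[Q\cup N]$ is complete, each $M_v$ becomes a clique, and one can absorb the remaining vertices block by block, each step producing a new $K_s$. Verifying that this cascade indeed percolates to all of $G(n,p)$ — an expansion/bootstrap argument in the random graph — is the delicate part of (b).
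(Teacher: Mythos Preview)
This theorem is quoted from \cite{KS17} and not reproved in the present paper; the paper's hypergraph arguments, however, specialise to $r=2$ and recover the KS17 proofs, so I compare against those.

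Your proposal has genuine gaps in part (a). For the upper bound, the $(s-1)$-partite core is not the construction used, and --- as you yourself flag --- it does not obviously give leading constant $1$. The missing idea is Krivelevich's: one takes a \emph{single} set $A_1$ of size $\tfrac{1}{p}\log_{1/(1-p)}n$ and, inside $G[A_1]$, a $K_{s-1}$-free subgraph $H_1$ in which every moderately large vertex subset still contains a $K_{s-2}$ (this is Lemma~\ref{lm:good_subhypergraph - outline} at $r=2$). All $A_1$--$B$ edges of $G$ go into $H$, which already gives the edge count $(1+o(1))n\log_{1/(1-p)}n$, and a missing $B$-edge $uv$ is completed by locating a $K_{s-2}$ inside $H_1[N_{A_1}(u)\cap N_{A_1}(v)]$; two further tiny sets $A_2,A_3$ handle the exceptional edges. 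For the lower bound, your minimum-degree argument correctly reaches $(\tfrac12-o(1))nL$, but the plan you sketch to recover the missing factor is not the actual mechanism. What is done (see the beginning of Section~\ref{s: strong}, valid for all $r\ge 2$) is to set $A=\{v:d_H(v)\ge\log^2 n\}$ and $B=V\setminus A$, reduce to $|B|=(1-o(1))n$, and show that each $v\in B$ has at least $(1-o(1))L$ $H$-neighbours lying in $A$: if the set $W$ of witnesses $w$ (common $H$-neighbours completing the missing edges at $v$) had size below $L-5\log\log n$, then whp many $u\in N_G(v)$ would have no $G$-neighbour in $W$ at all, a contradiction. These $B$--$A$ edges are then counted once each from their $B$-endpoint, giving the full $(1-o(1))nL$.

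For part (b) your lower bound is overcomplicated and the ``short matroid argument'' is not substantiated. The actual argument is immediate: whp $G(n,p)$ is itself weakly $K_s$-saturated in $K_n$ (every non-edge of $G(n,p)$ closes a copy of $K_s$), so any $H$ weakly saturated in $G(n,p)$ is weakly saturated in $K_n$, and Theorem~\ref{th: hypergraph} gives the bound. Your upper-bound construction is in the right spirit, but the percolation step you defer as ``delicate'' is exactly where the content lies, and your sketch does not explain how to activate an edge between two vertices outside $Q\cup N$. The paper's construction (Section~\ref{s: proof}, specialised to $r=2$) assigns to each vertex $v$ a core $C_v$ chosen so that $G[C_0\cup C_v\cup\{v\}]$ is complete, and the activation proceeds through chains of such cores together with an auxiliary clique $\tilde C(e)$ for each edge $e$; this chain structure is what makes the cascade provably reach all of $G(n,p)$.
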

Note that there is quite a stark difference between the (strong) $K_s$-saturation number and the weak $K_s$-saturation number in $G(n,p)$. For the weak $K_s$-saturation, \textbf{whp} the answer is the same as in $K_n$, whereas for the $K_s$-saturation \textbf{whp} there is an additional $\log n$ factor compared with $K_s$-saturation in $K_n$. 

Since the work of Kor\'andi and Sudakov, there have been several papers devoted to the study of $\sat(G(n,p),F)$ and $\wsat(G(n,p),F)$ for graphs $F$ which are not cliques, in particular when $p$ is constant. Mohammadian and Tayfeh-Rezaie \cite{MT18} and Demyanov and Zhukovskii \cite{DZ22} proved tight asymptotics for stars, $F=K_{1,s}$. Considering cycles, Demidovich, Skorkin, and Zhukovskii \cite{CycleSat} showed that \textbf{whp} $\sat(G(n,p),C_m)=n+\Theta\left(\frac{n}{\log n}\right)$ for $m\ge 5$, and \textbf{whp} $\sat(G(n,p),C_4)=\Theta(n)$. Considering a more general setting, Diskin, Hoshen, and Zhukovskii \cite{DHZ23} showed that for every graph $F$, \textbf{whp} $\sat(G(n,p),F)=O(n\log n)$, and gave sufficient conditions for graphs $F$ for which \textbf{whp} $\sat(G(n,p),F)=\Theta(n)$. As for the weak saturation, Kalinichenko and Zhukovskii \cite{KZ23} gave sufficient conditions on $F$ for which \textbf{whp} $\wsat(G(n,p),F)=\wsat(K_n,F)$, and Kalinichenko, Miralaei, Mohammadian, and Tayfeh-Rezaie \cite{KMMT23} showed that, for any graph $F$, \textbf{whp} $\wsat(G(n,p),F)=(1+o(1))\wsat(K_n,F)$.

Another natural and challenging direction is to extend the results of Kor\'andi and Sudakov to hypergraphs. Indeed, in their paper from 2017, they asked whether their results could be extended to $r$-uniform hypergraphs. In this paper, we answer that question in the affirmative:
\begin{theorem}\label{th: wsat}
Let $p\in (0,1)$, and let $2\le r<s$ be constants. Then, \textbf{whp},
\begin{enumerate}[(a)]
\item $\wsat(G^r(n,p), K^r_s)=\wsat(K^r_n, K^r_s)$. \label{i: weak}
    \item $\sat(G^r(n,p), K^r_s)=(1+o(1))\binom{n}{r-1}\log_{\frac{1}{1-p^{r-1}}}n$. \label{i: strong}
    \end{enumerate}
\end{theorem}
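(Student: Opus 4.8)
I would prove the two parts separately, in each case matching an upper and a lower bound; part~\ref{i: weak} rests on the algebraic machinery behind Theorem~\ref{th: hypergraph}, while part~\ref{i: strong} is a ``coupon collector''\ phenomenon dressed up in the quasirandomness of $G^r(n,p)$.

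\medskip
\noindent\textbf{Part \ref{i: weak}, lower bound.} I would use the exterior-algebra / generic-rigidity method of Lov\'asz, Frankl and Kalai underlying Theorem~\ref{th: hypergraph}: fix an assignment $e\mapsto v_e$ of vectors to the $r$-sets of $[n]$ with the exchange property that, for every copy $K$ of $K^r_s$ and every $e\in K$, $v_e\in\mathrm{span}\{v_f:f\in K,\ f\ne e\}$, and with $\dim\mathrm{span}\{v_e:e\in\binom{[n]}{r}\}=\binom{n}{r}-\binom{n-s+r}{r}$. Running a weak-saturation process shows $\wsat(G^r(n,p),K^r_s)\ge\dim\mathrm{span}\{v_e:e\in E(G^r(n,p))\}$, so it suffices to prove that \textbf{whp} the random sub-collection of vectors still spans the whole space. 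The obstacle is that the associated representable matroid has far too many hyperplanes for a union bound over cocircuits; I would instead exploit its structural robustness by revealing the vertices of $G^r(n,p)$ one at a time and arguing, via an exposure martingale together with the recursive structure of the extremal configuration (``all $r$-sets meeting a fixed $(s-r)$-set''), that \textbf{whp} each newly revealed vertex raises the rank of the revealed edges by the correct amount, so that after all $n$ vertices the full rank is attained.

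\medskip
\noindent\textbf{Part \ref{i: weak}, upper bound.} Start from the canonical extremal hypergraph for $K^r_n$, all $r$-sets meeting a fixed $(s-r)$-set $S$, and restrict it to $G^r(n,p)$; the resulting $H_0$ is $K^r_s$-free and has at most $\binom{n}{r}-\binom{n-s+r}{r}$ edges. The only point to verify is that the remaining edges of $G^r(n,p)$ (those disjoint from $S$) can be ordered so that each creates a new copy of $K^r_s$. The difficulty is that for an $r$-set $e$ disjoint from $S$ the natural copy on $S\cup e$ need not be complete in $G^r(n,p)$. I would proceed in two phases: first add every $e$ for which $S\cup e$ is complete in $G^r(n,p)$ (a positive fraction of them), and then add the rest in, say, colexicographic order, using that these newly present edges together with the near-perfect links of $G^r(n,p)$ furnish alternative copies of $K^r_s$ through the outstanding $r$-sets; a small reserve of extra edges kept in $H_0$ from the outset seeds the cascade while the total stays below $\binom{n}{r}-\binom{n-s+r}{r}$. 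I expect the honest bookkeeping of this ordering to be the most tedious step of part~\ref{i: weak}.

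\medskip
\noindent\textbf{Part \ref{i: strong}, lower bound.} Let $H\subseteq G^r(n,p)$ be $K^r_s$-saturated. Adding an $r$-set $e$ and creating a copy of $K^r_s$ forces, writing $e=B\cup\{v\}$ for an $(r-1)$-subset $B$, a ``witness'' vertex $x$ with $B\cup\{x\}\in H$ and, for each $b\in B$, $(B\setminus\{b\})\cup\{v,x\}\in H$ (for general $s$ one needs $s-r$ mutually compatible such $x$'s, but the marginal cost of the last one is still $r-1$ new edges). For a fixed $x$ the requisite edges lie in $G^r(n,p)$ only with probability $p^{r-1}$; hence if the $G$-link of $B$ has the typical size $\approx pn$ but its $H$-link has size $d$, a coupon-collector estimate shows that \textbf{whp} some $v$ in the $G$-link but not the $H$-link fails to be witnessed once $d<(1-\varepsilon)\log_{1/(1-p^{r-1})}n$. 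A union bound over all $(r-1)$-sets $B$ and all small candidate $H$-links then gives $|\{x:B\cup\{x\}\in H\}|\ge(1-o(1))\log_{1/(1-p^{r-1})}n$ for every $B$, i.e.\ $|E(H)|=\tfrac1r\sum_B|\{x:B\cup\{x\}\in H\}|\ge\tfrac1r(1-o(1))\binom{n}{r-1}\log_{1/(1-p^{r-1})}n$. To remove the spurious factor $\tfrac1r$, as in the graph case, I would observe that the witnesses are forced to sit in ``heavy'' links and double count over the $r$ different $(r-1)$-subsets of a typical $r$-edge, recovering the full $(1-o(1))\binom{n}{r-1}\log_{1/(1-p^{r-1})}n$.

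\medskip
\noindent\textbf{Part \ref{i: strong}, upper bound; the main obstacle.} I would generalise the Kor\'andi--Sudakov construction: attach to each $(r-1)$-set a star of size $\Theta\!\big(\log_{1/(1-p^{r-1})}n\big)$ inside $G^r(n,p)$, arranged around a small ``core'' $(r-1)$-uniform structure so that for every $r$-set $e$ the links of its $(r-1)$-subsets share an element completing $e$ to a copy of $K^r_s$; the (sub-$n$-many) $r$-sets where this fails are saturated by a bounded-cost auxiliary gadget, and $K^r_s$-freeness of the core is maintained by building it recursively (the core behaves like $G^r(\Theta(\log n),p)$, whose saturation number is a negligible $o\!\big(\binom{n}{r-1}\log n\big)$). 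The main obstacle of the whole paper is to make this construction simultaneously $K^r_s$-free, contained in $G^r(n,p)$, and of the exact first-order size $(1+o(1))\binom{n}{r-1}\log_{1/(1-p^{r-1})}n$; this is where the quasirandom properties of $G^r(n,p)$ --- uniform $(r-1)$-degrees, uniform sizes of common links, and Hamilton-type connectivity of links --- are used most heavily, and where the bulk of the technical work lies.
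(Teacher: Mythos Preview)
Your proposal misidentifies where the real work lies. You flag the Part~\ref{i: strong} upper bound as ``the main obstacle of the whole paper'', but the paper is explicit that Part~\ref{i: weak} is the hard part; your sketch for Part~\ref{i: strong} is roughly in line with what the paper does (the Kor\'andi--Sudakov three-set construction $A_1,A_2,A_3$ with a Krivelevich-type $K^r_{t+1}$-free subhypergraph in $A_1$), and the genuinely new technical ingredient there --- bounding, via a high-moment computation, the number of $K^r_t$'s through any fixed pair of vertices so as to control the dependencies in Janson --- is something your outline does not flag but could plausibly be filled in.

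For Part~\ref{i: weak} the lower bound you propose is massively overcomplicated. One does not need to analyse the rank of the random sub-collection of Kalai vectors by martingale. The paper observes the trivial transitivity: \textbf{whp} every $e\in\binom{[n]}{r}$ lies in a copy of $K^r_s$ in $G\cup\{e\}$ (a one-line union bound over $n^r$ edges, each failing with probability $e^{-\Theta(n)}$), so $G$ is itself weakly $K^r_s$-saturated in $K^r_n$; hence any $H$ weakly saturated in $G$ is weakly saturated in $K^r_n$, and Theorem~\ref{th: hypergraph} applies directly. Your ``random vectors span the whole space'' is exactly this statement in disguise, and the martingale is unnecessary.

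The genuine gap is your Part~\ref{i: weak} upper bound. Restricting the canonical extremal hypergraph to $G$ gives only about a $p$-fraction of $\binom{n}{r}-\binom{n-s+r}{r}$ edges, so your $H_0$ is well below the lower bound and cannot possibly be weakly saturated; you must add $\Theta(n^{r-1})$ further edges, and ``a small reserve of extra edges \ldots\ seeds the cascade'' is not a construction. The paper's solution is a rather delicate recursive scheme: fix a core $C_0$ of size $s-r$, and for every $S\subseteq V\setminus C_0$ with $1\le|S|\le r-1$ assign a core $C_S$ of size $s-r$, chosen so that along any chain $\varnothing=S_0\subsetneq\cdots\subsetneq S_t\subsetneq S$ the union $S\cup C_S\cup C_{S_0}\cup\cdots\cup C_{S_t}$ induces a clique in $G$ on the edges meeting $C_S$. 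The hypergraph $H$ then consists of edges $S\cup C'$ with $C'\subseteq C_S$ (plus edges routed through $C_0$ for the non-core-definable $S$); a Vandermonde count gives exactly $\binom{n}{r}-\binom{n-s+r}{r}$ edges. Activation of each remaining edge $e$ uses an auxiliary clique $\tilde C(e)$ together with an inductive ``core-extension'' claim that $C_{S\cup U}=C_S$ whenever $U$ sits inside cores higher up the chain. None of this structure is visible in your two-phase outline, and the ``colexicographic order plus near-perfect links'' idea does not provide, for an arbitrary $e$ disjoint from $S$ with $G[S\cup e]\not\cong K^r_s$, any concrete copy of $K^r_s$ through $e$ all of whose other edges are already present.
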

Similarly to the case of $r=2$, for the weak saturation \textbf{whp} the answer is the same as in the complete $r$-uniform hypergraph, whereas for the saturation there is \textbf{whp} an additional $\log n$ factor compared with the complete $r$-uniform hypergraph. Furthermore, the proof of Theorem~\ref{th: wsat}\ref{i: weak} can be extended to $p\ge n^{-\alpha}$, for some appropriately chosen constant $\alpha>0$. In fact, we believe there exists a threshold for the property of the weak saturation \textit{stability}, that is, for when $\wsat(G^r(n,p),K^r_s)=\wsat(K^r_n, K^r_s)$ --- for graphs, the respective result was obtained by Bidgoli, Mohammadian, Tayfeh-Rezaie, and Zhukovskii \cite{BMTZ20} --- though the problem of finding the threshold itself looks very challenging.

The proof of Theorem \ref{th: wsat}\ref{i: strong} follows the ideas appearing in \cite{KS17}, with some adaptations and more careful treatment. Since the key ideas generalise from the graph setting to the hypergraph setting, we present only an outline of the proof in Section \ref{s: strong}. The full proof is presented in the Appendix.

The weak saturation problem for random hypergraphs turns out to be much harder and requires several new ideas. Indeed, for the upper bound of Theorem \ref{th: wsat}\ref{i: weak}, a novel and delicate construction is needed, see Section \ref{s: weak outline} for an outline of the proof.

The paper is structured as follows. In Section \ref{s: weak outline} we demonstrate the key ideas of the proof of Theorem \ref{th: wsat}\ref{i: weak} by giving a detailed sketch for the case where $r=3$. In Section \ref{s: proof} we prove Theorem \ref{th: wsat}\ref{i: weak}. In Section \ref{s: strong}, we provide a detailed sketch of the proof for Theorem \ref{th: wsat}\ref{i: strong}, with the complete proof appearing in the Appendix. Throughout the paper, we will use the shorthand $G\coloneqq G^r(n,p)$ and abbreviate $r$-uniform hypergraph to $r$-graph.

\section{A detailed sketch of the proof of Theorem \ref{th: wsat}\ref{i: weak} for $r=3$}\label{s: weak outline}
Let us begin with the lower bound. To that end, note that \textbf{whp} all the edges of $\binom{[n]}{r}\setminus E(G)$ can be activated from the edges of $G$. Thus, a weakly $K^r_s$-saturated subgraph of $G$ is \textbf{whp} also a weakly $K^r_s$-saturated of $K^r_n$, and therefore \textbf{whp} $\wsat(G,K^r_s)\ge \wsat(K_n^r,K^r_s)$.

The proof of the upper bound follows from a delicate construction. For the sake of clarity of presentation, and since it already illustrates all the main issues one needs to overcome, we consider the case when $r=3$ in this sketch. 

The natural first step for finding a weakly $K_s^r$-saturated subhypergraph in $G$, similar in spirit to the approach taken in the case when the host graph is a complete hypergraph, is to choose a \textit{core} $C_0\subseteq [n]$ of size $s-3$, such that $G[C_0]\cong K^3_{s-3}$. Observe that, in the case of the complete hypergraph, for every edge $e \subseteq [n] \setminus C_0$ and for every $S\subsetneq e$, we have that $K_n^3[C_0 \cup S]\cong K_{|C_0\cup S|}^3$. In this case, we can then set $H$ to be the graph whose edges are of the form $f\subseteq [n]$ with $f\cap C_0\neq \varnothing$. Then, we may activate all the remaining edges $e\subseteq [n]\setminus C_0$, since in $K_n^3$ we have that $K_n^3[C_0\cup e]\cong K_s^3$, and all the edges induced by $C_0\cup e$, except $e$, are in $H$. This immediately gives the desired upper bound of $\wsat(K_n^3,K_s^3)\le \binom{n}{3}-\binom{n-s+3}{3}$.

However, when the host graph is the random graph $G$, we may have edges $e \subseteq [n] \setminus C_0$ and sets $S \subsetneq e$ such that $G[C_0 \cup S]$ is not a clique. Thus, in what follows, for every such problematic set $S$, we will choose an additional core (that is, a special $(s-3)$-subset of $[n]$).

The proof is divided into four main stages. In the first stage, we define appropriate cores for all relevant subsets of vertices $S\subset[n]\setminus C_0$ of size at most two.
The second stage describes the construction of a weakly $K_s^3$-saturated subhypergraph $H$ of $G$, making use of the properties of the cores established in the first stage. In the third stage, we count the number of edges of $H$. The fourth and final stage proves that $H$ is indeed a weakly $K_s^3$-saturated subhypergraph $G$.

\paragraph{Defining cores.} Our cores will be disjoint subsets of size $s-3$. Let us try to give a rough intuition for the purpose of our cores. For every edge $e$, we aim to find a set $C$ of size $s-3$ such that $G[e\cup C]\cong K_s^3$. Further, we want that either $H[e\cup C]$ will form a clique without $e$, and then $e$ can be immediately activated, or that after several steps of activation of other edges, $H[e\cup C]$ together with the previously activated edges will form a clique without $e$. To allow for this activation process, we will not choose cores for the edges themselves (as this will be inefficient and require too many edges in $H$), but rather assign cores to sets of vertices $S$ of size at most two. Each such core allows us to either activate immediately edges containing $S$, or to do so through some `chain' of activation. As each core will contribute additional edges to $H$, we aim to minimise the number of cores. We now turn to choosing the cores.

We begin by choosing $C_0$, where the only requirement on the set $C_0$ is that $G[C_0]\cong K_{s-3}^3$ (\textbf{whp} such a set exists in $G$). We remark that we will use $C_0$ to activate edges $e\subseteq[n]\setminus C_0$ satisfying that $G[e\cup C_0]=K_s^3$.

We now turn to define a core for every singleton $v\in [n]\setminus C_0$ (noting that cores for different vertices may coincide). For every $v\in [n]\setminus C_0$, if $G[C_0\cup\{v\}]\cong K_{s-2}^3$, we choose the core $C_v\coloneqq C_0$. Otherwise, if $G[C_0\cup \{v\}]\ncong K_{s-2}^3$, we choose a core $C_v$, disjoint from $v$, with the following property.
\begin{align}\label{property:C_v}
    \text{Every edge $f\subseteq C_v\cup \{v\}\cup C_0$ with $f\cap C_v\neq \varnothing$ is in $G$.}
\end{align}
Indeed, \textbf{whp} this is possible for every such $v\in [n]\setminus C_0$.

Let us mention two observations.
\begin{enumerate}[(O\arabic*)]
    \item We have that $G[C_0\cup C_v]\cong K^3_{|C_0\cup C_v|}$, and $|C_0\cup C_v|=s-3$ when $C_v=C_0$, and $|C_0\cup C_v|=2(s-3)$ otherwise. Indeed, every $e\subseteq C_0$ is in $G$ since $G[C_0]\cong K^3_{s-3}$, and every remaining edge $e\subseteq C_0\cup C_v$ with $e\cap C_v\neq \varnothing$ is in $G$ by Property \eqref{property:C_v}. \label{observation:1}
    \item If $u\in C_v$ for some $v$, we have that $G[C_0\cup\{u\}]\cong K_{s-2}^3$, and therefore $C_u=C_0$. \label{observation:2}
\end{enumerate}
We remark that in the final activation step of the argument, for every $v\in [n]$, using the edges activated through $C_0$, we will be able to activate edges of the form $e=\{v, x,y\}\subseteq [n]\setminus C_v$ where $G[e\cup C_v]\cong K_s^3$. Crucially, note that for such edges $e$ (when $C_v\neq C_0$), we have $G[e\cup C_0]\ncong K_s^3$, and thus choosing an additional core was indeed necessary. 

Still, there will be edges $e=\{v_1,v_2,v_3\}$, where for every $i\in [3]$, $G[e\cup C_{v_i}]\ncong K_s^3$. We thus turn to defining cores for pairs of vertices.  We will define cores for all pairs of vertices $S=\{v_1,v_2\}$ which satisfy that $v_1\notin C_{v_2}$ and $v_2\notin C_{v_1}$ (the reason for defining cores for such vertices will become clearer in the last two steps of the argument: when counting the number of edges in $H$ and when considering the activation of edges). We will ensure that $C_S$ satisfies the following property.
\begin{align}\label{property:C_S}
    \text{Every edge $f \subseteq S \cup C_0 \cup C_{v_i} \cup C_S$ with $f \cap C_S \neq \varnothing$ is in $G$, for every $i \in \{1, 2\}$.}    
\end{align}
In all the cases when one of the sets $C_0,C_{v_1},C_{v_2}$ can be chosen for the role of $C_S$ so that Property \eqref{property:C_S} is satisfied, we will indeed set $C_S$ to be equal to such a core. However, in other cases, we will have to assign a new core for $S$. Let us now describe in detail how we define the core $C_S$.

If $G[C_0 \cup \{v_1, v_2\}] \cong K^3_{|C_0\cup\{v_1,v_2\}|}$, then we set $C_{S} = C_0$. Assume now that it is not the case. If for some $i\in\{1,2\}$, every edge $f\subseteq C_{v_i}\cup S\cup C_0$ with $f\cap C_{v_i}\neq \varnothing$ is in $G$, then we set $C_S=C_{v_i}$ (since verifying that \eqref{property:C_S} holds for such a choice of $C_S$ is somewhat technical, we omit the explanation here, and refer to the complete proof in Section \ref{s: proof}). Otherwise, a new choice for the core $C_S$ should be made to later activate edges containing $S$. \textbf{Whp} for any such $S$ we may find a core $C_S$, disjoint from $S$, which satisfies Property \eqref{property:C_S}. 

Let us mention a few observations. Below, we fix $S=\{v_1,v_2\}$.
\begin{enumerate}[(O\arabic*)]\setcounter{enumi}{2}
    \item For every $i\in \{1,2\}$ and $c\in C_S$, we have that $C_{\{v_i,c\}}=C_{v_i}$. Indeed, we need to show that every edge $f\subseteq C_{v_i}\cup \{v_i,c\}\cup C_0$ with $f\cap C_{v_i}\neq \varnothing$ is in $G$. Now, if $c\in f$, then $f\cap C_S\neq \varnothing$ and thus, by Property \eqref{property:C_S}, we have that $f$ is in $G$. Otherwise, $f\subseteq C_{v_i}\cup\{v_i\}\cup C_0$ (and intersects with $C_{v_i}$), and thus, by Property \eqref{property:C_v}, we have that $f$ is in $G$. \label{observation: 3}
    \item For every $i\in \{1,2\}$, we have that $G[C_0\cup C_{v_i}\cup C_{S}]\cong K^3_{|C_0\cup C_{v_i}\cup C_S|}$. Indeed, by Observation \ref{observation:1} we have that $G[C_0\cup C_{v_i}]\cong K^3_{|C_0\cup C_{v_i}|}$, and by Property \eqref{property:C_S}, every edge $f\subseteq C_0\cup C_{v_i}\cup C_S$ with $f\cap C_S\neq \varnothing$ is in $G$. \label{observation: 4}
    \item If $X\subseteq [n]\setminus C_0$ satisfies that $G[C_0\cup X]\cong K^3_{|C_0\cup X|}$, then $C_S=C_0$ for every $S\subseteq X$ of size at most two. In particular, if $Y\subseteq C_{v_i}\cup C_S$ with $|Y|\le 2$, by \ref{observation: 4} we have that $G[C_0\cup Y]\cong K^3_{|C_0\cup Y|}$ and therefore $C_{Y}=C_0$. \label{observation: 5}
\end{enumerate}

\paragraph{Constructing $H$.} Let $H\subseteq G$ be a subhypergraph on $V(G)=[n]$, consisting of the following edges:
\begin{enumerate}[(\arabic*)]
    \item Every edge $e\subseteq C_0$ is in $H$.\label{step c0}
    \item For every $v\in [n]\setminus C_0$, we add to $H$ all edges of the form $\{v\}\cup C'$ for every $C'\subseteq C_v$ of size two. These edges are in $G$ by Property \eqref{property:C_v}. \label{step v and cv}
    \item For every $v\in [n]\setminus C_0$, we add to $H$ all edges of the form $\{v,c_0,c\}$ for every $c_0\in C_0$ and $c\in C_v$. These edges are in $G$ by Property \eqref{property:C_v}. \label{step v, c0 and cv}
    \item For every $S=\{v_1, v_2\}\subseteq [n]$ for which $C_S$ is defined, we add to $H$ all edges of the form $S\cup \{c\}$ for every $c\in C_S$. These edges are in $G$ by Property \eqref{property:C_S}.\label{step v1,v2 and cs}
\end{enumerate}
While Steps \ref{step c0} and \ref{step v and cv} are rather transparent, we refer to Figure \ref{f: sat_core_edges} for an illustration of edges added to $H$ at Steps \ref{step v, c0 and cv} and \ref{step v1,v2 and cs}.
\begin{figure}[H]
\centering
\includegraphics[width=0.4\textwidth]{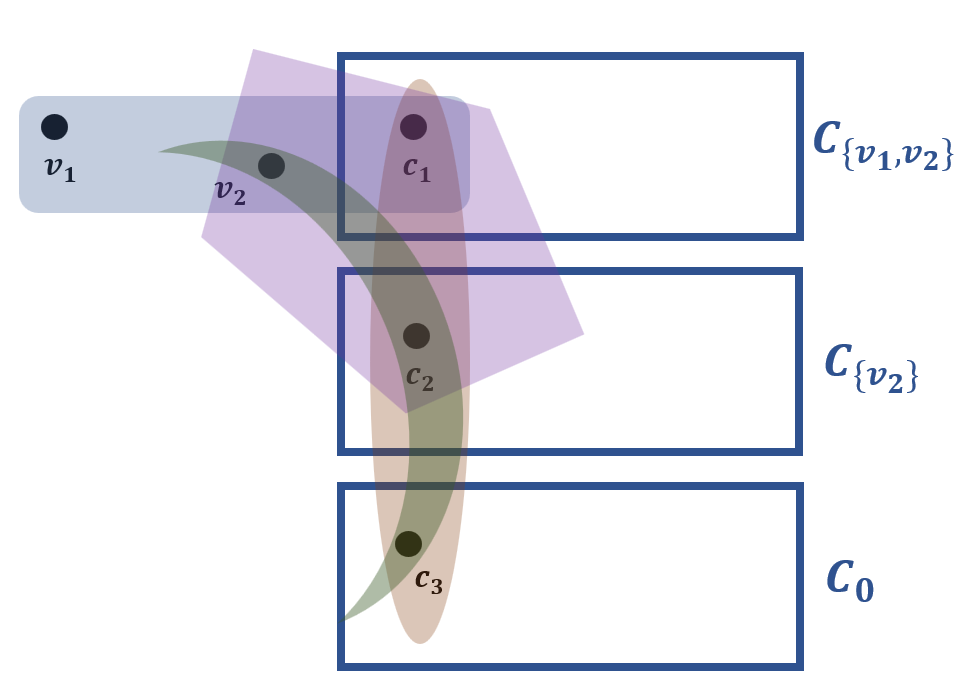}
\caption{Illustration of a \textit{chain} of cores, $C_0$, $C_{v_2}$, and $C_{\{v_1,v_2\}}$, together with edges that were added to $H$. The edge $\{v_1, v_2, c_1\}$ is added to $H$ by \ref{step v1,v2 and cs}. The edge $\{v_2, c_1, c_2\}$ is added to $H$ by \ref{step v1,v2 and cs}, noting that $C_{\{v_2, c_1\}}=C_{v_2}$ by \ref{observation: 3}. The edge $\{c_1, c_2, c_3\}$ is added to $H$ by \ref{step v1,v2 and cs} since $C_{\{c_1, c_2\}}=C_0$ by \ref{observation: 5}. Finally, the edge $\{v_2, c_2, c_3\}$ is added to $H$ by \ref{step v, c0 and cv}.} 
\label{f: sat_core_edges}
\end{figure}

\paragraph{Number of edges in $H$.} The number of edges we have added to $H$ is at most $$\binom{s-3}{3}+\binom{n-s+3}{1}\binom{s-3}{2}+\binom{n-s+3}{2}\binom{s-3}{1}=\binom{n}{3}-\binom{n-s+3}{3}.$$ 
Indeed, the first term comes from the edges induced by $C_0$, which we have added to $H$ at Step~\ref{step c0}.

The second term comes from the edges which were added at Step \ref{step v and cv}. For every $v\in [n]\setminus C_0$, we have chosen $C_v$ (possibly $C_v=C_0$), $|C_v|=s-3$, and added all edges of the form $\{v\}\cup C'$ where $C'\subseteq C_v$ with $|C'|=2$. Thus, we have $\binom{n-s+3}{1}$ choices for $v \in [n] \setminus C_0$ and $\binom{s-3}{2}$ choices for $C' \subseteq C_v$ of size two.

Finally, the third term comes from the edges which were added at Step~\ref{step v, c0 and cv} and at Step~\ref{step v1,v2 and cs}. To see this, consider a pair of vertices $S = \{u, v\} \subseteq [n] \setminus C_0$. Recall that we define $C_S$ only if $v\notin C_{u}$ and $u\notin C_{v}$. Thus, if $C_S$ is not defined, then WLOG $v \in C_{u}$. In this case, since $v \notin C_0$, we must have $C_u \cap C_0 = \varnothing$ (recall that the cores are either identical or vertex-disjoint). In Step~\ref{step v, c0 and cv}, we add all the edges $\{v, u, c_0\}$ for each of the $\binom{s-3}{1}$ possibilities for $c_0 \in C_0$ (note here, that when considering all pairs $\{u,v\}$ in this manner, we in fact cover all edges added at Step \ref{step v, c0 and cv}). Otherwise, $C_S$ is defined, and at Step~\ref{step v1,v2 and cs} we add all edges $\{u, v, c\}$ for each of the $\binom{s-3}{1}$ possibilities for $c \in C_S$.

\paragraph{Activating the edges in $E(G)\setminus E(H)$.} 

First, note that we can activate all edges $e\subseteq [n]\setminus C_0$, such that $G[C_0\cup e]\cong K_s^3$. Indeed, by Observation \ref{observation: 5}, for every non-empty $S \subsetneq e$ we have $C_S = C_0$. Thus, by Steps \ref{step c0}, \ref{step v and cv}, and \ref{step v1,v2 and cs}, all edges induced by $C_0\cup \{e\}$, except $e$, are in $H$. We can thus activate $e$, which closes a copy of $K_s^3$ with $C_0$. See Figure \ref{f: sat_C_0} for illustration.
\begin{figure}[H]
\centering
\includegraphics[width=0.25\textwidth]{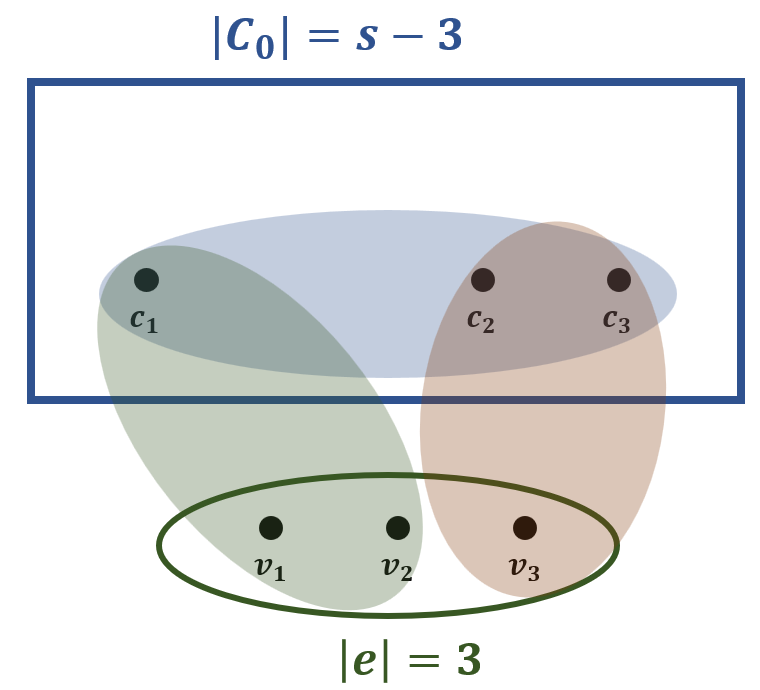}
\caption{The edge $e$ closes a copy with $C_0$ and can thus be activated. The three types of edges that are in $H$ appear in shaded colours. The edges induced by $C_0$ appear in blue, and were added to $H$ at Step \ref{step c0}. The edges that contain one vertex from $e$ and two vertices from $C_0$ appear in red, and were added to $H$ at Step \ref{step v and cv}. The edges that contain two vertices from $e$ and one vertex from $C_0$ appear in green, and were added to $H$ at Step \ref{step v1,v2 and cs}.}
\label{f: sat_C_0}
\end{figure}

We can now activate the remaining edges $e=\{v_1, v_2, v_3\}\subseteq [n]\setminus C_0$. To that end, \textbf{whp} we can choose an \textit{auxiliary clique} $\Tilde{C}\coloneqq \tilde{C}(e)$ disjoint with $e\cup \bigcup_{i,j\in [3]}(C_{v_i}\cup C_{\{v_i,v_j\}})$ (in fact, the union is over $i,j\in [3]$ for which such cores are defined), such that $G[\tilde{C}\cup e]\cong K^3_s$ and for every $i\neq j \in \{1,2,3\}$, every edge $f\subseteq \tilde{C} \cup \{v_i, v_j\}\cup C_{\{v_i, v_j\}}\cup C_{v_i}\cup C_0$ with $f\cap \tilde{C}\neq\varnothing$ is in $G$. We will utilise the following observation. 
\begin{enumerate}[(O\arabic*)]\setcounter{enumi}{5}
    \item For every $c\in \tilde{C}$ and every $i\in [3]$, we have that $c\notin C_{v_i}$ and thus $C_{\{v_i,c\}}$ is defined. Further, $C_{\{v_i,c\}}=C_{v_i}$. To show that, we need to argue that every edge $f\subseteq C_{v_i}\cup \{v_i,c\}\cup C_0$ which intersects with $C_{v_i}$ is in $G$. Indeed, if $c\in f$, then this follows from our choice of $\tilde{C}$, and otherwise, this follows from \eqref{property:C_v}.\label{observation: 6}
\end{enumerate}

Our goal is to show that all edges induced by $e\cup \tilde{C}$, except for $e$, are either in $H$ or can be activated. We will then have that $e$ closes a copy of $K^3_s$ with $\tilde{C}$, and can thus be activated as well. We do so in several activation steps, where some activation steps may depend on previous activation steps (see also Figure \ref{f: sat_edge_chain} for illustration). In what follows, we always assume $i,j\in [3], i\neq j$.
\begin{enumerate}[({A}\arabic*)]
    \item \textbf{Edges $f$ induced by $\tilde{C}\cup C_{\{v_i, v_j\}}\cup C_{v_i}$.} \label{activation 1}

    We claim that such $f$ closes a copy of $K^3_s$ with $C_0$ and can thus be activated. By the choice of $\tilde{C}$ and Observation \ref{observation: 4}, $$G\left[C_0\cup \left(\tilde{C}\cup C_{\{v_i, v_j\}}\cup C_{v_i}\right)\right]\cong K^3_{\left|C_0\cup \left(\tilde{C}\cup C_{\{v_i, v_j\}}\cup C_{v_i}\right)\right|}.$$ Now, consider an edge $f'=\{c_0,x,y\}\subseteq C_0\cup \left(\tilde{C}\cup C_{\{v_i, v_j\}}\cup C_{v_i}\right)$ with $c_0\in C_0$ and $f'\nsubseteq C_0$. We have that, in particular, $G[\{x,y\}\cup C_0]\cong K_{|\{x,y\}\cup C_0|}^3$, and thus $C_{\{x,y\}}=C_0$ by \ref{observation: 5}. Hence, we added $f'$ to $H$ at Step \ref{step v1,v2 and cs} (where $\{x,y\}$ play the role of $S$). Therefore, we can now activate any edge $f\subseteq \tilde{C}\cup C_{\{v_i, v_j\}}\cup C_{v_i}$ since it closes a copy of $K^3_s$ with $C_0$ (together with the edges in $H$).
    \item \textbf{Edges of the form $f=\{v, c_1, c_2\}$ where $v\in \{v_i,v_j\}, c_1,c_2\in C_S\cup \tilde{C}$.}\label{activation 3}

    We claim that $f$ closes a copy of $K^3_s$ with $C_v$ and can thus be activated. Set $S=\{v_i,v_j\}$. We have already activated edges induced by $C_S\cup C_{v}\cup \tilde{C}$ during \ref{activation 1}. In Step \ref{step v and cv}, we added to $H$ all edges of the form $\{v, x_1,x_2\}$ where $x_1,x_2\in C_v$. As for edges $\{v, c, x\}$ where $c\in \{c_1,c_2\}$ and $x\in C_v$, note that by \ref{observation: 3} and \ref{observation: 6}, $C_{\{v,c\}}=C_v$. Thus, $\{v,c,x\}$ is in $H$ by Step \ref{step v1,v2 and cs}. Thus, $f$ closes a copy of $K_s^3$ with $C_v$ (together with edges of $H$ and edges activated during \ref{activation 1}) and may thus be activated. See the left-hand side of Figure~\ref{f: sat_edge_chain} for illustration.
    \item \textbf{Edges of the form $f=\{v_i,v_j,c\}$ where $c\in \tilde{C}$.}\label{activation 4}

    Set $S=\{v_i,v_j\}$. We claim that $f$ closes a copy of $K^3_s$ with $C_S$ and can thus be activated. Edges of the form $\{v_i,v_j,x\}$ where $x\in C_S$ have been added to $H$ at Step \ref{step v1,v2 and cs}. Edges induced by $\{c\}\cup C_S$ were activated in \ref{activation 1}. Edges of the form $\{v, x_1,x_2\}$ where $v\in S$ and $x_1,x_2\in C_S$ were activated in \ref{activation 3}. Furthermore, edges of the form $\{v, c, x\}$ where $v\in S$ and $x\in C_S$ were activated in \ref{activation 3}. Thus, $f$ closes a copy of $K_s^3$ with $C_S$ (together with edges of $H$ and edges activated during \ref{activation 1} and \ref{activation 3}), and may thus be activated. See the right-hand side of Figure~\ref{f: sat_edge_chain} for illustration.
\end{enumerate}
Thus, $e$ closes a copy of $K^3_s$ together with $\tilde{C}$ (with the edges of $H$ and edges activated during \ref{activation 1}-\ref{activation 4}).
\begin{figure}[H]
\centering
\includegraphics[width=0.8\textwidth]{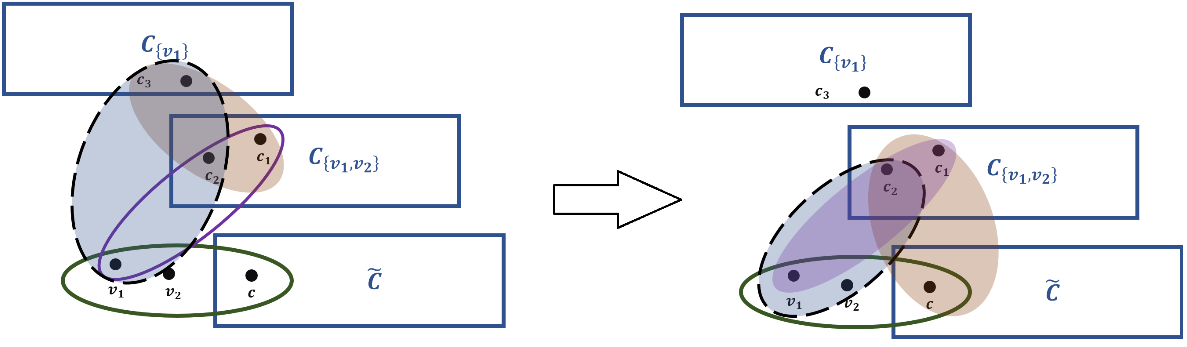}
\caption{Illustration of a \textit{chain} of activation, with the complexity of the construction evident already when $r=3$. Towards activating an edge $\{v_1, v_2, v_3\}$ with $\Tilde{C}$, we need to activate edges of the form $\{v_1, v_2, c\}$ where $c\in \tilde{C}$. To that end, we first activate all edges that form a clique with $C_0$, and in particular, all edges induced by $C_{v_1}\cup C_{\{v_1,v_2\}}\cup \tilde C$. Then, as the left side illustrates, we can activate the edges of the form $\{v_1, c_1, c_2\}$ as they form a clique with $C_{v_1}$. Indeed, the edge $\{c_1, c_2, c_3\}$ forms a clique with $C_0$, and the edge $\{v_1, c_2, c_3\}$ is in $H$ since $C_{\{v_1, c_2\}}=C_{v_1}$ (and was added at Step~\ref{step v1,v2 and cs}). We can then, as the right side illustrates, turn our attention to edges of the form $\{v_1, v_2, c\}$, which will close a clique with $C_{\{v_1, v_2\}}$. Here, the edge $\{c, c_1, c_2\}$ closes a clique with $C_0$ and thus has already been activated, and the edge $\{v_1, v_2, c_2\}$ is in $H$ by Step~\ref{step v1,v2 and cs}.}
\label{f: sat_edge_chain}
\end{figure}

Finally, we are left with edges $e=\{v_1,v_2,v_3\}$ which intersect with $C_0$. The argument for these edges is similar to before, and we omit the details here, referring to the complete proof present in Section \ref{s: proof}.

\section{Weak saturation}\label{s: proof}
Let $r \ge 3$ and $s \ge r + 1$ be integers. Let $p \in (0, 1]$ be a constant. Recall that $G \coloneqq G^r(n,p)$. Set $\ell\coloneqq s-r$.

We start we the lower bound, which is the easiest part of the paper, and follows from the fact that \textbf{whp} $G$ is weakly saturated in $K_n$
\begin{claim}
    \textbf{Whp} $\wsat\left(G, K_s^r\right) \ge \wsat(K_n^r,K_s^r)$.
\end{claim}
\begin{proof}
    Let us first show that \textbf{whp} $G$ is weakly saturated in $K^r_n$. To that end, fix some edge $e\in K^r_n \setminus G$. The probability that for a fixed set $S$ of size $\ell$ we have that $e\cup G[S\cup e]\not\cong K_s^r$ is $1-p^{\binom{s}{r}-1}$. This event is independent for every two disjoint sets of size $\ell$, and thus the probability that $e$ does not form a copy of $K^r_s$ in $G\cup e$ is at most $\left(1-p^{\binom{s}{r}-1}\right)^{\lfloor\frac{n}{\ell}\rfloor}=\exp\left(-\Theta(n)\right)$. The union bound over $\binom{n}{r}\le n^r$ edges $e\in K^r_n$ shows that \textbf{whp} every edge $e\in K^r_n$ belongs to a copy of $K^r_s$ in $G\cup e$, and therefore \textbf{whp} $G$ is weakly saturated in $K^r_n$.

    Now, if $H$ is weakly saturated in $G$, then we can add to $H$ edges one-by-one until we obtain $G$, and then keep adding edges until we reach $K^r_n$. Thus, \textbf{whp} any such $H$ is weakly saturated in $K_n^r$ as well. But then, $\wsat\left(G, K_s^r\right) \ge \wsat(K_n^r,K_s^r)$.
\end{proof}

The rest of this section is devoted to the proof of the upper bound. In Section \ref{s: constructing H}, we lay the groundwork for our proof: define the cores, construct the weakly saturated subhypergraph $H$, and count its edges. In Section \ref{s: edge activation}, we show that there exists an ordering of the edges under which we can activate all edges of $G$ that are not in $H$.

\subsection{Laying the groundwork}\label{s: constructing H}
Before we delve into the fine details, we note that what follows will be a natural extension of the construction in Section \ref{s: weak outline} to general $r$ (together with their formalisation). We will once again define a core $C_0$, and inductively define cores for sets $S\subseteq V(G)\setminus C_0$ with size $1\le |S| \le r-1$. The construction of the cores will naturally have a \textit{chain} property, that is, the core of $S$ may (and will) depend on the cores of $S'\subsetneq S$. Once again, there will be several sets $S$ for which we will not define a core, and instead draw relevant edges with vertices from $C_0$.

We begin by partitioning $V(G)$ into $\lfloor\frac{n}{\ell}\rfloor$ sets of size $\ell$, denoted by $Q_1, \ldots, Q_{\lfloor\frac{n}{\ell}\rfloor}$. Let us further write $\mathcal{Q}\coloneqq\{Q_1, \ldots, Q_{\lfloor\frac{n}{\ell}\rfloor}\}$. Throughout the proof, we will use the following probabilistic lemma.
\begin{lemma}\label{l: prob existence}
Let $k\ge 0$ be a constant. Then, \textbf{whp}, for every $S\subseteq V(G)$ with $|S|=k$, there exists $Q_i$, with $1\le i \le \lfloor\frac{n}{\ell}\rfloor$, such that $S\cap Q_i=\varnothing$ and every edge $e\subseteq Q_i\cup S$ with $e\cap Q_i\neq \varnothing$ is in $G$.
\end{lemma}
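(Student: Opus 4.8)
The plan is to fix a set $S$ of size $k$, and for each block $Q_i$ disjoint from $S$, estimate the probability that every edge $e\subseteq Q_i\cup S$ meeting $Q_i$ is present in $G$. Since $|Q_i|=\ell$ and $|S|=k$ are both constants, the number of such edges $e$ is a constant, say $m=m(k)\le\binom{k+\ell}{r}$, and each is present independently with probability $p$; hence the probability that $Q_i$ is ``good for $S$'' is exactly $p^{m}$, a positive constant bounded below by some $c=c(k)>0$ uniformly in $S$. The events ``$Q_i$ is good for $S$'' for the (at least) $\lfloor n/\ell\rfloor - k$ blocks $Q_i$ disjoint from $S$ involve pairwise disjoint sets of potential edges (an edge meeting $Q_i$ and an edge meeting $Q_j$ with $i\ne j$ are distinct edges, since $Q_i\cap Q_j=\varnothing$), so these events are mutually independent. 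Therefore the probability that \emph{no} block is good for $S$ is at most $(1-c)^{\lfloor n/\ell\rfloor - k} = \exp(-\Theta(n))$.

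Next I would take a union bound over all sets $S$ of size $k$. There are $\binom{n}{k}\le n^{k}$ such sets, and $k$ is a constant, so $n^{k}\cdot\exp(-\Theta(n)) = \exp(-\Theta(n)) \to 0$. Hence \textbf{whp} every $S$ of size $k$ admits a good block $Q_i$, which is exactly the statement.

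The argument is essentially routine; the only point requiring a moment's care is the independence claim, i.e. that for a \emph{fixed} $S$ the family of events $\{Q_i \text{ is good for } S\}$ over $i$ with $Q_i\cap S=\varnothing$ is independent. This holds because event $i$ is determined by the presence/absence of the edges contained in $Q_i\cup S$ that intersect $Q_i$, and for $i\ne j$ no such edge is counted in both families: an edge $e$ with $e\cap Q_i\ne\varnothing$ and $e\cap Q_j\ne\varnothing$ would have to lie in $Q_i\cup S$ and in $Q_j\cup S$ simultaneously, forcing its vertices outside $S$ to lie in $Q_i\cap Q_j=\varnothing$, a contradiction. So each edge of $K^r_n$ is examined by at most one of the events, giving genuine independence (not merely pairwise), and the product bound $(1-c)^{\lfloor n/\ell\rfloor-k}$ is valid. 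I do not expect any real obstacle here — the lemma is a concentration-type statement with constant-probability independent trials, and the $\exp(-\Theta(n))$ bound comfortably absorbs the polynomial union bound. (For the degenerate case $k=0$ one simply needs one block $Q_i$ with $G[Q_i]\cong K^r_\ell$, which follows from the same computation.)
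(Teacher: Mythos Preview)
Your proposal is correct and follows essentially the same argument as the paper: fix $S$, note that each disjoint block $Q_i$ is ``good'' with constant probability at least $p^{\binom{k+\ell}{r}}$, use independence across the $\lfloor n/\ell\rfloor - k$ disjoint blocks to get an $\exp(-\Theta(n))$ failure probability, then union bound over the $\binom{n}{k}$ choices of $S$. The only difference is that you spell out the independence justification (disjoint edge-sets across blocks), which the paper leaves implicit.
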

\begin{proof}
Fix $S$ and fix $i$ such that $Q_i\cap S=\varnothing$. The probability that every edge $e\subseteq Q_i\cup S$ with $e\cap Q_i\neq \varnothing$ is in $G$ is at least $p^{\binom{\ell+k}{r}}$. There are at least $\lfloor\frac{n}{\ell}\rfloor-k$ different $i$ such that $Q_i\cap S=\varnothing$. Therefore, the probability there doesn't exist such a $Q_i$ is at most 
\begin{align*}
    \left(1-p^{\binom{\ell+k}{r}}\right)^{\lfloor\frac{n}{\ell}\rfloor-k}=\exp\left(-\Theta(n)\right),
\end{align*}
where we used our assumptions that $k, r, \ell$, and $p$ are constants. There are $\binom{n}{k}$ ways to choose $S$, and thus by the union bound, the probability of violating the statement of the lemma is at most
\begin{align*}
    \binom{n}{k}\exp\left(-\Theta(n)\right)=o(1),
\end{align*}
as required.
\end{proof}

\paragraph{Defining the cores.} Let $i_0$ be the first index in $\left[\lfloor\frac{n}{\ell}\rfloor\right]$ such that $G[Q_{i_0}]\cong K^r_{\ell}$, and let us set $C_0\coloneqq Q_{i_0}$ (note that by Lemma \ref{l: prob existence} \textbf{whp} such an $i_0$ exists). For every $j>0$, let $i_j$ be the $j$-th index in $\left[\lfloor\frac{n}{\ell}\rfloor\right]$ for which we have that $G[C_0\cup Q_{i_j}]\cong K^r_{2\ell}$ (that is, the $j$-th occurrence of some $Q\in \mathcal{Q}$ for which it holds). We then set $C_j\coloneqq Q_{i_j}$ (once again, note that since $G[C_0]\cong K^r_{\ell}$, by Lemma \ref{l: prob existence} \textbf{whp} such a $C_j$ exists). We call these sets \textit{cores}, and we enumerate them $C_0, C_1,\ldots, C_m$. Note that for every $i\neq j\in [m]$, $C_i\neq C_j$, as they are two different $Q\neq Q'\in \mathcal{Q}$. We continue assuming these cores have been defined deterministically.

\paragraph{Assigning cores to sets.} Set $C_{\varnothing}=C_0$. For every vertex $v \in V \setminus C_0$, let $i(v)$ be the first index such that the following holds.
\begin{enumerate}
    \item Every edge $e \subseteq \{v\} \cup C_0 \cup C_{i(v)}$ with $e \cap C_{i(v)} \neq \varnothing$ is in $G$.
    \item $\{v\} \cap C_{i(v)} = \varnothing$.
\end{enumerate}
We then set $C_{\{v\}} = C_{i(v)}$. Note that by the properties of the cores and by Lemma \ref{l: prob existence}, \textbf{whp} such a $C_{\{v\}}$ exists for every $v\in V(G)\setminus C_0$. Furthermore, observe that the properties of $C_{\{v\}}$ imply that $G[\{v\}\cup C_{\{v\}}]\cong K^r_{\ell+1}$.

Now, we define cores for suitable subsets outside of $C_0$ by induction on their size. For every $j\in [2, r-1]$ and for every $S \subseteq V(G) \setminus C_0$ of size $j$, if
\begin{align}\label{core-definable}
    \text{there is no } S'\subsetneq S, \text{ such that } C_{S'} \text{ is defined and } S\setminus S'\subseteq C_{S'}, \tag{$S$ is core-definable}
\end{align}
we define $C_S$ in the following way. Let $i(S)$ be the first index in $[m]$ such that the following holds for every integer $t\in [j-1]$ and a sequence $\varnothing=S_0 \subsetneq \dots \subsetneq S_t \subsetneq S$ for which $C_{S_0}, \ldots, C_{S_t}$ are defined.
\begin{enumerate}[(P\arabic*)]
\item Every edge $e \subseteq S \cup C_{i(S)} \cup C_{S_0} \cup \dots \cup C_{S_t}$ with $e \cap C_{i(S)} \neq \varnothing$ is in $G$. \label{p: edges intersecting C_S}
\item $G\left[C_{i(S)} \cup C_{S_0} \cup \dots \cup C_{S_t} \right]\cong K^r_{|C_{i(S)} \cup C_{S_0} \cup \dots \cup C_{S_t}|}$.
\label{p: small S_0 and cores are clique}
    \item $S \cap C_{i(S)} = \varnothing$. \label{p: no intersection}
\end{enumerate}
We then set $C_S = C_{i(S)}$. Note that we allow $S_0=\varnothing$ in order to include edges intersecting with $C_{\varnothing}=C_0$.

Note that the condition in Property \ref{p: edges intersecting C_S} applies only to edges intersecting with $C_{i(S)}$. Furthermore, by induction, we have that $G\left[S_0 \cup C_{S_0} \cup \dots \cup C_{S_t} \right]\cong K^r_{|S_0 \cup C_{S_0} \cup \dots \cup C_{S_t}|}$, and thus the condition in Property \ref{p: small S_0 and cores are clique} also concerns only edges intersecting with $C_{i(S)}$. Thus, by these properties and by Lemma \ref{l: prob existence}, \textbf{whp} for every $S\subseteq V(G)\setminus C_0$ of size $j$ such that there is no $S'\subsetneq S$ with $C_{S'}$ defined and $S\setminus S'\subseteq C_{S'}$, we can find such a $C_S$. In what follows, we assume this holds deterministically.

Note that we have defined a core $C_{S}$ for every set $S\subseteq V(G)\setminus C_0$ of size at most $r-1$ which is \hyperref[core-definable]{core-definable}. Furthermore, if $C_{S}$ is not defined, then there exists $S'\subseteq S$ for which $C_{S'}$ is defined and $S\setminus S'\subseteq C_{S'}$. Finally, note that
\begin{align}\label{align:i(S') le i(S)}
    i(S') \le i(S) \quad \text{for every $S' \subseteq S$ such that $C_{S'}$ and $C_S$ are defined.}
\end{align}
Indeed, the Properties \ref{p: edges intersecting C_S} through \ref{p: no intersection} are closed under inclusion, and thus any index satisfying the properties for $S$ must already satisfy the properties for $S'$.

\paragraph{Constructing $H$.} Let $H \subseteq G$ be a subhypergraph consisting of the following edges:
\begin{enumerate}[label=(E\arabic*)]
    \item\label{edges-from-C_0} Every edge $e \subseteq C_0$ is in $H$.
    \item\label{edges-from-C_S} For every $\varnothing\neq S \subseteq V(G) \setminus C_0$ for which $C_S$ is defined, we add to $H$ all edges of the form $S \cup C'$ for every $C' \subseteq C_S$ of size $r - |S|$.
    \item\label{edges-from-C_0-and-C_S} For every $\varnothing\neq S \subseteq V(G) \setminus C_0$ for which $C_S$ is defined, we add to $H$ all edges of the form $S \cup C' \cup C'_0$ for every $\varnothing\neq C' \subseteq C_S$ and $\varnothing\neq C'_0 \subseteq C_0$ satisfying $|C' \cup C'_0| = r - |S|$.
\end{enumerate}
Note that edges of type \ref{edges-from-C_0} are in $G$ by the definition of the cores, and edges of type \ref{edges-from-C_S} and type \ref{edges-from-C_0-and-C_S} are in $G$ by Property \ref{p: edges intersecting C_S} (and the first property when defining cores for singletons).

\paragraph{Number of edges in $H$.} Let us bound from above the number of edges added in each step of the construction. 

First, at Step \ref{edges-from-C_0}, we add $\binom{\ell}{r}$ edges to $H$ induced by $C_0$. We now turn to Steps \ref{edges-from-C_S} and \ref{edges-from-C_0-and-C_S}. Let us set
\begin{align*}
    A &= \left\{S \colon \varnothing\neq S \subseteq V(G) \setminus C_0 \wedge C_S \textit{ is defined}\right\} \quad \text{and} \\
    B &= \left\{S \cup C' \colon \varnothing \neq S \subseteq V(G) \setminus C_0 \wedge C_S \textit{ is defined} \wedge C' \subseteq C_S\right\}.
\end{align*}
We have the following.
\begin{itemize}
    \item In Step \ref{edges-from-C_S}, we add the edges $S \cup C'$ for every $S \in A$ and $C' \subseteq C_S$ of size $r - |S|$.
    \item In Step \ref{edges-from-C_0-and-C_S}, we add the edges $X \cup C'_0$ for every $X \in B$ and $C'_0 \subseteq C_0$ of size $r - |X|$.
\end{itemize}

Thus, the number of edges considered at Step \ref{edges-from-C_S} is at most
\[
    \sum_{S \in A} \binom{\ell}{r-|S|} = \sum_{\substack{S\subseteq V(G)\setminus C_0\\ C_S\text{ is defined}}}\binom{\ell}{r-|S|}.
\]
Note that, for every $S \subseteq V(G) \setminus C_0$ such that $C_S$ is defined and for every $\varnothing \neq C' \subseteq C_S$, we have that $C_{S \cup C'}$ is not defined. Indeed, in that case, we have that $S \subseteq S \cup C'$, $C_{S}$ is defined, and $(S \cup C') \setminus S \subseteq C_S$. Thus, by definition, $S \cup C'$ is not \hyperref[core-definable]{core-definable}. Hence, we have that
\[
    B \subseteq \{X\subseteq V(G)\setminus C_0 \colon C_X\text{ is not defined}\}.
\]
Moreover, the number of added edges at Step \ref{edges-from-C_0-and-C_S} is at most
\[
    \sum_{\substack{X\subseteq V(G)\setminus C_0\\ C_X\text{ is not defined}}}\binom{\ell}{r-|X|}.
\]

Therefore, the number of edges in $H$ is at most
\begin{align*}
     \binom{\ell}{r}+\sum_{i=1}^{r-1}\left(\sum_{\substack{S\subseteq V(G)\setminus C_0, \quad |S|=i\\ C_S\text{ is defined}}}\binom{\ell}{r-i}+\sum_{\substack{X\subseteq V(G)\setminus C_0, \quad|X|=i\\ C_X\text{ is not defined}}}\binom{\ell}{r-i}\right)\\
     =\binom{\ell}{r}+\sum_{i=1}^{r-1}\binom{n-\ell}{i}\binom{\ell}{r-i}=\binom{n}{r}-\binom{n-\ell}{r}.
\end{align*}

\subsection{Activating the remaining edges}\label{s: edge activation}
The argument for activating the remaining edges will be a natural extension of the argument given for the case $r=3$ in Section \ref{s: weak outline}, together with its formalisation.

\paragraph{Assigning an auxiliary core to every edge.} Let us fix an edge $e\in E(G)$. Note that we only assign cores to sets of size at most $r-1$, that is, there is no $C_e$ defined. By Lemma \ref{l: prob existence} applied with $$S=e\cup \bigcup_{\substack{A\subsetneq e\\ A\text{ is \hyperref[core-definable]{core-definable}}}}C_A,$$ we can choose an auxiliary core $\tilde{C}\coloneqq \tilde{C}(e)$ from $Q_1,\ldots, Q_{\lfloor\frac{n}{\ell}\rfloor}$, such that the following holds. For every $S\subsetneq e$, every $t\in [0,|S|-1]$, and every $S_0\subsetneq \cdots\subsetneq S_t\subsetneq S$ such that $C_S$ is defined and $C_{S_i}$ is defined for every $i\in[0,t]$:
\begin{enumerate}[($\overline{P}$\arabic*)]
    \item Every edge $f \subseteq S \cup C_S \cup C_{S_0} \cup \cdots \cup C_{S_t}\cup \tilde{C}$ with $f \cap \tilde{C} \neq \varnothing$ is in $G$. \label{tilde p: edges intersecting C_S}
    \item $G\left[S_0\cup C_{S} \cup \tilde{C}\cup C_{S_0} \cup \cdots \cup C_{S_t} \right]\cong K^r_{|S_0\cup C_{S} \cup \tilde{C}\cup C_{S_0} \cup \cdots \cup C_{S_t}|}$.
    \label{tilde p: small S_0 and cores are clique}
    \item $(S \cup C_{S}\cup C_{S_0}\cup\cdots\cup C_{S_t})\cap \tilde{C} = \varnothing$. \label{tilde p: no intersection}
\end{enumerate}
Lemma \ref{l: prob existence} guarantees the (typical) existence of edges intersecting with $\tilde{C}$. Indeed, the above properties concern only edges intersecting with $\tilde{C}$. To see that in Property \ref{tilde p: small S_0 and cores are clique}, note that $G\left[S_0\cup C_{S} \cup C_{S_0} \cup \cdots \cup C_{S_t} \right]\cong K^r_{|S_0\cup C_{S} \cup \tilde{C}\cup C_{S_0} \cup \cdots \cup C_{S_t}|}$ by Property \ref{p: small S_0 and cores are clique} and thus Property \ref{tilde p: small S_0 and cores are clique} indeed concern only edges intersecting with $\tilde{C}$. Lemma \ref{l: prob existence} further guarantees the non-intersection of $\tilde{C}$ with $S$, which yields Property \ref{tilde p: no intersection}.

\paragraph{Core extension property.} Recall that, by \eqref{align:i(S') le i(S)}, given $S\subseteq S\cup U\subseteq V(G)$ such that $C_S$ and $C_{S\cup U}$ are defined, then $i(S)\le i(S\cup U)$. If $U$ satisfies $i(S\cup U)=i(S)$, then $C_{S\cup U}=C_S$. In the following claim, we show that for a given $S$ for which $C_S$ is defined, we have that $U$ is satisfies $i(S\cup U)=i(S)$ whenever $U\subseteq C_{S_0}\cup \cdots \cup C_{S_t}$, for some $S\subsetneq S_0\subsetneq\dots\subsetneq S_t$. This `extension' property of the cores will be important for us throughout the activation process, and in fact, we required Properties \ref{p: edges intersecting C_S} and \ref{p: small S_0 and cores are clique} when choosing our cores so that this extension property will hold.
\begin{claim} \label{c: extending our cores}
    Let $e_0$ be an edge in $G$. Let $S \subseteq V(G) \setminus C_0$ be such that $C_S$ is defined. For every set $U$ and $t\in [r-1]$ such that $|S\cup U|\le r-1$ and $U \subseteq C_{S_0} \cup \cdots \cup C_{S_t}\cup \tilde{C}(e_0)$ for some $S \subsetneq S_0 \subsetneq \dots \subsetneq S_t\subsetneq e$ for which $C_{S_0}, \dots, C_{S_t}$ are defined and different than $C_{S}$, we have that
    \[
        C_{S \cup U} = C_{S}.
    \]
\end{claim}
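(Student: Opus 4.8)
The plan is to show that the index $i(S)$ itself already witnesses the defining properties (P1)--(P3) for the set $S \cup U$, so that by minimality $i(S \cup U) \le i(S)$; combined with the already-observed monotonicity $i(S) \le i(S\cup U)$ (which holds since $S \subseteq S\cup U$ and the properties are closed under inclusion), this forces $i(S\cup U) = i(S)$, i.e. $C_{S\cup U} = C_S$. The first point to note is that $S \cup U$ is core-definable: if it were not, some $S' \subsetneq S\cup U$ with $C_{S'}$ defined would satisfy $(S\cup U)\setminus S' \subseteq C_{S'}$; but one checks using the hypothesis $U \subseteq C_{S_0}\cup\cdots\cup C_{S_t}\cup\tilde C(e_0)$ together with the trivial-intersection property of the cores and the disjointness of $\tilde C$ from all the $C_{S_i}$ (Property \ref{tilde p: no intersection}) and from $S$, that this cannot coexist with $C_S$ being defined. (Here it is important that the $C_{S_i}$ are assumed \emph{different} from $C_S$, which is exactly why that hypothesis appears in the claim.)

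Now fix the index $i=i(S)$ and check that $C_i = C_S$ satisfies the three properties required to define $C_{S\cup U}$. Let $\varnothing = T_0 \subsetneq \cdots \subsetneq T_u \subsetneq S\cup U$ be any chain with $C_{T_0},\dots,C_{T_u}$ defined. The key observation is that each core appearing here is one of finitely many cores already under control: the $C_{T_a}$ with $T_a\subseteq S$ are among the cores with index $\le i(S)$, and for those $T_a$ meeting $U$, the extension mechanism (applied to the already-established membership $U\subseteq C_{S_0}\cup\cdots\cup C_{S_t}\cup\tilde C$) pins $C_{T_a}$ down to one of $C_{S_0},\dots,C_{S_t}$ or to a core already assigned to a subset of $S$. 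Consequently the union $(S\cup U)\cup C_i \cup C_{T_0}\cup\cdots\cup C_{T_u}$ is contained in $S \cup C_{S_0}\cup\cdots\cup C_{S_t}\cup \tilde C(e_0) \cup C_i \cup C_{S_0}\cup\cdots\cup C_{S_t}$ (with possibly a few more cores assigned to subsets of $S$, all of which had index $\le i(S)$ and so were already incorporated when $C_S$ was chosen). Property \ref{p: edges intersecting C_S} for $C_{S\cup U}$ then follows from Property \ref{p: edges intersecting C_S} for $C_S$ together with the defining property \ref{tilde p: edges intersecting C_S} of $\tilde C(e_0)$; Property \ref{p: small S_0 and cores are clique} follows likewise from Property \ref{p: small S_0 and cores are clique} for $C_S$ and \ref{tilde p: small S_0 and cores are clique}; and Property \ref{p: no intersection}, $(S\cup U)\cap C_i = \varnothing$, follows from \ref{p: no intersection} for $S$ together with \ref{tilde p: no intersection} and the trivial-intersection property of distinct cores.

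The main obstacle I expect is purely bookkeeping: carefully enumerating which cores $C_{T_a}$ can arise from subsets $T_a$ of $S\cup U$, and verifying that every one of them was already "seen" either when $C_S$ was defined (index $\le i(S)$, so covered by (P1),(P2) for $S$) or when $\tilde C(e_0)$ was chosen (covered by $(\overline P1)$--$(\overline P3)$). One has to be slightly careful that a chain $T_0\subsetneq\cdots\subsetneq T_u$ inside $S\cup U$ need not refine into "the $S$-part then the $U$-part", so one may need to pass to a common refinement or argue componentwise; but since all the relevant properties are closed under taking subsets and under adding further cores to the ambient union, this causes no real difficulty. A secondary subtlety is the role of $\tilde C(e_0)$: it is a single fixed auxiliary core (depending on the edge $e_0$ currently being activated), and the properties $(\overline P1)$--$(\overline P3)$ were set up precisely so that throwing $\tilde C(e_0)$ into any of the clique/edge configurations governed by the original cores preserves them — so the extension argument goes through verbatim with $\tilde C(e_0)$ adjoined.
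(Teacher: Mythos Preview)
Your overall strategy matches the paper's: show that the index $i(S)$ already satisfies (P1)--(P3) for $S\cup U$, and combine with monotonicity. But there is a genuine gap in the execution. The crux is your ``key observation'' that for a chain $T_0\subsetneq\cdots\subsetneq T_u\subsetneq S\cup U$, each $C_{T_a}$ is ``already under control''. For those $T_a$ meeting $U$, you write that ``the extension mechanism\ldots pins $C_{T_a}$ down to one of $C_{S_0},\dots,C_{S_t}$ or to a core already assigned to a subset of $S$''. This is circular: the ``extension mechanism'' you are invoking is precisely the claim you are trying to prove, applied to the smaller set $T_a$. Nothing in the raw definitions tells you what $C_{T_a}$ is when $T_a$ is a hybrid $S'\cup U'$ with $U'\neq\varnothing$.

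The same circularity infects your core-definability argument. If $S\cup U$ fails to be core-definable, the witnessing subset $S'$ need not lie inside $S$; it may be of the form $S''\cup U''$ with $U''\neq\varnothing$, and then your disjointness considerations get no traction on $C_{S'}$ unless you already know $C_{S''\cup U''}=C_{S''}$. The paper resolves both issues by running an induction on $|S\cup U|$: the inductive hypothesis gives $C_{S'\cup U'}=C_{S'}$ for every proper $S'\cup U'\subsetneq S\cup U$ (with $S'\subseteq S$, $U'\subseteq U$), which immediately reduces any chain inside $S\cup U$ to a chain inside $S$, and makes the core-definability argument go through. Your proposal is missing exactly this inductive scaffolding; once you add it, the rest of your outline becomes essentially the paper's proof.
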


\begin{proof}
We prove by induction on $|S\cup U|$, where the case when $|S\cup U|=0$ follows trivially.

Let $S'\cup U'\subsetneq S\cup U$ with $S'\subseteq S$ and $U'\subseteq U$ such that $C_{S'\cup U'}$ is defined. Suppose towards contradiction that $S'$ is not \hyperref[core-definable]{core-definable}. Then, there exists $S''\subsetneq S'$ such that $C_{S''}$ is defined and $S'\setminus S''\subseteq C_{S''}$. But then, by the induction hypothesis, $C_{S''\cup U'}=C_{S''}$, and by our assumption $(S'\cup U')\setminus (S''\cup U')\subseteq C_{S''}=C_{S''\cup U'}$. Therefore, $C_{S'\cup U'}$ is not defined --- contradiction. Therefore, $C_{S'}$ is defined, and by the induction hypothesis, we have that $C_{S'\cup U'}=C_{S'}$.

Let us verify that $C_{S\cup U}$ is defined. Suppose towards contradiction that $S\cup U$ is not \hyperref[core-definable]{core-definable}. Then, there exists $S'\cup U'\subsetneq S\cup U$ such that $S'\subseteq S$, $U'\subseteq U$, $C_{S'\cup U'}$ is defined, and $(S\cup U)\setminus (S'\cup U')\subseteq C_{S'\cup U'}$. We then have by the above that $C_{S'\cup U'}=C_{S'}$. Suppose first that $U'=U$. Then $(S\cup U)\setminus (S'\cup U')=S\setminus S'\subseteq C_{S'}$, contradicting the fact that $C_{S}$ is defined. Otherwise, $U'\subsetneq U$. Then, by the induction hypothesis, $C_{S\cup U'}=C_S$. By our assumption, $(S\cup U')\setminus (S'\cup U')\subseteq C_{S'\cup U'}$, contradicting the fact that $C_{S\cup U'}$ is defined. Therefore, we conclude that $C_{S\cup U}$ is defined.

Therefore, it suffices to verify that $S\cup U$ satisfies Properties \ref{p: edges intersecting C_S} through \ref{p: no intersection} with respect to $C_S$. Let $k\ge1$ be an integer and let $A_1\subsetneq A_2\subsetneq\cdots\subsetneq A_k\subsetneq S\cup U$ be such that $C_{A_i}$ is defined for every $i\in [k]$. We can then write $A_i=(A_i\cap S)\cup (A_i\cap U)$. Noting that $|A_i|<|S\cup U|$, by the above and by the hypothesis, we then have that $C_{A_i}=C_{S\cap A_i}$, and therefore when we consider $C_{A_i}$, we may assume that $A_i\subseteq S$. 

\begin{enumerate}
    \item First, let us show that every edge $e\subseteq (S\cup U)\cup C_S\cup C_{A_1}\cup \cdots \cup C_{A_k}$ with $e\cap C_S\neq \varnothing$ is in $G$, that is, let us verify Property \ref{p: edges intersecting C_S}. Note that, by the above, there is some sequence of $S_1'\subsetneq \cdots \subsetneq S_k'\subsetneq S$ such that $C_{A_j}=C_{S_j'}$ for every $j\in [k]$. Since $S\subsetneq S_0$, we have that $B_1\subsetneq \cdots \subsetneq B_m$ where $m=k+t+2$ and $B_1=S_1', \ldots, B_k=S_k', B_{k+1}=S, B_{k+1}=S_0, \ldots, B_m=S_t$. Thus, if $e\cap \tilde{C}\neq \varnothing$, then by Property \ref{tilde p: edges intersecting C_S} with $B_m$ we have that $e$ is in $G$. Otherwise, let $\tau\in [t]$ be the maximal index such that $e\cap C_{S_{\tau}}\neq \varnothing$. We may assume that $\tau\ge 1$, otherwise $e\subseteq S\cup C_S\cup C_{A_1}\cup \ldots \cup C_{A_k}$ and then the above follows from Property \ref{p: edges intersecting C_S} with respect to $S$ and $C_S$. Then, choosing $B_1=A_1, \ldots, B_k=A_k, B_{k+2}=S, B_{k+3}=S_{1},\ldots, B_m=S_{\tau}$, since $e\cap C_{S_{\tau}}\neq\varnothing$, we have that $e$ is in $G$ by Property \ref{p: edges intersecting C_S}. Therefore, $S\cup U$ satisfies property \ref{p: edges intersecting C_S} with respect to $C_S$.

    \item We now turn to show that $G[A_1\cup C_{S}\cup C_{A_1}\cup \cdots \cup C_{A_k}]\cong K^r_{|A_1\cup C_{S}\cup C_{A_1}\cup \cdots \cup C_{A_k}|}$. It suffices to show that $G[(A_1\cup U)\cup C_{S}\cup C_{A_1}\cup \cdots \cup C_{A_k}]\cong K^r_{|(A_1\cup U)\cup C_{S}\cup C_{A_1}\cup \cdots \cup C_{A_k}|}$, where we may assume, as discussed above, that $A_i\subseteq S$ for all $i\in [k]$.

    To that end, note that by Property \ref{p: small S_0 and cores are clique} (or by Property \ref{tilde p: small S_0 and cores are clique} if $U$ intersects with $\tilde{C}$) with respect to $S_t$ and $C_{S_t}$, we have that for every $B_1\subsetneq B_2\subsetneq \cdots \subsetneq B_m\subseteq S_t$ such that $C_{B_i}$ is defined for every $i\in [m]$, $G[B_1\cup C_{S_t}\cup C_{B_1}\cup\cdots\cup C_{B_m}\cup\tilde{C}]\cong K^r_{|B_1\cup C_{S_t}\cup C_{B_1}\cup\cdots\cup C_{B_m}\cup\tilde{C}|}$. We may thus choose $B_1=A_1, \ldots, B_k=A_k, B_{k+1}=S, B_{k+2}=S_0,\ldots, B_m=S_t$, and since $U\subseteq C_{S_0}\cup\cdots\cup C_{S_t}\cup\tilde{C}$, we conclude that $S\cup U$ satisfies Property \ref{p: small S_0 and cores are clique}.

    \item Finally, let us show that $(S\cup U)\cap C_S=\varnothing$. Indeed, since $C_S$ is defined, we have that $S\cap C_S=\varnothing$, and by our assumption, $C_{S_0},\ldots, C_{S_t}$ are different from $C_{S}$ (and in particular disjoint, due to the definition of the sequence $Q_i$, $i\in\left[\lfloor\frac{n}{\ell}\rfloor\right]$), and by property \ref{tilde p: no intersection}, $\tilde{C}$ is disjoint from $C_S$. Therefore, $U\cap C_S=\varnothing$ and $(S\cup U)\cap C_S=\varnothing$.
\end{enumerate}
\end{proof}

\paragraph{Activating the edges.} With this at hand, we are ready to prove that we can activate all the edges of $G$ outside of $H$. The argument here will be a natural extension, and formalisation, of the argument for $r=3$ detailed in Section \ref{s: weak outline}. We consider separately edges that intersect with $C_0$ and edges that do not. 

\begin{claim}\label{claim:activate-edges-outside-C0}
    One can activate all edges $e \in V(G) \setminus C_0$. 
\end{claim}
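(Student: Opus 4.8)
The plan is to order the edges $e\subseteq V(G)\setminus C_0$ by increasing value of $i(e)$ (using the auxiliary core $\tilde C=\tilde C(e)$ only as a "scaffold" that is never itself completed), and for each such $e=\{v_1,\dots,v_r\}$ show that every edge induced by $e\cup\tilde C$ other than $e$ itself is either already in $H$ or can be activated before $e$; then $e$ closes a copy of $K^r_s$ together with $\tilde C$ and is activated. The edges induced by $e\cup\tilde C$ split according to how many vertices they take from $e$. First I would dispose of the edges $f\subseteq \tilde C\cup C_S\cup C_{S_0}\cup\cdots\cup C_{S_t}$ lying entirely inside the "core part": by Property~($\overline P$2) this set together with $C_0$ spans a clique, so any such $f$ meeting $C_0$ lies in $H$ by \ref{edges-from-C_0}, and any such $f$ disjoint from $C_0$ is activated with $C_0$ (all of the remaining $\binom{s}{r}-1$ edges of the clique on $C_0\cup f$ are in $H$ or already activated). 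This in particular activates all edges induced by any single core and all edges induced by $\tilde C$.

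Next I would handle edges $f$ that meet $e$, by induction on $|f\cap e|$, going from $|f\cap e|=r-1$ (i.e.\ $f=(e\setminus\{v_j\})\cup\{c\}$, $c\in\tilde C$) down in the sense of "closing with the appropriate core." Fix $f$ with $f\cap e=S\subsetneq e$, so $f=S\cup C''$ with $C''\subseteq\tilde C$, $|C''|=r-|S|$. If $C_S$ is defined, the claim is that $f$ closes a copy of $K^r_s$ with $C_S$. The ingredients are: (i) all edges inside $C_S\cup\tilde C$ are activated, by the previous paragraph; (ii) for each $c\in C''$, the key point $C_{S\cup\{c\}}=C_S$ — this is exactly Claim~5.4 (core extension) applied with $U=\{c\}\subseteq\tilde C$ (valid since $S\subsetneq S_0\subsetneq\cdots$ exists: take any $S_0,\dots,S_t$ with $S\subsetneq S_0$, whose cores differ from $C_S$; if no such chain exists because $C_{S\cup\{c\}}$ forces $C_S$ trivially, the conclusion still holds) — hence all edges $S\cup\{c\}\cup C'''$ with $c\in C''$, $C'''\subseteq C_S$ lie in $H$ via \ref{edges-from-C_S}; (iii) more generally edges $f'=S'\cup(\text{stuff in }C_S\cup\tilde C)$ with $S'\subsetneq S$ close with $C_{S'}$ and are activated by the inner induction on $|S'\cap e|$, again using $C_{S'\cup U}=C_{S'}$ from Claim~5.4; (iv) edges $S\cup(\text{stuff in }C_S)$ are in $H$ by \ref{edges-from-C_S}. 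Assembling (i)–(iv), every edge of the clique on $S\cup C''\cup C_S$ other than $f$ is in $H$ or already activated, so $f$ is activated. If instead $C_S$ is not core-definable, then there is $S'\subsetneq S$ with $C_{S'}$ defined and $S\setminus S'\subseteq C_{S'}$; then $f=S\cup C''\subseteq (S'\cup C_{S'})\cup\tilde C$ with $|f\cap e|$ only depending on $S'$ via $S'\subseteq S$, and one argues $f$ closes with $C_{S'}$ using that $C_{S'\cup U}=C_{S'}$ for $U\subseteq\tilde C$ (Claim~5.4) and that $S'\cup C_{S'}$-type edges involving $v_j\in S\setminus S'\subseteq C_{S'}$ are already covered — this reduces to a smaller $|f\cap e|$ case, so the induction goes through.

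Finally, having activated every edge of $e\cup\tilde C$ except $e$, and since $G[e\cup\tilde C]\cong K^r_s$ by ($\overline P$2) (applied with $S=S_0=\varnothing$, giving $G[\tilde C\cup C_0]$ a clique, together with ($\overline P$1) giving the edges meeting $\tilde C$, plus $e\in E(G)$ and the within-$e$ and mixed edges which we have just activated), the edge $e$ closes a copy of $K^r_s$ with $\tilde C$ and is activated. Processing the edges in order of increasing $i(e)$ guarantees that whenever we invoke "$C_{S'}$ is defined with $i(S')\le i(S)\le i(e)$" we are only relying on cores and edges whose activation happens no later, so the ordering is consistent; and the scaffold edges $\tilde C(e)$ themselves are edges of $G$ between vertices outside $C_0$, but we never need to \emph{activate} a genuinely new $\tilde C$-internal edge that is not already handled by the first paragraph, so no circularity arises.

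The main obstacle I expect is bookkeeping the nested inductions cleanly: the outer induction is on $i(e)$, and inside the treatment of a single $e$ there is a further induction on $|f\cap e|$ (equivalently on $|S|$) used to show the "mixed" edges $S\cup C''$ are activated, and each step of that inner induction feeds on Claim~5.4 to collapse $C_{S\cup U}$ to $C_S$. Making sure that every core $C_{S'}$ and every already-activated edge invoked in handling $f$ genuinely precedes $f$ in the chosen order — and that the non-core-definable case reduces strictly — is where care is needed; the geometric content (cliques spanned by unions of cores and $\tilde C$) is entirely supplied by Properties ($\overline P$1)–($\overline P$3) and (P1)–(P3), so no new probabilistic input is required here.
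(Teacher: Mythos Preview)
Your plan is essentially the paper's: activate each $e\subseteq V(G)\setminus C_0$ by closing a clique with $\tilde C(e)$, and handle the intermediate edges $S\cup U$ by induction on $|S|$, closing each with $C_S$ and invoking the core-extension claim. There is, however, a real gap in how you set up the induction. You state the induction step only for $f=S\cup C''$ with $C''\subseteq\tilde C$, but when you close such an $f$ with $C_S$, the smaller edges you must already have---those of the form $S'\cup U'$ with $S'\subsetneq S$ and $U'\subseteq C''\cup C_S$---have their non-$e$ part in $\tilde C\cup C_S$, not in $\tilde C$ alone. You acknowledge this in~(iii) but your stated hypothesis does not cover it, so the induction as written does not close. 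The paper fixes exactly this by proving the stronger statement from the outset: for every $S\subsetneq e$ one can activate all edges $S\cup U$ with $U\subseteq\tilde C\cup C_{S_0}\cup\cdots\cup C_{S_t}$ for any chain $S\subsetneq S_0\subsetneq\cdots\subsetneq S_t\subsetneq e$ of core-definable sets. With this formulation the step is immediate, since $U\cup C_S$ is again of the required shape for $S'$ via the chain $S'\subsetneq S\subsetneq S_0\subsetneq\cdots$. The paper even remarks that one only needs the conclusion for $U\subseteq\tilde C$, but must prove the stronger version for the induction to go through.

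Two smaller points. The outer ordering ``by increasing $i(e)$'' is both undefined (indices $i(\cdot)$ are only assigned to core-definable sets of size at most $r-1$, never to $r$-edges) and unnecessary: once the inner induction is strengthened as above, every activation of an auxiliary edge depends only on $H$ and on edges at strictly smaller $|S|$-level, so no global ordering on the $e$'s is needed to avoid circularity. And in the non-core-definable case the paper's argument is much simpler than yours: if $S'\subsetneq S$ has $C_{S'}$ defined and $S\setminus S'\subseteq C_{S'}$, then by core extension $C_{S'\cup U}=C_{S'}$ for every $U\subseteq C''$, so $f=S\cup C''=(S'\cup C'')\cup(S\setminus S')$ is already in $H$ via \ref{edges-from-C_S}; there is no need to activate it at all.
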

\begin{proof}
    Let $e \subseteq V(G) \setminus C_0$ be of size $r$. We will show that $e$ closes a copy of $K^r_s$ with $\Tilde{C}=\tilde{C}(e)$.

    Let us prove by induction on $|S|$ that for every set $S \subsetneq e$, one can activate all edges (that are not in $H$) of the form $S \cup U$, where $U \subseteq \Tilde{C} \cup C_{S_0} \cup \dots \cup C_{S_t}$ for $S \subsetneq S_0 \subsetneq S_1 \subsetneq \dots \subsetneq S_t \subsetneq e$ for which $C_{S_i}$ is defined for all $i\in[t]$. Note that we need the above only for $U\subseteq \tilde{C}$, however, for the induction argument it will be easier to prove the aforementioned stronger statement.

    For the base case of the induction, we consider $S = \varnothing$. Let $f \subseteq \Tilde{C} \cup C_{S_0} \cup \dots \cup C_{S_t}$ be an edge for some $\varnothing \subsetneq S_0 \subsetneq S_1 \subsetneq \dots \subsetneq S_t \subsetneq e$. Note that by Property \ref{p: small S_0 and cores are clique}, $G[f\cup C_0]\cong K^r_{|f\cup C_0|}$, and thus if $f\cap C_0\neq \varnothing$, we have that $f$ is in $H$. Let us thus suppose that $f\cap C_0=\varnothing$. By \ref{edges-from-C_0}, all the edges induced by $C_0$ are in $H$. Moreover, for every $S' \subsetneq f$, since $G[f\cup C_0]\cong K^r_{|f\cup C_0|}$, we have that $C_{S'} = C_0$. Thus, by \ref{edges-from-C_S}, every edge of the form $S' \cup C'_0$, for every $C'_0 \subseteq C_0$ of size $r - |S'|$, is in $H$. Hence, $f\cup C_0$ is a clique in $G$, and all its edges but $f$ are in $H$. Thus, we can activate $f$.

    For the induction step, let $i \in [r-1]$. Suppose that for every $S'\subsetneq e$, $|S'|<i$, and $S'\subsetneq A_1\subsetneq A_2\subsetneq\dots\subsetneq A_k\subsetneq e$ such that $C_{A_i}$ exists for all $i\in[k]$, we have activated (or added to $H$) all edges of the form $S'\cup U'$ where $U'\subseteq \tilde{C}\cup C_{A_1}\cup\ldots\cup C_{A_k}$. Let $S \subsetneq e$ of size $i$. Let $f$ be an edge of the form $S \cup U$ where $U \subseteq \Tilde{C} \cup C_{S_0} \cup \dots \cup C_{S_t}$ for some $S \subsetneq S_0 \subsetneq S_1 \subsetneq \dots \subsetneq S_t \subsetneq e$ for which $C_{S_0}, \dots, C_{S_t}$ are defined. We will consider two separate cases, determined by whether $C_{S}$ is defined or not.

    Assume first that $C_{S}$ is defined. We will show that we can activate $f$ by closing a copy of $K^r_s$ induced by $f \cup C_{S}$. Indeed, note if $f\cap C_S\neq\varnothing$ then $f$ is in $H$ (indeed, by Claim \ref{c: extending our cores}, $C_{f\setminus C_{S}}=C_S$). We may thus suppose $f\cap C_S=\varnothing$. By Claim \ref{c: extending our cores}, we have that $C_{S \cup U'} = C_{S}$ for every $U' \subseteq U$. Thus, we added to $H$ all edges of the form $S \cup U' \cup C'$ for every $U' \subsetneq U$ and $C' \subseteq C_S$. By the induction hypothesis, we have already activated (or have in $H$) all edges of the form $S' \cup U'$ for every $S' \subsetneq S$, $U' \subseteq U \cup C_S$. Hence, we can activate $f$ by closing a copy of $K^r_s$ induced by $f\cup C_S$.

    Assume now that $C_{S}$ is not defined. Then, there exists $S' \subsetneq S$ such that $C_{S'}$ is defined and $S \setminus S' \subseteq C_{S'}$. By Claim \ref{c: extending our cores}, we have that $C_{S' \cup U'} = C_{S'}$ for every $U' \subseteq U$. Hence, we have already added to $H$ all edges of the form $S' \cup U' \cup C'$ for every $U' \subseteq U$ and $\varnothing\neq C' \subseteq C_{S'}$. In particular, letting $C' = S \setminus S'$, we get that the edge $f = S \cup U$ is already in $H$.
    
    Altogether, all edges induced by $e\cup \tilde{C}$, other than $e$, are either in $H$ or have been activated, and thus $e$ can be activated by closing a copy of $K^r_s$ with $\tilde{C}$. 
\end{proof}

\begin{claim}
    One can activate all the edges intersecting with $C_0$.
\end{claim}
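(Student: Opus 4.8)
The plan is to run the same argument as in Claim~\ref{claim:activate-edges-outside-C0}, with the vertices of $e$ lying in $C_0$ playing the role of an extra ``boundary'' and with the edges of type~\ref{edges-from-C_0-and-C_S} in $H$ taking over the role that type~\ref{edges-from-C_S} played there. Fix an edge $e$ with $e\cap C_0\neq\varnothing$ and write $e=e'\sqcup e_0$, $e_0=e\cap C_0$, $e'=e\setminus C_0$. If $e'=\varnothing$ then $e\subseteq C_0$ and $e\in H$ by~\ref{edges-from-C_0}, so assume $1\le|e'|,|e_0|\le r-1$. Let $\tilde C=\tilde C(e)$; since $\varnothing\subsetneq e$ is \hyperref[core-definable]{core-definable} and $C_\varnothing=C_0$, the set $S_{\ref{l: prob existence}}$ used to choose $\tilde C$ contains $e\cup C_0$, so every $r$-subset of $e\cup\tilde C$ meeting $\tilde C$ lies in $G$, while every $r$-subset avoiding $\tilde C$ equals $e$; hence $G[e\cup\tilde C]\cong K^r_s$. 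It therefore suffices to show that every $r$-subset $f\subsetneq e\cup\tilde C$ with $f\neq e$ is in $H$ or has been activated earlier, whereupon $e$ closes a copy of $K^r_s$ with $\tilde C$.

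Write $f=S'\sqcup E_0\sqcup U$ with $S'=f\cap e'$, $E_0=f\cap e_0$, $U=f\cap\tilde C$; since $|f|=r=|e|$ and $f\neq e$ we have $U\neq\varnothing$. If $f\cap C_0=\varnothing$ then $f\subseteq V(G)\setminus C_0$ and $f$ has already been activated by Claim~\ref{claim:activate-edges-outside-C0} (we process all edges avoiding $C_0$ before all edges meeting $C_0$). For the edges meeting $C_0$ I would prove, by induction on $|S'|$, the more general statement: for every $S'\subseteq e'$, every $E_0\subseteq e_0$ and every chain $S'\subsetneq S_0\subsetneq\dots\subsetneq S_t\subsetneq e'$ with all $C_{S_i}$ defined, every $r$-edge $f=S'\cup E_0\cup U$ with $U\subseteq\tilde C\cup C_{S_0}\cup\dots\cup C_{S_t}$, $f\cap C_0\neq\varnothing$ and $f\neq e$, is in $H$ or has been activated. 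In the base case $S'=\varnothing$, split $U=U_1\sqcup U_2$ with $U_1=U\cap C_0$; if $U_2=\varnothing$ then $f\subseteq C_0$ is in $H$ by~\ref{edges-from-C_0}, and otherwise $G[U_2\cup C_0]$ is a clique so $C_0$ is the first admissible index for $i(U_2)$, giving $C_{U_2}=C_0$ and $f=U_2\cup(E_0\cup U_1)\in H$ by~\ref{edges-from-C_S}. For the inductive step one distinguishes whether $C_{S'}$ is defined: if not, take $S''\subsetneq S'$ with $C_{S''}$ defined and $S'\setminus S''\subseteq C_{S''}$, identify $C_{S''\cup (U\setminus C_0)}=C_{S''}$ via Claim~\ref{c: extending our cores}, and read $f$ off from~\ref{edges-from-C_0-and-C_S} with clique-part $S'\setminus S''\subseteq C_{S''}$ and $C_0$-part $E_0$; if $C_{S'}=C_0$, then $E_0\subseteq C_{S'}$ and again $f\in H$ by~\ref{edges-from-C_S} applied to $S'\cup(U\setminus C_0)$ whose core is $C_0$ by Claim~\ref{c: extending our cores}; and if $C_{S'}$ is defined and distinct from $C_0$, then $f$ is disjoint from $C_{S'}$ and $f$ closes a copy of $K^r_s$ with $C_{S'}$, where every $r$-subset $g\neq f$ of $f\cup C_{S'}$ either avoids $C_0$ (handled by Claim~\ref{claim:activate-edges-outside-C0}), or has $|g\cap e'|<|S'|$ (handled by the induction hypothesis), or has $g\cap e'=S'$ and lies in $H$ by~\ref{edges-from-C_S}/\ref{edges-from-C_0-and-C_S} exactly as above, again using Claim~\ref{c: extending our cores} to name the relevant cores.

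I expect the main obstacle to be the bookkeeping in the degenerate cases: when some $C_{S_i}$, or the core $C_{S'}$ assigned to the projection $S'=f\cap e'$, coincides with $C_0$ (so $f$ is forced to overlap its own candidate clique, and one cannot close a copy of $K^r_s$ with $C_{S'}$). These are precisely the configurations that the type-\ref{edges-from-C_0-and-C_S} edges of $H$ were built to absorb, but verifying that the extension/chain structure of Claim~\ref{c: extending our cores} still applies after the substitutions ``$\tilde C$ or some $C_{S_i}\leftrightarrow C_0$'', and choosing a single global ordering of the edges meeting $C_0$ that is consistent with all the ``activated earlier'' requirements above (ordering by $|f\setminus C_0|$, and within the processing of $e$ itself going through $S'\subseteq e'$ by size before adding $e$ last), is where essentially all the remaining effort goes; conceptually nothing new beyond Section~\ref{s: constructing H}, Claim~\ref{c: extending our cores} and Claim~\ref{claim:activate-edges-outside-C0} is needed.
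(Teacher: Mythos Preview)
Your proposal is correct and follows essentially the same approach as the paper's own proof: induction on $|S'|$ with $S'=f\cap(e\setminus C_0)$, closing each $f$ with $C_{S'}$, invoking Claim~\ref{c: extending our cores} to identify the relevant cores, and using edges of type~\ref{edges-from-C_0-and-C_S} in place of~\ref{edges-from-C_S} whenever a $C_0$-part is present. The only cosmetic differences are that the paper absorbs your $E_0$ into $U$ (taking $U\subseteq C_0\cup\tilde C\cup C_{S_0}\cup\dots\cup C_{S_t}$) and does not separate out the sub-case $C_{S'}=C_0$, which it handles implicitly via~\ref{edges-from-C_0-and-C_S}.
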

\begin{proof}
    Let $e$ be an edge such that $e \cap C_0 \neq \varnothing$. Once again, we will show that $e$ closes a copy of $K^r_s$ with $\Tilde{C}=\tilde{C}(e)$ in $H$.

    Let us prove by induction on $|S|$ that for every set $S \subsetneq e \setminus C_0$, one can activate (if not already in $H$) all edges of the form $S \cup U$ where $U \subseteq C_0 \cup \Tilde{C} \cup C_{S_0} \cup \dots \cup C_{S_t}$ for some $S \subsetneq S_0 \subsetneq S_1 \subsetneq \cdots \subsetneq S_t \subsetneq e \setminus C_0$ for which $C_{S_0},\ldots, C_{S_t}$ were defined. 
    We can then activate $e$ as it closes a copy of $K^r_S$ with $\tilde{C}$.

    For the base case of the induction, we consider $S = \varnothing$. Let $f \subseteq C_0 \cup \Tilde{C} \cup C_{S_0} \cup \dots \cup C_{S_t}$ be an edge for some $S \subsetneq S_0 \subsetneq S_1 \subsetneq \cdots \subsetneq S_t \subsetneq e \setminus C_0$ for which $C_{S_0},\ldots, C_{S_t}$ were defined. By Claim \ref{claim:activate-edges-outside-C0}, we may assume that $f \cap C_0 \neq \varnothing$. By Claim \ref{c: extending our cores}, we have that $C_{f\setminus C_0}=C_{S}=C_0$ since $S=\varnothing$. Therefore, by \ref{edges-from-C_S}, $f$ is in $H$. 
    
    For the induction step, let $i \in [r-1]$ and take $S \subsetneq e \setminus C_0$ of size $i$. Let $f$ be an edge of the form $S \cup U$, where $U \subseteq C_0 \cup \Tilde{C} \cup C_{S_0} \cup \dots \cup C_{S_t}$ for some $S \subsetneq S_0 \subsetneq S_1 \subsetneq \cdots \subsetneq S_t \subsetneq e \setminus C_0$ for which $C_{S_0},\ldots, C_{S_t}$ were defined. We will consider two separate cases, determined by whether $C_{S}$ is defined or not.

    Assume first that $C_{S}$ is defined. If $U \cap C_0 = \varnothing$, then we can activate $S \cup U$ by Claim \ref{claim:activate-edges-outside-C0}. If $U \cap C_0 \neq \varnothing$, note that by Claim \ref{c: extending our cores}, $C_{S \cup (U \setminus C_0)} = C_S$. We thus have that $f$ closes a copy of $K^r_s$ with $C_S$. Indeed, let $h \subseteq f \cup C_S$ be an edge with $h\neq f$. If $S \cap h \neq S$, then $|S\cap h|\le i-1$. Writing $S'=S\cap h$ and $U'=h\setminus S$, we want to apply the induction hypothesis with $S'\cup U'$. Indeed, note that $U'\subseteq C_0 \cup \Tilde{C} \cup C_S \cup C_{S_0} \cup \dots \cup C_{S_t}$, and in particular where $S'\subsetneq S \subsetneq S_0\subsetneq \cdots \subsetneq S_t\subsetneq e$. Thus, by the induction hypothesis, we have already activated (or added to $H$) the edge $h$. Otherwise, we have $S \subseteq h$. Noting that by Claim \ref{c: extending our cores}, $C_{h \setminus C_0} = C_S$, by \ref{edges-from-C_0-and-C_S} we have already added the edge $h$ to $H$.

    Assume now that $C_{S}$ is not defined. Then, there exists $S' \subseteq S$ such that $C_{S'}$ is defined and $S \setminus S' \subseteq C_{S'}$. By Claim \ref{c: extending our cores}, $C_{S' \cup (U \setminus C_0)} = C_{S'}$. Thus, by \ref{edges-from-C_0-and-C_S}, we added to $H$ all edges of the form $S' \cup (U \setminus C_0) \cup C' \cup C'_0$ for every $\varnothing\neq C' \subseteq C_{S'}$ and $C'_0 \subseteq C_0$. In particular, letting $C' = S \setminus S' \subseteq C_{S'}$ and $C'_0 = U \cap C_0$, we get that the edge $f = S' \cup (U \setminus C_0) \cup (S \setminus S') \cup (U \cap C_0)=S\cup U$ was added to $H$.
\end{proof}

\section{Strong Saturation}\label{s: strong}
We begin with the proof of the lower bound of Theorem \ref{th: wsat}\ref{i: strong}, which sheds some light as to why typically $\sat(G^r(n,p),K^r_s)=(1+o(1))\binom{n}{r-1}\log_{\frac{1}{1-p^{r-1}}}(n)$.

\begin{proof}[Proof of the lower bound of Theorem \ref{th: wsat}\ref{i: strong}]
Let us show that if $H$ is $K_s^r$-saturated in $G$, then $e(H)\ge (1+o(1))\binom{n}{r-1}\log_{\frac{1}{1-p^{r-1}}}n$. Note that if $H$ is $K_s^r$-saturated in $G$, then adding any $e\in E(G)\setminus E(H)$ creates a new copy of $K_s^r$, and in particular, a new copy of $K_{r+1}^r$. Let $\alpha=\frac{1}{1-p^{r-1}}$.  

Given an $(r-1)$-subset $S\subseteq V(G)$, let $N_H(S)=\{v\in V(G)\colon \{v\}\cup S\in E(H)\}$. Let
\begin{align*}
    A\coloneqq \{S\subseteq V(G)\colon |S|=r-1 \text{ and } |N_H(S)|\ge \log_{\alpha}^2n\},
\end{align*}
and set $B=\binom{V(G)}{r-1}\setminus A$. Note that if $|A|=\Omega\left(n^{r-1}\right)$, then $|E(H)|=\Omega\left(n^{r-1}\log_{\alpha}^2n\right)$, and we are done. We may thus assume that $|A|=o\left(n^{r-1}\right)$, and thus $|B|=(1+o(1))\binom{n}{r-1}$.

Let $S\in B$. We claim that there are at least $(1+o(1))\log_{\alpha}n$ edges $e\in E(H)$, such that $S$ is the only $(r-1)$-subset of $e$ which is in $B$. This will imply that the number of edges in $H$ is at least $|B|\log_{\alpha}n=(1+o(1))\binom{n}{r-1}\log_{\alpha}n$, as required. Let $u\in V(G)$ be such that $e=\{u\}\cup S\in E(G)\setminus E(H)$. Then $\{u\}\cup S$ closes a copy of $K_{r+1}^r$ together with $H$, in particular, there is some $w\in V(G)$ such that $e\cup H[S\cup \{u\}\cup \{w\}]\cong K_{r+1}^r$. Note that for all but at most $\log_{\alpha}^4n$ choices of $u$, we have that the only $(r-1)$-subset of $S\cup \{w\}$ that is in $B$ is $S$. Indeed, $S\in B$, so there are at most $\log_{\alpha}^2n$ choices of $w$ such that $\{w\}\cup S\in E(H)$. Then, if at least one other $(r-1)$-subset $S\neq S'\subseteq S\cup\{w\}$ is such that $S'\in B$, then we have at most $\log_{\alpha}^2n$ choices for this $u$ as well, since $S'\cup \{u\}\in E(H)$. 

Thus, we have a set $U$ of at least $|N_G(S)|-\log_{\alpha}^4-O(1)$ vertices $u$ such that there exists $w=w(u)$ forming an edge with $S$ and with every $\{u\}\cup S'$, where $S'\subset S$, $|S'|=r-2$, and such that $S$ is the only $(r-1)$-subset of $S\cup\{w\}$ that is in $B$. Let us prove that $|\{w(u)\colon u\in U\}|\geq(1+o(1))\log_{\alpha}n$. This will immediately imply the desired upper bound.

To that end, it suffices to show that \textbf{whp} for any $(r-1)$-set $S$, and $W$ of size at most $\log_{\alpha} n-5\log_{\alpha}\log_{\alpha}n$, there are at least $2\log_{\alpha}^4n$ vertices $u\in V(G)\setminus (S\cup W)$ such that $S\cup\{u\}\in E(G)$, and for every $w\in W$ there is some $X\subset S\cup\{u\}$ with $|X|=r-1, X\neq S$, such that $X\cup \{w\}\notin E(G)$. Fix $S$ and $W$. The probability that $u$ satisfies the above is $p'=p(1-p^{r-1})^{|W|}\ge p\cdot\frac{\ln^5n}{n}$. These events are independent for different $u$, so the number of vertices satisfying this property is distributed as $Bin(n-(r-1)-|W|,p')$. Thus, by a standard Chernoff's bound, the probability that there are less than $2\log^4_{\alpha}n$ such vertices is at most $\exp\left(-\Omega(\ln^5n)\right)$. There are $\binom{n}{r-1}$ ways to choose $S$ and $\sum_{i=1}^{\log_{\alpha}n}\binom{n}{i}\le n^{\log_{\alpha}n}$ ways to choose $W$, and thus by the union bound there are \textbf{whp} at least $2\log_{\alpha}^4n$ such vertices.
\end{proof}

We now turn to the upper bound of Theorem \ref{th: wsat}\ref{i: strong}. As we follow here the proof strategy from \cite{KS17} with minor adjustments, in the next section we give an outline of the proof. For the full proof, see the Appendix.

\subsection{Outline of the upper bound of Theorem \ref{th: wsat}\ref{i: strong}}
We will utilise two lemmas. The first one generalises a result of Krivelevich \cite{K95} to the case of hypergraphs:
\begin{lemma}
\label{lm:good_subhypergraph - outline}
    Let $r \ge 3$ and $t \ge r$ be integers. There exist positive constants $c_0$ and $c_1$ such that if 
    \[
        \rho \ge c_1 n^{-\frac{t + 1 - r}{\binom{t + 1}{r} - 1}}\quad \text{and}\quad
        k \geq c_0 n^{\frac{\binom{t}{r}(t + 1 - r)}{\left(\binom{t+1}{r} - 1\right)(t-1)}} \log^{\frac{1}{t-1}} n,
    \]    
    then \textbf{whp} $G^{r}(n,\rho)$ contains a subhypergraph $H$ on $[n]$ such that
\begin{itemize}
    \item $H$ is $K^r_{t+1}$-free,
    \item every induced $k$-subhypergraph of $H$ contains a copy of $K^r_t$.
\end{itemize}    
\end{lemma}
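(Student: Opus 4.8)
The plan is to follow Krivelevich's deletion-method strategy, adapted to hypergraphs. We take $H_0 = G^r(n,\rho)$ with $\rho$ as in the statement, and modify it into the required $H$ by deleting a small number of edges. Concretely, for each copy of $K^r_{t+1}$ in $H_0$ we delete one of its edges; after this, $H$ is $K^r_{t+1}$-free by construction, and the point is to show that not too many edges are removed, so that every $k$-vertex induced subhypergraph still retains a copy of $K^r_t$. The two quantitative inputs are: (i) a Janson-type bound showing that \textbf{whp} every $k$-set of vertices spans many copies of $K^r_t$ in $H_0$, and (ii) an upper bound, \textbf{whp} uniformly over all $k$-sets, on the number of copies of $K^r_{t+1}$ inside a $k$-set, so that the number of deleted edges inside any fixed $k$-set is dwarfed by the number of $K^r_t$'s.

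First I would set up the expectations. Fix a $k$-set $W$. The expected number of copies of $K^r_t$ in $H_0[W]$ is $\mu_t = \binom{k}{t}\rho^{\binom{t}{r}}$, and the expected number of copies of $K^r_{t+1}$ is $\mu_{t+1} = \binom{k}{t+1}\rho^{\binom{t+1}{r}}$. The choices of $\rho$ and $k$ in the statement are exactly calibrated so that $\mu_t$ is polynomially large (in fact much larger than $k \log n$, which we will need for the union bound over $\binom{n}{k} \le 2^n \le e^{n}$ sets — here we must be careful that $\mu_t$ beats $n$, not just $\log n$; this is where the precise exponent of $k$ enters) while $\mu_{t+1} = o(\mu_t / \binom{t+1}{r})$, so that even deleting one edge per $K^r_{t+1}$ cannot destroy all the $K^r_t$'s. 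I would verify these two inequalities as a short preliminary computation from the hypotheses on $\rho$ and $k$, treating $t,r$ as constants.

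Next, the concentration step. For the lower tail of the number of copies of $K^r_t$ in a fixed $W$, I would apply Janson's inequality: writing $\Delta = \sum \Pr[A_i \cap A_j]$ over pairs of copies of $K^r_t$ in $W$ sharing at least one edge, Janson gives $\Pr[\text{fewer than } \mu_t/2 \text{ copies}] \le \exp(-\Omega(\mu_t^2/(\mu_t + \Delta)))$. One checks that $\Delta = o(\mu_t^2)$ (equivalently $\mu_t^2/\Delta \to \infty$ and $\mu_t \to \infty$ faster than $n$), using that the dominant overlap term corresponds to two $K^r_t$'s sharing the most edges; the constant-$t$ assumption keeps the number of overlap types bounded. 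Then union-bounding over all $\binom{n}{k}$ choices of $W$ shows \textbf{whp} every $k$-set spans at least $\mu_t/2$ copies of $K^r_t$. For the upper bound on the number of $K^r_{t+1}$'s inside a $k$-set, a first-moment / Markov argument is too weak after union-bounding, so I would instead use a moments bound in the spirit of Lemma~\ref{lemma:max-number-of-copies} (referenced in the outline): \textbf{whp}, simultaneously over all $k$-sets $W$, the number of copies of $K^r_{t+1}$ in $H_0[W]$ is at most, say, $2\mu_{t+1} + n^{o(1)}$ (or some explicit bound that is still $o(\mu_t)$). Combining: in any $k$-set, the number of edges deleted is at most the number of $K^r_{t+1}$'s, which is $o(\mu_t)$, so at least $\mu_t/2 - o(\mu_t) > 0$ copies of $K^r_t$ survive in $H[W]$, while $H$ is globally $K^r_{t+1}$-free.

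The main obstacle I expect is the upper bound on the number of $K^r_{t+1}$-copies inside \emph{every} $k$-set simultaneously: a plain expectation bound loses against the $\binom{n}{k} \approx e^{\Theta(n)}$ union bound, so one needs the sharper large-deviation control of the upper tail of a subhypergraph count — this is precisely why the referenced Lemma~\ref{lemma:max-number-of-copies} (a moments-method estimate on the maximum number of copies of a fixed hypergraph over all subsets of given size) is invoked, and getting its parameters to interact correctly with the delicate exponent of $k$ in the hypothesis is the crux. A secondary technical point is bookkeeping the overlap sum $\Delta$ in Janson's inequality and confirming that the stated lower bound on $\rho$ is exactly what makes $\Delta = o(\mu_t^2)$; this is routine but must be done carefully since the exponents $\frac{t+1-r}{\binom{t+1}{r}-1}$ and $\frac{\binom{t}{r}(t+1-r)}{(\binom{t+1}{r}-1)(t-1)}$ are chosen to make both inequalities tight up to constants.
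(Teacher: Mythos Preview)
There is a genuine gap. When you delete one edge per copy of $K^r_{t+1}$ in $H_0$, the edges deleted \emph{inside} a fixed $k$-set $W$ are not controlled by the number of $K^r_{t+1}$'s \emph{inside} $W$: an edge $e\subseteq W$ may be deleted because of a $K^r_{t+1}$ that has only $r$ vertices in $W$ and its remaining $t+1-r$ vertices anywhere in $[n]$. The expected number of such ``overhanging'' configurations picks up a factor $n^{t+1-r}$ rather than $k^{t+1-r}$, and for $\rho = c_1 n^{-(t+1-r)/(\binom{t+1}{r}-1)}$ one computes $n^{t+1-r}\rho^{\binom{t+1}{r}-1}=\Theta(1)$. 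Hence the expected number of pairs $(\text{$K^r_t$ in }W,\ \text{$K^r_{t+1}$ in }G\text{ sharing an edge with it})$ is of the \emph{same order} as $\mu_t$, not $o(\mu_t)$; your inequality ``deleted edges in $W$ $=o(\mu_t)$'' fails. (There is a second slip: even if few edges were deleted in $W$, each deleted edge may lie in up to $\binom{k-r}{t-r}$ copies of $K^r_t$, so ``$\mu_t/2 - o(\mu_t)$ copies survive'' does not follow from an edge count alone.)

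The paper's proof avoids both issues by following Krivelevich more closely. It deletes \emph{all} edges of a maximal \emph{edge-disjoint} family $\mathcal{H}$ of $K^r_{t+1}$'s; for a $k$-set $S$ it compares $X_S$, the maximum number of edge-disjoint $K^r_t$'s in $G[S]$, with $Z_S$, the maximum number of edge-disjoint ``bad'' hypergraphs consisting of a $K^r_t$ inside $S$ glued along at least $r$ vertices to a $K^r_{t+1}$ anywhere in $G$. An auxiliary-graph argument shows that if $X_S>\lambda Z_S$ (for an explicit constant $\lambda$), some $K^r_t$ in $S$ survives the deletion. The lower tail of $X_S$ is handled by a Janson bound on the number of edge-disjoint $K^r_t$'s, and the upper tail of $Z_S$ by the disjoint-family bound $\Pr(Z_S\ge s)\le \mu_Y^s/s!$. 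Note that Lemma~\ref{lemma:max-number-of-copies - outline} is not the tool you want here: it bounds copies of $K^r_t$ through two fixed vertices and is used elsewhere in the saturation argument, not for the upper tail of $K^r_{t+1}$ counts inside $k$-sets.
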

The proof of Lemma \ref{lm:good_subhypergraph - outline} follows, in general, ideas present in \cite{K95}, utilising Janson's inequality.

We also require the following lemma, which shows that for $\rho$ chosen as in that in Lemma \ref{lm:good_subhypergraph - outline}, typically every two vertices $u$ and $v$ are not both contained in `many' copies of $K_t^r$ in $G^r(n,\rho)$. This lemma is a new ingredient in the proof --- this was not needed in the graph setting of \cite{KS17}, but is necessary in the hypergraph setting.
\begin{lemma}\label{lemma:max-number-of-copies - outline}
    Let $r \ge 3$ and $t \ge 4$ satisfying $t\geq r$ be integers. Let $c > 0$ be a constant and let $p = c n^{-(t + 1 - r)/\left(\binom{t + 1}{r} - 1\right)}$. Then, \textbf{whp}, for every two vertices $u$ and $v$, the number of copies of $K^r_t$ in $G^{r}(n,\rho)$, which contains $u$ and $v$, is at most $2\binom{n}{t - 2} \rho^{\binom{t}{r}}$.
\end{lemma}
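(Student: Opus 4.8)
The plan is to estimate, for a fixed pair of vertices $u,v$, the expected number of copies of $K^r_t$ in $G^r(n,\rho)$ containing both $u$ and $v$, and then prove concentration via a second moment / Janson-type argument, followed by a union bound over the $\binom{n}{2}$ pairs. The expected number of such copies is $\binom{n-2}{t-2}\rho^{\binom{t}{r}}$, since a copy of $K^r_t$ through $u$ and $v$ is determined by choosing the other $t-2$ vertices and all $\binom{t}{r}$ edges must be present. With $\rho = cn^{-(t+1-r)/(\binom{t+1}{r}-1)}$ one checks that $\binom{n}{t-2}\rho^{\binom{t}{r}}$ is polynomial in $n$ with a positive exponent (this uses $t\ge r$, and the exponent being positive is where $t\ge 4$ and $t\ge r$ come in — for smaller $t$ the expectation would be too small for a union bound to work); call this quantity $\mu$, so we want to show the count is at most $2\mu$ whp for every pair.

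First I would set up, for a fixed pair $u,v$, the random variable $X_{uv}$ counting copies of $K^r_t$ containing $u$ and $v$, written as a sum of indicators $X_{uv} = \sum_{T} I_T$ over $(t-2)$-subsets $T$ of $V\setminus\{u,v\}$, where $I_T$ is the indicator that all $\binom{t}{r}$ edges of the clique on $T\cup\{u,v\}$ are present. Then $\mathbb{E}[X_{uv}] = \mu$. To get the upper tail $\Pr[X_{uv} \ge 2\mu] = \Pr[X_{uv} \ge (1+1)\mathbb{E}X_{uv}]$ small enough to beat the union bound over $\binom{n}{2}$ pairs, I would apply Janson's inequality for the upper tail (or a Kim–Vu / concentration bound), which requires controlling $\Delta = \sum_{T \cap T' \text{ share an overlapping edge-structure}} \Pr[I_T = I_{T'} = 1]$. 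The dominant contribution to $\Delta$ comes from pairs $T, T'$ sharing few vertices but whose cliques share edges; one splits according to $|T \cap T'| = j$ and the number of shared edges, and checks that $\Delta = O(\mu^2 / n^{\varepsilon})$ for some $\varepsilon > 0$, or at least $\Delta = o(\mu^2 \log n)$, so that the upper-tail probability is $\exp(-\Omega(\mu^2 / \Delta)) = \exp(-\Omega(n^{\varepsilon}))$, which is more than enough to union bound over $n^2$ pairs. Alternatively, since we only need a factor-$2$ bound rather than sharp concentration, a cruder moment argument (bounding the $k$-th moment for $k$ a large constant, or even directly $\Pr[X_{uv}\ge 2\mu] \le \mathbb{E}[X_{uv}]/(2\mu) \cdot (\text{correction})$) could suffice, but Janson is cleaner.

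The main obstacle I anticipate is the overlap analysis in $\Delta$: one must verify that for the specific choice $\rho = cn^{-(t+1-r)/(\binom{t+1}{r}-1)}$, every ``intermediate'' overlap configuration (two $(t-2)$-sets meeting in $j$ vertices with their $K^r_t$-cliques sharing some set of edges) contributes $o(\mu^2)$ to $\Delta$. This reduces to checking a finite collection of inequalities in the exponents of $n$, indexed by $j$ and the subhypergraph of shared edges; the worst case will be the configuration where the two cliques share as many edges as possible relative to the number of shared vertices, i.e., where $T$ and $T'$ overlap in a near-complete way. One has to confirm that the chosen density $\rho$ — which is exactly the threshold-type density making $G^r(n,\rho)$ have a bounded number of $K^r_{t+1}$'s — keeps all these overlap terms subdominant; this is routine but fiddly, and is precisely the kind of ``$2$-balancedness''-style computation that is the heart of this lemma. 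Once $\Delta = o(\mu^2/\log n)$ is established, the upper-tail bound and union bound finish the proof.
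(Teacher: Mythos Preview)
Your overall strategy --- fix a pair $(u,v)$, prove an upper-tail bound for the number $X_{uv}$ of $K^r_t$-copies through $u,v$, then union bound over the $\binom{n}{2}$ pairs --- matches the paper's. The gap is in how you propose to obtain the upper tail.

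The bound $\Pr[X_{uv}\ge 2\mu]\le\exp(-\Omega(\mu^2/\Delta))$ you invoke is the form of Janson's \emph{lower}-tail inequality, not an upper-tail bound. Upper tails for subgraph counts are notoriously harder (the ``infamous upper tail'' problem), and the available inequalities (Kim--Vu, Janson--Oleszkiewicz--Ruci\'nski, etc.) do not in general give bounds of that shape; you would have to argue carefully that one of them applies here with an exponent strong enough to beat $n^{-2}$, and you have not done so. Your fallback --- a $k$-th moment bound for a large \emph{constant} $k$ --- is also insufficient: even if $\mathbb{E}[X_{uv}^k]=(1+o(1))^k\mu^k$, Markov yields only $\Pr[X_{uv}\ge 2\mu]\le(1/2+o(1))^k$, a constant, which cannot survive a union bound over $n^2$ pairs.

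The paper closes this gap by taking $L=K\log n$ moments (not constant) and proving $\mathbb{E}[X_{uv}^L]\le((1+o(1))\mu)^L$ with the $o(1)$ uniform over this range of $L$. The key step is to expand $\mathbb{E}[X^L]$ inductively and show that, after conditioning on any hypergraph $H$ with at most $Lt$ edges being present, one still has $\mathbb{E}[X\mid H\subset G]=(1+o(1))\mathbb{E}[X]$: the lower bound is FKG, and the upper bound is precisely an overlap computation of the kind you anticipated (checking, for each $j\in\{r,\ldots,t\}$, that the contribution from cliques sharing $j$ vertices with $H$ is $o(\mu)$, which reduces to the exponent inequality $\binom{j}{r}(t+1-r)/(\binom{t+1}{r}-1)<j-2$). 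Markov then gives $\Pr[X_{uv}\ge 2\mu]\le(1/2+o(1))^{K\log n}\ll n^{-2}$. So your instinct that the heart of the lemma is a finite family of overlap inequalities is correct, but that analysis feeds into a high-moment Markov argument rather than a Janson-type tail bound.
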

\begin{proof}
    Fix two vertices $u$ and $v$. Denote by $X$ the number of copies of $K_t^r$ in $G \sim G^{r}(n,\rho)$ which contain $u$ and $v$. By the union bound over all the choices of $u$ and $v$, it suffices to prove that
    \[
        \mathbb{P}\left(X > 2\binom{n}{t-2} p^{\binom{t}{r}}\right) \ll n^{-2}.
    \]    

    Set $L = K \log n$ where $K$ is a sufficiently large constant. For every subset $T \subset V(G)$ of size $t$, denote by $Y_T$ the indicator random variable of the event $G[T] \cong K_t^r$. Below, we consider only those $t$-subsets of $V(G)$ that contain $u$ and $v$ (in particular, sum up over such sets in the computation of the $L$-th moment of $X$ below). We have
    \begin{align}
        \mathbb{E}[X^L] &= \sum_{\substack{T_1, \ldots, T_{L} \subset V(G) \\ |T_1| = \ldots = |T_{L}| = t}} \mathbb{E}[Y_{T_1} \cdot \ldots \cdot Y_{T_{L}}]\notag \\
        &= \sum_{\substack{T_1, \ldots, T_{L-1} \subset V(G) \\ |T_1| = \ldots = |T_{L-1}| = t}} \mathbb{E}[Y_{T_1} \cdot \ldots \cdot Y_{T_{L-1}}] \sum_{T_L \subset V(G), |T_L| = t} \mathbb{E}[Y_{T_L} \mid Y_{T_1} = \ldots = Y_{T_{L-1}} = 1]\notag \\
        &= \sum_{\substack{T_1, \ldots, T_{L-1} \subset V(G) \\ |T_1| = \ldots = |T_{L-1}| = t}} \mathbb{E}[Y_{T_1} \cdot \ldots \cdot Y_{T_{L-1}}] \cdot \mathbb{E}[X \mid Y_{T_1} = \ldots = Y_{T_{L-1}} = 1].
    \label{eq:Lth_moment}
    \end{align}

    Let $H$ be a hypergraph with $e(H) \le L \cdot t$. By FKG inequality, $\mathbb{E}[X \mid H \subset G]\geq\mathbb{E}X$.  On the other hand,
    \begin{align*}
        \mathbb{E}[X \mid H \subset G] \le \binom{n}{t-2} \rho^{\binom{t}{r}} + \sum_{j = r}^{t} e(H)^{\binom{j}{r}} n^{t - j} \rho^{\binom{t}{r} - \binom{j}{r}}.
    \end{align*}
    The first summand in the right-hand side above is an upper bound to the expectation of the number of copies of $K_t^r$ which does not share any edges with $H$ while the second summand is an upper bound to the expectation of the number of copies of $K_t^r$ which does share at least one edge with $H$. The index $j$ indicates how many vertices in the copy of $K_t^r$ belongs to $H$. So we have at most $\binom{j}{r}$ edges which are already in $H$. The term $e(H)^{\binom{j}{r}}$ is an upper bound to the number of choices of the common $j$ vertices, $n^{t-j}$ is an upper bound to the number of choices of the remaining $t - j$ vertices and $p^{\binom{t}{r} - \binom{j}{r}}$ is an upper bound to the probability that this copy appears in $G$ conditioned on $H \subseteq G$. We will show that the second summand on the right-hand side of the inequality is asymptotically smaller than 
    \begin{equation}
    \mathbb{E}[X]=(1+o(1))\binom{n}{t-2}\rho^{\binom{t}{r}},
    \label{eq:expect_rooted_cliques}
    \end{equation}
    implying $\mathbb{E}[X\mid H\subset G]\leq(1+o(1))\mathbb{E}[X]$. Note that, for every $j = r, r+1, \ldots, t$, we have
    \begin{align*}
        \frac{n^{t - j} \rho^{\binom{t}{r} - \binom{j}{r}}}{n^{t-2} \rho^{\binom{t}{r}}} = n^{2 - j} \rho^{-\binom{j}{r}} = \Theta\left(n^{2 - j + \frac{\binom{j}{r}(t+1-r)}{\binom{t+1}{r}-1}}\right).
    \end{align*}
    Thus, it is sufficient to show that the last expression tends to 0 as $n\to\infty$, which is true if
    \[
        \frac{\binom{j}{r}(t+1-r)}{\binom{t+1}{r}-1} < j - 2.
    \]

    To show the latter, we will use the following claim.

    \begin{claim}\label{claim:t-condition1}
        $\frac{\binom{t}{r} (t + 1 - r)}{\left(\binom{t+1}{r} - 1\right)(t-1)} < 1 - \frac{r - 1}{t}$.     
    \end{claim}
    \begin{proof}
        We have
        \begin{align*}
            \frac{\binom{t}{r} (t + 1 - r)}{\left(\binom{t+1}{r} - 1\right)(t-1)} &< \frac{\binom{t}{r}}{\binom{t+1}{r} - 1} = \frac{\binom{t}{r}}{\binom{t}{r} + \binom{t}{r - 1} - 1} \\ &= \frac{\binom{t}{r}}{\binom{t}{r} + \binom{t}{r} \cdot \frac{r}{t-r+1}  - 1} = \frac{1}{1 + \frac{r}{t-r+1} - \frac{1}{\binom{t}{r}}}\\
            &\le 1 - \frac{r}{t-r+1} + \frac{1}{\binom{t}{r}} \leq 1 - \frac{r}{t - r + 1} + \frac{1}{t - r + 1} \\
            &=1-\frac{r-1}{t-r+1}< 1 - \frac{r - 1}{t}.
        \end{align*}        
    \end{proof}
    
    By Claim \ref{claim:t-condition1},
    \begin{align*}
        \frac{\binom{j}{r}(t+1-r)}{\binom{t+1}{r}-1} &< \frac{\binom{j}{r}}{\binom{t}{r}} \left(1 - \frac{r - 1}{t}\right)(t-1) \le \frac{\binom{j}{r}}{\binom{t}{r}} (t-2)=
        \frac{j(j-1)\ldots(j-r+1)}{t(t-1)\ldots(t-r+1)}(t-2) \\&\stackrel{r\geq 3} \leq j-2,
    \end{align*}
    as needed.
     
    Thus, we conclude that $\mathbb{E}[X\mid H\subset G]=(1+o(1))\mathbb{E}[X]$ uniformly over all $H$ with $e(H)\leq L\cdot t$.
    By induction, from~\eqref{eq:Lth_moment}, we have
    \[
        \mathbb{E}[X^L] = \biggl((1 + o(1))\mathbb{E}[X]\biggr)^L.
    \]
    From~\eqref{eq:expect_rooted_cliques}, by Markov's inequality, we get
    \begin{align*}
        \mathbb{P}\left(X > 2\binom{n}{t-2} \rho^{\binom{t}{r}}\right) \le \frac{\mathbb{E}[X^L]}{((2+o(1))\mathbb{E}[X])^L} = \left(\frac{(1 + o(1))\mathbb{E}[X]}{2\mathbb{E}[X]}\right)^L = (1/2+o(1))^L \ll n^{-2},
    \end{align*}
    where the last asymptotical inequality is true if $K$ is sufficiently large.
\end{proof}

With these two lemmas at hand, we can now describe our construction. We say that an edge $e\in E(G)\setminus E(H)$ can be \textit{completed} if it closes a copy of $K_s^r$ in $H\cup\{e\}$. Recall that we aim to construct a $K_s^r$-free subhypergraph $H\subseteq G$ with $(1+o(1))\binom{n}{r-1}\log_{\frac{1}{1-p^{r-1}}}n$ edges, such that every edge $e\in E(G)\setminus E(H)$ can be completed. Throughout the proof, we make use of the following observation, already appearing in \cite{KS17}.
\begin{observation}\label{obs: small amount of edges}
It suffices to construct a $K_s^r$-free graph that completes all but $o(n^{r-1}\log n)$ edges. Indeed, by adding each of the uncompleted edges to $H$, if necessary, we obtain a $K_s^r$-saturated graph with an asymptotically equal number of edges.
\end{observation}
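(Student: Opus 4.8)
The plan is to prove Observation \ref{obs: small amount of edges}, which reduces the construction of a $K_s^r$-saturated subhypergraph of $G$ to the construction of a $K_s^r$-free subhypergraph that completes \emph{most} edges. Concretely, suppose we have a $K_s^r$-free subhypergraph $H_0\subseteq G$ with at most $(1+o(1))\binom{n}{r-1}\log_{\frac{1}{1-p^{r-1}}}n$ edges such that the set $U$ of edges in $E(G)\setminus E(H_0)$ that \emph{cannot} be completed satisfies $|U|=o(n^{r-1}\log n)$. I would then simply form $H=H_0\cup U$ by adding every uncompleted edge back to $H_0$.

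First I would check that $H$ is still $K_s^r$-free. This is where the main (and really only) subtlety lies: adding edges could in principle create a copy of $K_s^r$. The resolution is to add the edges of $U$ \emph{one at a time}, in any order, and observe that by definition each edge $e\in U$ cannot be completed, i.e.\ $H_0\cup\{e\}$ contains no copy of $K_s^r$ through $e$; since $H_0$ is $K_s^r$-free, $H_0\cup\{e\}$ is $K_s^r$-free. However, this only handles adding a single edge, so one must argue inductively: if at some stage the current hypergraph $H'\supseteq H_0$ is $K_s^r$-free and we add $e\in U$, any new copy of $K_s^r$ would have to use $e$, but a copy of $K_s^r$ through $e$ is already contained in $G$ restricted to its vertex set, and in particular in $H_0\cup\{e\}$ (since all $\binom{s}{r}$ relevant edges lie in $E(G)$ and all but possibly $e$ would need to be present)—wait, this last point is exactly why ``cannot be completed'' is defined relative to $H_0$, not relative to $H'$. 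So more care is needed: I would instead use that $e\in U$ means $e$ does not lie in a copy of $K_s^r$ in $H_0\cup\{e\}$, but $e$ \emph{might} lie in such a copy in $H'\cup\{e\}$ if $H'$ contains extra edges of $U$. To sidestep this, note that the cleanest route is: the final $H=H_0\cup U$ is $K_s^r$-free because any copy of $K_s^r$ in $H$ uses at least one edge of $U$ (as $H_0$ is $K_s^r$-free), say $e$; but then all other $\binom{s}{r}-1$ edges of that copy lie in $H$, hence in $G$, so this copy witnesses that $e$ \emph{can} be completed using only the edges of $H$ other than $e$—and in fact the relevant point from \cite{KS17} is that in their construction the completing copies are found within a fixed small ``gadget'' subhypergraph disjoint from $U$, so adding $U$ back does not interfere; I would phrase the observation so that it references this structural feature, or alternatively define ``can be completed'' with respect to the \emph{final} $H$ throughout.

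Next I would verify the edge count: $e(H)=e(H_0)+|U|\le (1+o(1))\binom{n}{r-1}\log_{\frac{1}{1-p^{r-1}}}n + o(n^{r-1}\log n)=(1+o(1))\binom{n}{r-1}\log_{\frac{1}{1-p^{r-1}}}n$, since $\log n=\Theta\!\big(\log_{\frac{1}{1-p^{r-1}}}n\big)$ for constant $p$. Finally I would check that $H$ is $K_s^r$-saturated in $G$: every edge $e\in E(G)\setminus E(H)$ lies in $E(G)\setminus E(H_0)$ and is not in $U$, hence by construction $e$ can be completed, i.e.\ adding $e$ to $H_0\subseteq H$ creates a copy of $K_s^r$; since that copy is already present in $H\cup\{e\}$, the edge $e$ creates a copy of $K_s^r$ when added to $H$. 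Combined with $K_s^r$-freeness of $H$, this gives saturation.

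The main obstacle, as flagged above, is purely the bookkeeping around what ``can be completed'' is measured against—$H_0$ versus the final $H$—and ensuring the $K_s^r$-freeness is not destroyed when the uncompleted edges are reinserted. The honest fix is to state the observation so that ``completed'' always refers to closing a copy of $K_s^r$ using edges of the fixed core gadget of the construction (which is disjoint from the set of potentially-uncompleted edges), so that reinserting $U$ is harmless; this is exactly the setup inherited from \cite{KS17}. Everything else is a one-line triangle-inequality estimate on edge counts and a definitional unwinding of saturation, so no genuine calculation is required.
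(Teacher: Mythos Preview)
You correctly identify the only real issue---whether reinstating the uncompleted edges can create a $K_s^r$---but you then over-engineer the fix. Your plan is to set $H=H_0\cup U$ and argue $K_s^r$-freeness either inductively or by appealing to the specific ``gadget'' structure of the construction. The inductive route, as you yourself notice, does not work cleanly (an edge $e\in U$ is uncompletable relative to $H_0$, not relative to $H_0$ together with earlier edges of $U$), and the gadget route makes the observation construction-dependent rather than general.

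The paper's phrase ``if necessary'' encodes a much simpler and fully general argument: greedily extend $H_0$ to a \emph{maximal} $K_s^r$-free subhypergraph $H\subseteq G$ by adding edges of $E(G)\setminus E(H_0)$ one at a time, adding an edge only when doing so keeps the hypergraph $K_s^r$-free. Maximality immediately gives saturation. The edge count is controlled because any edge $e\notin U$ already closes a copy of $K_s^r$ with $H_0$, hence also with any supergraph of $H_0$, so such an $e$ is never added; consequently every edge added in the greedy process lies in $U$, and $e(H)\le e(H_0)+|U|=(1+o(1))\binom{n}{r-1}\log_{\frac{1}{1-p^{r-1}}}n$. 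No structural assumption on $H_0$ beyond $K_s^r$-freeness is needed, and the $K_s^r$-freeness of $H$ is automatic by construction rather than something to be verified after the fact.
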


We set $\alpha=(1-p^{r-1})^{-1}$ and $\beta=(1-p^{\binom{s}{r}-\binom{s-r}{r}-1})^{-1}$. Throughout the rest of this section, unless explicitly stated otherwise, the base of the logarithms is $\alpha$. We then set aside three disjoint subsets of $[n]$, $A_1$, $A_2$, and $A_3$, of sizes $a_1\coloneqq \frac{1}{p}\log n\left(1+\frac{3}{\log\log n}\right)$, $a_2\coloneqq sr\log_{\beta}n$, and $a_3=\frac{a_2}{\log^{1/r}a_2}$, respectively. We let $B=[n]\setminus (A_1\cup A_2\cup A_3)$, shorthand $t=s-r$, and recall that $G\sim G^r(n,p)$.

We now define $H$ to be a subhypergraph of $G$ with the following edges:
\begin{itemize}
    \item all edges of $G$ intersecting both $A_1$ and $B$ with at most $t$ vertices from $A_1$; and,
    \item if $t\ge r$, we take $H[A_1]\subsetneq G[A_1]$ to be $K_{t+1}^r$-free, such that there exists a copy of $K_t^r$ in every induced subhypergraph of $H[A_1]$ of size at least $c_0a_1^{\frac{\binom{t}{r}(t+1-r)}{\left(\binom{t+1}{r}-1\right)(t-1)}}\log^{1/(t-1)}a_1$. Moreover, every two vertices in $A_1$ are contained in at most $2\binom{a_1}{t-2}\left(c_1a_1\right)^{-\frac{t+1-r}{\binom{t+1}{r}-1}}$ copies of $K_t^r$ in $H[A_1]$. Otherwise, if $t<r$, we take $H[A_1]$ to be the empty hypergraph.
\end{itemize}
Note that, \textbf{whp}, by Lemmas \ref{lm:good_subhypergraph - outline} and \ref{lemma:max-number-of-copies - outline} the subhypergraph described in the second bullet exists. Furthermore, the number of edges we have in $H$ now is indeed $(1+o(1))\binom{n}{r-1}\log_{\frac{1}{1-p^{r-1}}}n$. Moreover, since $H[A_1]$ is $K_{t+1}^r$-free, if $t+1\ge r$ we see that $H$ is $K_s^r$-free. If $t+1\le r-1$, we note that we do not have edges intersecting both $A_1$ and $B$ with at least $t+1$ vertices in $A_1$, and thus $H$ is $K_s^r$-free.

Given an $(r-1)$-set of vertices $S\subseteq [n]$, the neighbourhood of $S$ in $X\subseteq [n]$ is given by $N_X(S)\coloneqq \left\{v\in X\colon\{v\}\cup S\in E(G)\right\}$. Given an $(r-1)$-set of vertices $S\subseteq B$, we say that $S$ is \textit{good} if $|N_{A_1}(S)|\ge pa_1-\frac{\log n}{\log\log n}$, and otherwise we say that $S$ is \textit{bad}. Finally, we say that an edge $e\in G[B]$ is good if there exists at least one good $(r-1)$-subset $S\subseteq e$, and otherwise say that the edge $e$ is \textit{bad}. 

We first utilise the subhypergraph in $H[A_1]$ in order to activate almost all the good edges $e\in E(G[B])$. Observe that good edges $e$ have some $S\subseteq e$, with $|S|=r-1$ and such that $S$ has a large neighbourhood in $A_1$. Fix an $r$-subset $e\subseteq B$. Note that given $S\subseteq e$ with $|S|=r-1$ and its neighbourhood in $A_1$, we have that 
\begin{align*}
    N_{S,e}\coloneqq \left|\left\{v\in N_{A_1}(S)\colon \{v\}\cup S'\in E(G)\text{ for all }S'\subseteq e, |S'|=r-1\right\}\right|\sim Bin(|N_{A_1}(S)|,p^{r-1}).
\end{align*}
By the choice of $H[A_1]$, we can then expose the edges of the form $S'\cup \{v\}$ in $G$, where $S'\subseteq e$ and $v\in N_{A_1}(S)$, and then using Chernoff's bound find in $N_{S,e}$ many copies of $K_t^r$. Now, note that given two copies of $K_t^r$ in $H[N_{S,e}]$ the events that each of them closes a copy of $K_s^r$ with $e$ are dependent only if these copies intersect in two vertices. To bound these dependencies, we utilise the fact that in $H[A_1]$ every two vertices do not have many copies of $K_t^r$ sharing them. This, together with Janson's inequality, allows us to show that \textbf{whp}, for all but $o(n^{r-1}\log n)$ good edges in $e\in E(G[B])$, there is at least one copy of $K_t^r$ that closes a copy of $K_s^r$ with $e$. Note that by Observation \ref{obs: small amount of edges}, this in fact completes our treatment for all the good edges in $G[B]$.

Before turning to bad edges in $G[B]$, let us discuss the difference in the treatment of good edges above from the setting of graphs in \cite{KS17}. Indeed, in the case of graphs, a key part of the argument for the upper bound is to consider vertices $v$ with a large neighbourhood in $A_1$. Then, fixing $v$ and considering edges $uv\in E(G[B]))$, it suffices to find a copy of $K_{s-2}^2$ in $N_v \cap N_u$ -- whose size is distributed according to $Bin(|N_v|,p)$. On the other hand, when considering hypergraphs, we consider instead an $(r-1)$-set $S\subseteq e$ with a substantial neighbourhood in $A_1$, $N_{S,e}$. Then, when considering edges $\{u\}\cup S\in E(G[B])$, it no longer suffices to find a copy of $K_{s-r}^r$ in $\Tilde{N}_{S, u} = \bigcap_{S' \subset S \cup \{u\}, |S'| = r - 1} N_{S',e}$. Indeed, one needs to further consider edges with $1\le \ell < r-1$ vertices from $S \cup \{u\}$, and $r-\ell$ vertices from $\Tilde{N}_{S, u}$. As noted above, this further creates possible dependencies which do not appear in the case of graphs, and, in particular, this is why Lemma \ref{lemma:max-number-of-copies - outline} is important to us in the hypergraph setting.

We now turn to bad edges in $G[B]$. As there is a non-negligible portion of them, we require the set $A_2$ to complete them. By Lemma \ref{lm:good_subhypergraph - outline} there exists a subhypergraph $H_2\subseteq G[A_2]$ which is $K_{t+1}^r$-free and every large enough subset in it induces a copy of $K_t^r$, when $t\ge r$. Moreover, there are at least $r\log_{\beta}n$ vertex-disjoint copies of $K_t^r$ in $H_2[A_2]$. We can thus add to $H$ the edges of $H_2[A_2]$, edges intersecting both $A_2$ and $B$ in $G$ with at least two and at most $t$ vertices from $A_2$, and edges of the form $S\cup \{v\}$ where $S\subseteq B$ is bad and $v\in A_2$. The first two are of order $o(n^{r-1}\log n)$, and using Chernoff's bound, one can show that there are at most $\frac{n^{r-1}}{\log n}$ bad $(r-1)$-subsets $S\subseteq B$, and thus the last set of edges is also of order $o(n^{r-1}\log n)$. Similarly to before, $H$ is still $K_s^r$-free. Using Markov's inequality, \textbf{whp} we can now close all bad edges in $G[B]$ (recall that a bad edge has that all its $(r-1)$-subsets are bad).

Note that we have not added any edge to $H$ intersecting both $B$ and $A_2$, whose intersection with $B$ is a good $(r-1)$-subset. As there are $\Omega(n^{r-1})$ good $(r-1)$-subsets in $B$, and $\Omega(\log n)$ vertices in $A_2$, we cannot ignore these edges, and this is the reason for setting aside the set $A_3$. Once again, we find in $G[A_3]$ a subhypergraph $H_3$ which is $K_{t+1}^r$-free, and every large enough subset in it induces a copy of $K_t^r$, when $t\ge r$. 

The choice of edges which we add to $H$ now is slightly more delicate. We add to $H$ the edges of $H_3$, all edges intersecting both $B$ and $A_3$ with at least two and at most $t$ vertices from $A_3$, edges of the form $S\cup \{v\}$ where $S\subseteq B$ is good and $v\in A_3$, and edges intersecting both $B\cup A_2$ and $A_3$ with one vertex from $A_2$ and at least one and at most $t$ vertices from $A_3$. This careful choice of edges allows us to show that the hypergraph $H$ is still $K_s^r$-free. As the argument here is slightly more delicate, let us state it explicitly. Since there are no edges intersecting both $A_1$ and $A_2\cup A_3$, it suffices to consider $X\subseteq B\cup A_{2}\cup A_{3}$ of size $s$, and suppose towards contradiction that $X$ induces a clique in $H$. Since $H[B]$ is an empty hypergraph, we must have that $|X\cap B|\le r-1$. Since we only added edges intersecting $A_3$, we may assume that $X$ contains vertices from $A_3$. If $|X\cap A_3|\ge t+1$, we are done by similar arguments to before. Furthermore, if there are at least two vertices in $A_2\cap X$, note that there are no edges in $H$ containing two vertices from $A_2$ and at least one vertex from $A_3$, leading to a contradiction. Thus, we are left with the case where $|X\cap A_{2}|=1, |X\cap A_{3}|=t, |X\cap B|=r-1$. Now, if $X\cap B$ is bad, there are no edges of the form $(X\cap B)\cup \{v\}$, $\{v\}\in A_3$, in $H$, and if it is good, then there are no edges of the form $(X\cap B)\cup \{v\}$, $\{v\}\in A_2$, in $H$ --- either way, we get that $X$ cannot induce a clique. Let us note that this last part of the argument is also quite different from the argument in the setting of graphs. Indeed, in \cite{KS17}, to ensure that $H$ remains $K_s^2$-free, one needs to consider a partition of $A_2$ into independent sets, and partition $A_3$ accordingly. Here, adding edges intersecting both $B\cup A_2$ and $A_3$ which intersect $A_2$ in at most one vertex suffices to show that $H$ remains $K_s^r$-free.

Furthermore, due to the size of $A_3$, the number of edges we added at this stage is of order $o(n^{r-1}\log n)$. Using similar arguments to before, we can show that we can activate all but $o(n^{r-1}\log n)$ of the edges of the form $S\cup \{v\}$ where $S\subseteq B$ is good and $v\in A_2$. Again, by Observation \ref{obs: small amount of edges}, this completes our treatment for all edges of the form $S\cup\{v\}$ where $S\subseteq B$ is good and $v\in A_2$.

Finally, note that we have not completed all the edges: we did not complete edges of the form $S\cup \{v\}$ where $S\subseteq B$ is bad and $v\in A_3$ as well as some of the edges induced by $A_1\cup A_2\cup A_3$. Furthermore, if $t<r$, we have not completed edges instersecting both $B$ and $A_1\cup A_2\cup A_3$ with more than $t$ vertices in either $A_1$, $A_2$, or $A_3$. However, there are at most $o(n^{r-1}\log n)$ edges of these types, and we are thus done by Observation \ref{obs: small amount of edges}.

\bibliographystyle{abbrv}
\bibliography{sat}

\appendix
\section{Proof of the upper bound of Theorem \ref{th: wsat}\ref{i: strong}}
\subsection{Preliminaries}\label{section:prem}
Let us first state (several versions of) Janson's inequality, which we will use later on. In some cases, we will need to bound the probability of the existence of many edge-disjoint copies of some graph in $G(n,p)$. Janson's inequality is a well-known tool, frequently utilised in such problems.  

Let $\Omega$ be a finite universal set and let $R$ be a random subset of $\Omega$ containing every element $r \in \Omega$ with probability $p_r$, independently. Let $I$ be a finite set of indices, and let $\{A_i\}_{i \in I}$ be subsets of $\Omega$. For every $i \in I$, let $X_i$ be the indicator random variable of the event that $B_i \subset R$ and let $X = \sum_{i \in I} X_i$. For $i, j \in I$, we write $i \sim j$ if $A_i \cap A_j \neq \varnothing$. Define
\begin{align*}
    \mu &\coloneqq \mathbb{E}[X] = \sum_{i \in I} \Pr(X_i = 1) \quad \text{and}\\
    \Delta &\coloneqq \sum_{i \sim j} \Pr(X_i = X_j = 1),
\end{align*}
where the sum in $\Delta$ goes over all ordered pairs $i, j \in I$ such that $i \sim j$.

\begin{thm}[{Janson's inequality~\cite{janson1990exponential}}]\label{theorem:janson}
  $\Pr(X = 0) \le e^{-\mu^2 / 2\Delta}$.
\end{thm}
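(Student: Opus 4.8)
The plan is to prove Janson's inequality in two stages: first establish the classical ``basic'' form $\Pr(X=0)\le e^{-\mu+\Delta_0/2}$, where $\Delta_0\coloneqq\sum_{i\sim j,\ i\ne j}\Pr(X_i=X_j=1)$ is the off-diagonal part of $\Delta$ (so that, since $A_i\cap A_i\neq\varnothing$ contributes the diagonal terms, $\Delta=\mu+\Delta_0$), and then upgrade it to the stated bound by a random-thinning argument.

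For the basic form, order $I=\{1,\dots,m\}$ and write the telescoping product
\[
\Pr(X=0)=\prod_{i=1}^m\Pr\!\Big(X_i=0\ \Big|\ X_j=0\ \forall j<i\Big)=\prod_{i=1}^m\Big(1-\Pr(X_i=1\mid E_i)\Big),
\]
where $E_i\coloneqq\bigwedge_{j<i}\{X_j=0\}$. The key step is a lower bound on $\Pr(X_i=1\mid E_i)$. Split $\{1,\dots,i-1\}$ into $D=\{j<i:A_j\cap A_i\ne\varnothing\}$ and its complement $D^c$, and set $F=\bigwedge_{j\in D^c}\{X_j=0\}$. Since $F$ depends only on the coordinates in $\bigcup_{j\in D^c}A_j$, which is disjoint from $A_i$, the event $\{X_i=1\}$ is independent of $F$, so $\Pr(X_i=1\mid F)=\Pr(X_i=1)$; and by the FKG (Harris) inequality, for each $j\in D$ the increasing event $\{X_i=X_j=1\}$ is negatively correlated with the decreasing event $F$, so $\Pr(X_i=X_j=1\mid F)\le\Pr(X_i=X_j=1)$. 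Using $\Pr(X_i=1\mid E_i)\ge\Pr\big(\{X_i=1\}\wedge\bigwedge_{j\in D}\{X_j=0\}\ \big|\ F\big)\ge\Pr(X_i=1\mid F)-\sum_{j\in D}\Pr(X_i=X_j=1\mid F)$ together with the two facts above yields $\Pr(X_i=1\mid E_i)\ge\Pr(X_i=1)-\sum_{j<i,\ j\sim i}\Pr(X_i=X_j=1)$. Substituting into the product and applying $1-x\le e^{-x}$ gives $\Pr(X=0)\le\exp\big(-\mu+\tfrac12\Delta_0\big)$.

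For the upgrade, let $T\subseteq I$ be a random subset obtained by retaining each index independently with probability $q\in[0,1]$, chosen independently of $R$. For every fixed realisation $T=t$, the basic form applied to the subfamily $\{X_i\}_{i\in t}$ gives $\Pr(X=0)\le\Pr\big(\bigwedge_{i\in t}\{X_i=0\}\big)\le\exp\big(-\mu_t+\tfrac12\Delta_{0,t}\big)$, where $\mu_t=\sum_{i\in t}\Pr(X_i=1)$ and $\Delta_{0,t}$ is the corresponding off-diagonal sum over $t$. Since the left-hand side does not depend on $t$, taking logarithms and then the expectation over $T$ (using $\mathbb{E}[\mu_T]=q\mu$ and $\mathbb{E}[\Delta_{0,T}]=q^2\Delta_0$) yields $\ln\Pr(X=0)\le-q\mu+\tfrac12q^2\Delta_0$ for all $q\in[0,1]$. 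Optimising: if $\mu\le\Delta_0$, take $q=\mu/\Delta_0$ to get $\ln\Pr(X=0)\le-\mu^2/(2\Delta_0)\le-\mu^2/(2\Delta)$; if $\mu>\Delta_0$, take $q=1$ to get $\ln\Pr(X=0)\le-\mu+\tfrac12\Delta_0$, which is at most $-\mu^2/(2\Delta)$ since $\mu^2+\mu\Delta_0-\Delta_0^2\ge\mu^2\ge0$ when $0\le\Delta_0\le\mu$. In either case $\Pr(X=0)\le e^{-\mu^2/(2\Delta)}$.

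The telescoping identity, the inequality $1-x\le e^{-x}$, and the one-variable optimisation are all routine; the single structural ingredient — the step I would flag as the heart of the argument — is the FKG/Harris correlation inequality used to bound $\Pr(X_i=X_j=1\mid F)$, because conditioning on the decreasing event $F$ destroys the independence of the coordinates and only the correlation inequality permits the comparison with the unconditional probability. I would therefore state Harris's inequality for product measures explicitly (or cite a standard reference) before carrying out the computation.
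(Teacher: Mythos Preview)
The paper does not give a proof of this theorem; it is stated with a citation to Janson's 1990 paper and used as a black box throughout the appendix. Your proposal is a correct, standard proof: the basic form $\Pr(X=0)\le e^{-\mu+\Delta_0/2}$ via the Boppana--Spencer conditioning/telescoping argument, followed by the random-thinning upgrade and optimisation in $q$, is exactly the textbook route (see e.g.\ Alon--Spencer, \emph{The Probabilistic Method}). Your observation that the paper's $\Delta$ includes the diagonal terms, so that $\Delta=\mu+\Delta_0$, is correct and is what makes the final case split work. The only step you pass over quickly is why $\Pr(X_i=1\mid E_i)\ge\Pr\big(\{X_i=1\}\wedge\bigwedge_{j\in D}\{X_j=0\}\,\big|\,F\big)$; this is immediate from $\Pr(G\mid F)\le 1$ with $G=\bigwedge_{j\in D}\{X_j=0\}$, but it is worth writing out since it is the point at which the conditioning is decoupled.
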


We also use versions of Janson's inequality (see, for example, \cite{AS16}) which bound the probability of the event that we do not have many disjoint copies of $A_i$'s in $R$. We call a set of indices $J \subseteq I$ a \textit{disjoint family} if for every $i \neq j \in J$ we have $A_i \cap A_j = \varnothing$. Moreover, we call a set of indices $J \subseteq I$ a \textit{maximal disjoint family} if $J$ is a disjoint family and there is no $i \in I \setminus J$ such that $A_i \cap A_j = \varnothing$ for every $j \in J$. For every $s$, denote by $D_s$ and $M_s$ the events that there exists a disjoint family of size $s$ and a maximal disjoint family of size $s$, respectively.

\begin{lemma}\label{lemma:janson-disjoint-family}
    For every integer $s$, 
    $$
        \Pr(D_s) \le \frac{\mu^s}{s!}.
    $$
\end{lemma}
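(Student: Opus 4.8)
The plan is to bound $\Pr(D_s)$ by a direct union bound over all potential disjoint families of size $s$. Recall that $D_s$ is the event that there exists a set of indices $\{i_1, \ldots, i_s\} \subseteq I$ with $A_{i_j} \cap A_{i_k} = \varnothing$ for all $j \neq k$, and such that $A_{i_j} \subseteq R$ for every $j$.

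First I would observe that since the sets $A_{i_1}, \ldots, A_{i_s}$ are pairwise disjoint, the events $\{A_{i_j} \subseteq R\}$ for $j = 1, \ldots, s$ are mutually independent (each depends only on the coordinates of $R$ indexed by $A_{i_j}$, and these index sets are disjoint). Hence for a fixed ordered tuple $(i_1, \ldots, i_s)$ of indices forming a disjoint family,
\[
    \Pr\left(\bigcap_{j=1}^{s} \{A_{i_j} \subseteq R\}\right) = \prod_{j=1}^{s} \Pr(X_{i_j} = 1).
\]
Then I would apply the union bound over all such ordered tuples. Summing $\prod_{j=1}^s \Pr(X_{i_j}=1)$ over all ordered $s$-tuples of indices (dropping the disjointness restriction only enlarges the sum, since all summands are nonnegative) gives exactly $\left(\sum_{i \in I} \Pr(X_i = 1)\right)^s = \mu^s$. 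Finally, since each unordered disjoint family of size $s$ is counted $s!$ times among the ordered tuples, we obtain
\[
    \Pr(D_s) \le \frac{1}{s!} \sum_{\substack{(i_1,\ldots,i_s) \\ \text{disjoint family}}} \prod_{j=1}^{s} \Pr(X_{i_j}=1) \le \frac{\mu^s}{s!},
\]
as required.

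I do not anticipate a genuine obstacle here; the only point requiring a little care is the independence claim, which rests on the fact that a disjoint family indexes pairwise disjoint subsets of $\Omega$, together with the product structure of the measure on $R$. One should also be slightly careful about the bookkeeping between ordered tuples (which may repeat an index — though repeats cannot occur in a genuine disjoint family unless $A_i = \varnothing$, and in any case including them only weakens the bound) and unordered families, but the inequality goes the right way throughout, so the crude counting suffices.
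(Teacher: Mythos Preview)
Your proof is correct and is the standard argument for this fact. The paper does not give its own proof of this lemma, merely stating it as a known result (with a reference to \cite{AS16}); the union-bound-plus-independence argument you give is exactly what one finds in that reference.
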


For a probability bound on the size of a maximal disjoint family, we need another notation. Define
\begin{align*}
    \nu = \max_{j \in I} \sum_{i \sim j} \Pr(X_i = 1).
\end{align*}

\begin{lemma}\label{lemma:janson-max-disjoint}
    For every integer $s$, 
    $$
        \Pr(M_s) \le \frac{\mu^s}{s!} e^{-\mu + s\nu + \Delta/2}.    
    $$
\end{lemma}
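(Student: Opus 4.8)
The plan is the textbook union-bound-over-maximal-families argument; the one genuinely necessary observation is that the ``maximality'' constraint can be isolated on a set of coordinates disjoint from those carrying the ``$s$ present sets'', so that Janson's inequality (Theorem~\ref{theorem:janson}, or its standard exponential form from \cite{AS16}) applies to that part alone. Concretely, for an $s$-element index set $J\subseteq I$ that forms a disjoint family, put $J^+\coloneqq\{k\in I:A_k\cap A_j\neq\varnothing\text{ for some }j\in J\}$, so $J\subseteq J^+$, and let $E_J$ be the event that $X_i=1$ for all $i\in J$ and $X_k=0$ for all $k\in I\setminus J^+$. Then $E_J$ holds precisely when $J$ is a maximal disjoint family of size $s$ among the indices $i$ with $X_i=1$: the first clause says $J$ is such a family, the second that it cannot be extended (any $k\notin J$ whose $A_k$ meets no $A_j$, $j\in J$, must have $X_k=0$). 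Hence $M_s\subseteq\bigcup_J E_J$, the union over all $s$-element disjoint families $J$, and a union bound gives $\Pr(M_s)\le\sum_J\Pr(E_J)$.

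Next I would estimate $\Pr(E_J)$ for a fixed such $J$. Since the $A_i$, $i\in J$, are pairwise disjoint, the events $\{X_i=1\}_{i\in J}$ are mutually independent with $\Pr\bigl(\bigcap_{i\in J}\{X_i=1\}\bigr)=\prod_{i\in J}\Pr(X_i=1)$; and every $k\in I\setminus J^+$ has $A_k$ disjoint from $\bigcup_{j\in J}A_j$, so $\bigcap_{k\notin J^+}\{X_k=0\}$ is determined by the elements of $\Omega\setminus\bigcup_{j\in J}A_j$ only and is therefore independent of $\bigcap_{i\in J}\{X_i=1\}$. Thus $\Pr(E_J)=\prod_{i\in J}\Pr(X_i=1)\cdot\Pr\bigl(\bigcap_{k\notin J^+}\{X_k=0\}\bigr)$. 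Now apply Janson to the sub-collection $\{A_k:k\notin J^+\}$: with $\mu_J\coloneqq\sum_{k\notin J^+}\Pr(X_k=1)$ and $\Delta_J\coloneqq\sum_{k\sim k',\,k,k'\notin J^+}\Pr(X_k=X_{k'}=1)\le\Delta$, the elementary inequality $\mu_J^2/(2\Delta_J)\ge\mu_J-\Delta_J/2$ (which is just $(\mu_J-\Delta_J)^2\ge0$) turns Theorem~\ref{theorem:janson} into $\Pr\bigl(\bigcap_{k\notin J^+}\{X_k=0\}\bigr)\le e^{-\mu_J+\Delta_J/2}\le e^{-\mu_J+\Delta/2}$. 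Finally I would bound $\mu_J$ below via $\mu-\mu_J=\sum_{k\in J^+}\Pr(X_k=1)\le\sum_{j\in J}\sum_{k\sim j}\Pr(X_k=1)\le|J|\,\nu=s\nu$, so $\mu_J\ge\mu-s\nu$ and $\Pr(E_J)\le e^{-\mu+s\nu+\Delta/2}\prod_{i\in J}\Pr(X_i=1)$.

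To conclude, sum over $J$: relaxing to all $s$-element subsets, $\sum_J\prod_{i\in J}\Pr(X_i=1)\le\frac1{s!}\bigl(\sum_{i\in I}\Pr(X_i=1)\bigr)^s=\mu^s/s!$, which yields $\Pr(M_s)\le\frac{\mu^s}{s!}e^{-\mu+s\nu+\Delta/2}$. The only part needing care is the first step — pinning down the event $E_J$ that exactly encodes ``maximal present disjoint family of size $s$'' and observing that it factors as an independent product across $\bigcup_{j\in J}A_j$ and its complement; once that is in place, the rest is a direct application of Janson plus the bookkeeping $\mu-\mu_J\le s\nu$, where it matters that the sum defining $\nu$ includes the diagonal term $i=j$, so that $J\subseteq J^+$ is fully accounted for. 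I do not expect any serious obstacle beyond getting this combinatorial description right.
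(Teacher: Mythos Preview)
The paper does not actually prove this lemma; it is stated with a reference to \cite{AS16} (Alon--Spencer), and your argument is precisely the standard one found there: union-bound over candidate maximal families $J$, factor $\Pr(E_J)$ across $\bigcup_{j\in J}A_j$ and its complement by independence, apply Janson to the complement, and use $\mu-\mu_J\le s\nu$. Your derivation of the form $e^{-\mu+\Delta/2}$ from the paper's stated version $e^{-\mu^2/(2\Delta)}$ via $(\mu-\Delta)^2\ge 0$ is a clean way to stay within the paper's toolkit.

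One minor point worth flagging: you correctly note that the step $\sum_{k\in J^+}\Pr(X_k=1)\le s\nu$ needs the sum defining $\nu$ to include the diagonal $i=j$ (so that $J\subseteq J^+$ is absorbed). The paper's later computation of $\nu_D$ in the proof of Lemma~\ref{lm:good_subhypergraph} actually drops the diagonal term, so there is a small convention mismatch \emph{in the paper}, not in your argument; in that application the diagonal contribution is $p^{\binom{t}{r}}=o(1)$ anyway, so nothing breaks downstream.
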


Let us derive from Lemma \ref{lemma:janson-max-disjoint} an upper bound to the probability that there exists a maximal disjoint family of size $s \le (1 - \epsilon)\mu$ for some $\epsilon \in (0, 1)$. 

\begin{corollary}\label{corollary:janson-max-disjoint}
    If $\Delta = o(\mu)$ and $\nu = o(1)$, then, for every $\epsilon\in (0,1)$,
    $$
        \Pr\left(\bigcup_{s \le (1-\epsilon)\mu}\ M_s\right) \le e^{-\epsilon^2 \mu / 3}.
    $$
\end{corollary}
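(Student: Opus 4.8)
The plan is to derive Corollary~\ref{corollary:janson-max-disjoint} directly from Lemma~\ref{lemma:janson-max-disjoint} by summing the bound over all $s\le(1-\epsilon)\mu$ and showing the resulting sum is dominated by its largest term, which occurs at the top of the range. First I would write
\[
    \Pr\left(\bigcup_{s\le(1-\epsilon)\mu} M_s\right)\le\sum_{s\le(1-\epsilon)\mu}\frac{\mu^s}{s!}\,e^{-\mu+s\nu+\Delta/2}.
\]
Using the hypotheses $\Delta=o(\mu)$ and $\nu=o(1)$, the factor $e^{\Delta/2}=e^{o(\mu)}$ and the factor $e^{s\nu}\le e^{(1-\epsilon)\mu\nu}=e^{o(\mu)}$ are both subexponential in $\mu$, so they can be absorbed into the error term in the exponent; the real work is bounding $\sum_{s\le(1-\epsilon)\mu}\mu^s/s!$.

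For that sum, the key observation is that the terms $a_s=\mu^s/s!$ are increasing in $s$ as long as $s<\mu$, since $a_{s+1}/a_s=\mu/(s+1)>1$ there; hence the last term $s_0=\lfloor(1-\epsilon)\mu\rfloor$ is the largest, and moreover the ratio of consecutive terms near $s_0$ is bounded away from $1$ (roughly $\mu/s_0\ge 1/(1-\epsilon)>1$), so the sum is at most a constant multiple (depending on $\epsilon$) of its last term $\mu^{s_0}/s_0!$. Then I would apply Stirling in the form $s_0!\ge(s_0/e)^{s_0}$ to get
\[
    \frac{\mu^{s_0}}{s_0!}\le\left(\frac{e\mu}{s_0}\right)^{s_0}\le\left(\frac{e}{1-\epsilon}\right)^{(1-\epsilon)\mu}=\exp\bigl((1-\epsilon)(1+\ln\tfrac{1}{1-\epsilon})\mu\bigr).
\]
Combining, the whole probability is at most $\exp\bigl(-\mu+(1-\epsilon)(1+\ln\frac{1}{1-\epsilon})\mu+o(\mu)\bigr)$, so it remains to check that the coefficient of $\mu$ is at most $-\epsilon^2/3$ (with room to spare to absorb the $o(\mu)$). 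Set $g(\epsilon)=1-(1-\epsilon)\bigl(1+\ln\frac{1}{1-\epsilon}\bigr)$; one computes $g(0)=0$, and $g'(\epsilon)=-\ln(1-\epsilon)\ge\epsilon\ge 0$, and in fact $g'(\epsilon)=-\ln(1-\epsilon)\ge\epsilon+\epsilon^2/2$, so $g(\epsilon)\ge\epsilon^2/2+\epsilon^3/6\ge\epsilon^2/2$; this exceeds $\epsilon^2/3$ with enough slack that the $o(\mu)$ terms are harmless for large $n$ (and the constant factor from the geometric sum is swallowed as well). That yields the claimed bound $e^{-\epsilon^2\mu/3}$.

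The main obstacle is purely bookkeeping: making sure the various $o(\mu)$ and $O_\epsilon(1)$ factors are genuinely negligible against the gap between $g(\epsilon)\mu$ and $\epsilon^2\mu/3$. This is where one must be slightly careful — for instance if $\mu$ itself is bounded the statement is vacuous or needs the convention that $e^{-\epsilon^2\mu/3}$ is close to $1$, so implicitly $\mu\to\infty$; and one should confirm the elementary inequality $-\ln(1-\epsilon)\ge\epsilon+\epsilon^2/2$ on $(0,1)$ (immediate from the Taylor series with nonnegative terms) to get the clean $\epsilon^2/2$ lower bound on $g$. None of these steps is deep, but the constant $3$ (rather than $2$) in the statement is precisely the cushion that lets all the lower-order contributions be absorbed, so the write-up should make that explicit rather than optimize constants.
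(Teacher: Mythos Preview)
Your proposal is correct and follows essentially the same route as the paper: sum the bound from Lemma~\ref{lemma:janson-max-disjoint}, use $\nu=o(1)$ and $\Delta=o(\mu)$ to control the extra exponential factor, and then bound $\sum_{s\le(1-\epsilon)\mu}\mu^s e^{-\mu}/s!$ by $e^{-\epsilon^2\mu/2}$. The only cosmetic difference is that the paper recognises this sum as $\Pr(\mathrm{Poisson}(\mu)\le(1-\epsilon)\mu)$ and cites a standard tail bound, whereas you reprove that bound from scratch via Stirling and a geometric comparison---your function $g(\epsilon)=\epsilon+(1-\epsilon)\ln(1-\epsilon)$ is exactly the Chernoff exponent $h(-\epsilon)$, so the two computations coincide.
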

\begin{proof}
    By the union bound, 
    \begin{align*}
        \Pr\left(\bigcup_{s \le (1-\epsilon)\mu}\ M_s\right) &\le \sum_{s=1}^{(1-\epsilon)\mu} \Pr(M_s) \le \sum_{s=1}^{(1-\epsilon)\mu} \frac{\mu^s}{s!} e^{-\mu} e^{s\nu + \Delta/2} \\
        &\le e^{\epsilon^3 \mu} \sum_{s=1}^{(1-\epsilon)\mu} \frac{\mu^s}{s!} e^{-\mu},
    \end{align*}
    where the second inequality is true by Lemma \ref{lemma:janson-max-disjoint} and the last inequality is true since, by the assumptions of the corollary, $s\nu + \Delta/2 \le \epsilon^2 \mu / 6$. Note that the sum above equals to the probability that the value of a Poisson random variable with mean $\mu$ is at most $(1-\epsilon)\mu$. Thus, by a typical bound on the tails of the Poisson distribution (see, for example, \cite[Theorem A.1.15]{AS16}),
    \begin{align*}
        \Pr\left(\bigcup_{s \le (1-\epsilon)\mu}\ M_s\right) \le e^{\epsilon^2 \mu / 6}e^{-\epsilon^2 \mu / 2} = e^{-\epsilon^2 \mu / 3}.
    \end{align*}
\end{proof}

We also make use of the following bounds (see, for example, \cite[Claim 2.1]{KS17}).
\begin{claim}\label{claim:binomial-bounds}
    Let $0 < p < 1$ be a constant and $X \sim \text{Bin}(n, p)$ be a binomial variable. Then, for sufficiently large $n$ and for any $a>0$,
    \begin{enumerate}
        \item $\Pr(X \ge np + a) \le e^{-\frac{a^2}{2(np + a/3)}}$,
        \item $\Pr(X \le np - a) \le e^{-\frac{a^2}{2np}}$ and
        \item $\Pr\left(X \le \frac{n}{\log^2 n}\right) \le (1-p)^{n - \frac{n}{\log n}}$.
    \end{enumerate}
\end{claim}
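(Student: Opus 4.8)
The plan is to prove all three estimates by the standard exponential-moment (Chernoff--Bernstein) method, writing $X=\sum_{i=1}^{n}Y_i$ as a sum of i.i.d.\ Bernoulli$(p)$ random variables.

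For part (1) I would fix $\lambda>0$ and apply Markov's inequality to $e^{\lambda X}$. Since $\mathbb{E}\big[e^{\lambda X}\big]=(1-p+pe^{\lambda})^{n}\le \exp\!\big(np(e^{\lambda}-1)\big)$, this gives $\Pr(X\ge np+a)\le \exp\!\big(np(e^{\lambda}-1)-\lambda(np+a)\big)$. Choosing $\lambda=\ln\!\big(1+\tfrac{a}{np}\big)$ turns the exponent into $-np\,\varphi\!\big(\tfrac{a}{np}\big)$, where $\varphi(x)=(1+x)\ln(1+x)-x$, and then the elementary inequality $\varphi(x)\ge \tfrac{x^{2}}{2+2x/3}$ for $x\ge 0$ (which one checks by differentiating $\varphi(x)-\tfrac{3x^2}{6+2x}$) yields exactly $\exp\!\big(-\tfrac{a^{2}}{2(np+a/3)}\big)$. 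Part (2) is the symmetric argument applied to $e^{-\lambda X}$: optimising $\lambda$ produces the exponent $-np\,\psi\!\big(\tfrac{a}{np}\big)$ with $\psi(x)\coloneqq(1-x)\ln(1-x)+x$, and the bound $\psi(x)\ge\tfrac{x^{2}}{2}$ on $[0,1)$ (from $\psi''(x)=\tfrac{x}{1-x}\ge 0$ and $\psi(0)=\psi'(0)=0$) gives $\exp\!\big(-\tfrac{a^{2}}{2np}\big)$; the case $a\ge np$ is trivial since then the event $\{X\le np-a\}$ is empty.

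For part (3) a cruder union bound is cleaner. The event $\{X\le n/\log^{2}n\}$ forces a set $S\subseteq[n]$ with $|S|\ge n-n/\log^{2}n$ and $Y_i=0$ for all $i\in S$, so, writing $m=\lfloor n/\log^{2}n\rfloor$, $\Pr\!\big(X\le n/\log^{2}n\big)\le \binom{n}{m}(1-p)^{\,n-n/\log^{2}n}$. Bounding $\binom{n}{m}\le (en/m)^{m}$ gives a factor $\exp\!\big(O(n\log\log n/\log^{2}n)\big)=\exp\!\big(o(n/\log n)\big)$, while factoring $(1-p)^{\,n-n/\log^{2}n}=(1-p)^{\,n-n/\log n}\cdot(1-p)^{\,n/\log n-n/\log^{2}n}$ shows the last factor is $\exp\!\big(-(1+o(1))\tfrac{n}{\log n}\ln\tfrac{1}{1-p}\big)$, which absorbs the $\exp(o(n/\log n))$ loss because $\ln\tfrac{1}{1-p}$ is a fixed positive constant. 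Hence for $n$ large the product is at most $(1-p)^{\,n-n/\log n}$.

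Since this is a textbook estimate (it appears as \cite[Claim 2.1]{KS17}), I do not expect a genuine obstacle; the only points needing a little care are the two scalar inequalities for $\varphi$ and $\psi$ in parts (1)--(2), and, in part (3), verifying that the $\exp(o(n/\log n))$ contribution of the binomial coefficient is dominated by the gap between the exponents $n-n/\log^{2}n$ and $n-n/\log n$.
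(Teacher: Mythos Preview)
Your argument is correct. Parts (1) and (2) are the standard Bernstein--Chernoff bounds, and your optimisation of the exponential moment gives exactly the stated exponents; the one notational slip is that in part (2) the expression $x/(1-x)$ is the second derivative of $\psi(x)-x^{2}/2$, not of $\psi$ itself (indeed $\psi''(x)=1/(1-x)$), but that is precisely what is needed to conclude $\psi(x)\ge x^{2}/2$. For part (3) your union bound over the complement set together with $\binom{n}{m}\le (en/m)^{m}=\exp\bigl(O(n\log\log n/\log^{2}n)\bigr)$ and the split of the exponent $n-n/\log^{2}n=(n-n/\log n)+(n/\log n-n/\log^{2}n)$ is clean and correct.

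There is nothing to compare with: the paper does not prove this claim at all but simply cites it as \cite[Claim~2.1]{KS17}. Your proposal therefore supplies a self-contained proof where the paper relies on the literature; both routes are entirely standard for these textbook estimates.
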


\subsection{Properties of random hypergraphs}\label{sec: properties}
In this section, we collect some properties of the random hypergraph that will be useful to us throughout the proof.
\begin{lemma}\label{lemma:max-number-of-copies}
    Let $r \ge 3$ and $t \ge 4$ satisfying $t\geq r$ be integers. Let $c > 0$ be a constant and let $p = c n^{-(t + 1 - r)/\left(\binom{t + 1}{r} - 1\right)}$. Then, \textbf{whp}, for every two vertices $u$ and $v$, the number of copies of $K_t^r$ in $G^{r}(n,p)$, which contains $u$ and $v$, is at most $2\binom{n}{t - 2} p^{\binom{t}{r}}$.
\end{lemma}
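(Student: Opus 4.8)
The plan is to prove Lemma~\ref{lemma:max-number-of-copies} by a first-moment (Markov) argument applied to a carefully chosen counting random variable, followed by a union bound over the $\binom{n}{2}$ pairs $\{u,v\}$. Fix two vertices $u,v$. Let $Z_{u,v}$ denote the number of copies of $K_t^r$ in $G^r(n,p)$ containing both $u$ and $v$. A copy of $K_t^r$ through $u$ and $v$ is determined by a choice of $t-2$ further vertices $w_1,\dots,w_{t-2}$ from $[n]\setminus\{u,v\}$, and the copy is present iff all $\binom{t}{r}$ of its edges lie in $G^r(n,p)$; note that the $r$-sets not containing $u$ or $v$ are $\binom{t-2}{r}$ in number, so the number of $r$-sets meeting $\{u,v\}$ is exactly $\binom{t}{r}-\binom{t-2}{r}$. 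Hence
\[
  \mathbb{E}[Z_{u,v}] \;=\; \binom{n-2}{t-2}\, p^{\binom{t}{r}} \;\le\; \binom{n}{t-2}\, p^{\binom{t}{r}} .
\]
A naive application of Markov's inequality to $Z_{u,v}$ only gives a bound on the probability that $Z_{u,v}$ exceeds a \emph{large multiple} of its mean, which is not enough: the target bound $2\binom{n}{t-2}p^{\binom{t}{r}}$ is just twice the mean, and after a union bound over $\binom{n}{2}$ pairs we would need the failure probability for each pair to be $o(n^{-2})$, which first moment on $Z_{u,v}$ alone cannot deliver. So the real plan is to work with a high power of $Z_{u,v}$, i.e.\ to bound $\mathbb{E}[Z_{u,v}^m]$ for a suitably large constant (or slowly growing) $m$, and then apply Markov to $Z_{u,v}^m$.

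The key step is therefore to estimate $\mathbb{E}[Z_{u,v}^m]$. Expanding, $Z_{u,v}^m = \sum Z_{K_1}\cdots Z_{K_m}$ over ordered $m$-tuples of ($K_t^r$-copies through $u,v$), where $Z_K$ is the indicator that copy $K$ is present. For a fixed tuple $(K_1,\dots,K_m)$, the probability all are present is $p^{|E(K_1)\cup\dots\cup E(K_m)|}$. The dominant contribution comes from tuples whose copies are as "edge-independent as possible" — i.e.\ copies that pairwise share only the vertices $u$ and $v$ (and hence only the $\binom{t}{r}-\binom{t-2}{r}$ edges forced to meet $\{u,v\}$), so that $|E(K_1)\cup\dots\cup E(K_m)| = \binom{t}{r}-\binom{t-2}{r} + m\binom{t-2}{r}$ and the number of such tuples is $\big(1+o(1)\big)\binom{n}{t-2}^m$. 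One must show that all other tuples — those in which some pair of copies shares an extra vertex — contribute a lower-order term; here the choice $p = c n^{-(t+1-r)/(\binom{t+1}{r}-1)}$ is exactly what makes every "overlapping" configuration have a strictly smaller power of $n$. This is a standard but somewhat delicate subgraph-counting / "balancedness" computation: for each way two copies can intersect in $j \ge 3$ vertices, one checks that the gain in vertex choices ($n^{-(j-2)}$ roughly, relative to disjoint-in-$[n]\setminus\{u,v\}$) is outweighed by the saving of at least $\binom{j}{r}-\binom{j-2}{r}\cdot(\text{something})$... precisely, one verifies that removing the overlap strictly decreases the exponent because $p$ sits at (or below) the threshold coming from $K_{t+1}^r$, which is the relevant "densest" obstruction. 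The cleanest way to organise this is: $\mathbb{E}[Z_{u,v}^m] \le \big(1+o(1)\big)\big(\mathbb{E}[Z_{u,v}]\big)^m$, possibly after also invoking the FKG/Harris inequality to lower-bound $\mathbb{E}[Z_{u,v}]$ by the product form and to keep correlations one-signed where convenient.

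Granting $\mathbb{E}[Z_{u,v}^m] \le (1+o(1))\mu^m$ with $\mu := \mathbb{E}[Z_{u,v}] \le \binom{n}{t-2}p^{\binom{t}{r}}$, Markov gives
\[
  \Pr\!\left(Z_{u,v} \ge 2\binom{n}{t-2}p^{\binom{t}{r}}\right)
  \;\le\; \Pr\!\left(Z_{u,v}^m \ge 2^m \mu^m\right)
  \;\le\; \frac{\mathbb{E}[Z_{u,v}^m]}{2^m \mu^m}
  \;\le\; \frac{1+o(1)}{2^m}.
\]
Choosing $m = m(n) \to \infty$ (e.g.\ $m = \log n$, which is still small enough for the "$(1+o(1))$" in the moment bound to hold with our $p$ — this is the point to check carefully) makes this $o(n^{-2})$, and a union bound over the $\binom{n}{2}$ pairs $\{u,v\}$ finishes the proof. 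I expect the main obstacle to be the moment estimate: one has to show $\mathbb{E}[Z_{u,v}^m] = (1+o(1))\mu^m$ \emph{uniformly} for $m$ growing as fast as $\log n$, which requires controlling the combinatorial explosion of intersection patterns among $m$ copies simultaneously, not just pairwise. The saving grace is that $p$ is at the $K_{t+1}^r$-threshold, so any genuine overlap of two copies contains a sub-configuration that is "at least as dense as $K_{t+1}^r$ restricted appropriately," forcing a polynomial-in-$n$ loss per overlap; summing over the at most $O(m^2)$ possible pairwise overlaps in a tuple, and bounding the number of overlapping tuples crudely, still beats the $2^{-m}$ from Markov as long as $m$ does not grow too fast. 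The condition $t \ge 4$ (rather than just $t \ge r$) presumably enters here to guarantee $t-2 \ge 2$, so that "sharing only $u,v$" is a genuinely available and dominant pattern; the case $t=3$ would degenerate and is excluded.
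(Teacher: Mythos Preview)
Your approach is essentially the paper's: bound the $L$-th moment of $Z_{u,v}$ for $L=K\log n$, show $\mathbb{E}[Z_{u,v}^L]=((1+o(1))\mu)^L$, apply Markov to get failure probability $(1/2+o(1))^L\ll n^{-2}$, and union-bound over pairs. The paper resolves the ``main obstacle'' you flag by an inductive conditioning: peel off one clique at a time via $\mathbb{E}[X^L]=\sum\mathbb{E}[Y_{T_1}\cdots Y_{T_{L-1}}]\cdot\mathbb{E}[X\mid Y_{T_1}=\cdots=Y_{T_{L-1}}=1]$ and show $\mathbb{E}[X\mid H\subset G]\le(1+o(1))\mu$ uniformly over all $H$ with $e(H)\le Lt$ (the overlap term $\sum_{j=r}^t e(H)^{\binom{j}{r}}n^{t-j}p^{\binom{t}{r}-\binom{j}{r}}$ is $o(\mu)$ precisely because $\binom{j}{r}(t+1-r)/(\binom{t+1}{r}-1)<j-2$), with FKG giving the matching lower bound; note also that since $r\ge 3$, two copies sharing only $\{u,v\}$ share \emph{no} edges, so your formula for $|E(K_1)\cup\dots\cup E(K_m)|$ should simply be $m\binom{t}{r}$ in the dominant case.
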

\begin{proof}
    Fix two vertices $u$ and $v$. Denote by $X$ the number of copies of $K_t^r$ in $G \sim G^{r}(n,p)$ which contain $u$ and $v$. By the union bound over all the choices of $u$ and $v$, it suffices to prove that
    \[
        \mathbb{P}\left(X > 2\binom{n}{t-2} p^{\binom{t}{r}}\right) \ll n^{-2}.
    \]    

    Set $L = K \log n$ where $K$ is a sufficiently large constant. For every subset $T \subset V(G)$ of size $t$, denote by $Y_T$ the indicator random variable of the event $G[T] \cong K_t^r$. Below, we consider only those $t$-subsets of $V(G)$ that contain $u$ and $v$ (in particular, sum up over such sets in the computation of the $L$-th moment of $X$ below). We have
    \begin{align}
        \mathbb{E}[X^L] &= \sum_{\substack{T_1, \ldots, T_{L} \subset V(G) \\ |T_1| = \ldots = |T_{L}| = t}} \mathbb{E}[Y_{T_1} \cdot \ldots \cdot Y_{T_{L}}]\notag \\
        &= \sum_{\substack{T_1, \ldots, T_{L-1} \subset V(G) \\ |T_1| = \ldots = |T_{L-1}| = t}} \mathbb{E}[Y_{T_1} \cdot \ldots \cdot Y_{T_{L-1}}] \sum_{T_L \subset V(G), |T_L| = t} \mathbb{E}[Y_{T_L} \mid Y_{T_1} = \ldots = Y_{T_{L-1}} = 1]\notag \\
        &= \sum_{\substack{T_1, \ldots, T_{L-1} \subset V(G) \\ |T_1| = \ldots = |T_{L-1}| = t}} \mathbb{E}[Y_{T_1} \cdot \ldots \cdot Y_{T_{L-1}}] \cdot \mathbb{E}[X \mid Y_{T_1} = \ldots = Y_{T_{L-1}} = 1].
    \label{eq:Lth_moment_ap}
    \end{align}

    Let $H$ be a hypergraph with $e(H) \le L \cdot t$. By FKG inequality, $\mathbb{E}[X \mid H \subset G]\geq\mathbb{E}X$.  On the other hand,
    \begin{align*}
        \mathbb{E}[X \mid H \subset G] \le \binom{n}{t-2} p^{\binom{t}{r}} + \sum_{j = r}^{t} e(H)^{\binom{j}{r}} n^{t - j} p^{\binom{t}{r} - \binom{j}{r}}.
    \end{align*}
    The first summand in the right-hand side above is an upper bound to the expectation of the number of copies of $K_t^r$ which does not share any edges with $H$ while the second summand is an upper bound to the expectation of the number of copies of $K_t^r$ which does share at least one edge with $H$. The index $j$ indicates how many vertices in the copy of $K_t^r$ belongs to $H$. So we have at most $\binom{j}{r}$ edges which are already in $H$. The term $e(H)^{\binom{j}{r}}$ is an upper bound to the number of choices of the common $j$ vertices, $n^{t-j}$ is an upper bound to the number of choices of the remaining $t - j$ vertices and $p^{\binom{t}{r} - \binom{j}{r}}$ is an upper bound to the probability that this copy appears in $G$ conditioned on $H \subseteq G$. We will show that the second summand on the right-hand side of the inequality is asymptotically smaller than 
    \begin{equation}
    \mathbb{E}[X]=(1+o(1))\binom{n}{t-2}p^{\binom{t}{r}},
    \label{eq:expect_rooted_cliques_ap}
    \end{equation}
    implying $\mathbb{E}[X\mid H\subset G]\leq(1+o(1))\mathbb{E}[X]$. Note that, for every $j = r, r+1, \ldots, t$, we have
    \begin{align*}
        \frac{n^{t - j} p^{\binom{t}{r} - \binom{j}{r}}}{n^{t-2} p^{\binom{t}{r}}} = n^{2 - j} p^{-\binom{j}{r}} = \Theta\left(n^{2 - j + \frac{\binom{j}{r}(t+1-r)}{\binom{t+1}{r}-1}}\right).
    \end{align*}
    Thus, it is sufficient to show that the last expression tends to 0 as $n\to\infty$, which is true if
    \[
        \frac{\binom{j}{r}(t+1-r)}{\binom{t+1}{r}-1} < j - 2.
    \]

    To show the latter, we will use the following claim.

    \begin{claim}\label{claim:t-condition1_ap}
        $\frac{\binom{t}{r} (t + 1 - r)}{\left(\binom{t+1}{r} - 1\right)(t-1)} < 1 - \frac{r - 1}{t}$.     
    \end{claim}
    \begin{proof}
        We have
        \begin{align*}
            \frac{\binom{t}{r} (t + 1 - r)}{\left(\binom{t+1}{r} - 1\right)(t-1)} &< \frac{\binom{t}{r}}{\binom{t+1}{r} - 1} = \frac{\binom{t}{r}}{\binom{t}{r} + \binom{t}{r - 1} - 1} \\ &= \frac{\binom{t}{r}}{\binom{t}{r} + \binom{t}{r} \cdot \frac{r}{t-r+1}  - 1} = \frac{1}{1 + \frac{r}{t-r+1} - \frac{1}{\binom{t}{r}}}\\
            &\le 1 - \frac{r}{t-r+1} + \frac{1}{\binom{t}{r}} \leq 1 - \frac{r}{t - r + 1} + \frac{1}{t - r + 1} \\
            &=1-\frac{r-1}{t-r+1}< 1 - \frac{r - 1}{t}.
        \end{align*}        
    \end{proof}
    
    By Claim \ref{claim:t-condition1_ap},
    \begin{align*}
        \frac{\binom{j}{r}(t+1-r)}{\binom{t+1}{r}-1} &< \frac{\binom{j}{r}}{\binom{t}{r}} \left(1 - \frac{r - 1}{t}\right)(t-1) \le \frac{\binom{j}{r}}{\binom{t}{r}} (t-2)=
        \frac{j(j-1)\ldots(j-r+1)}{t(t-1)\ldots(t-r+1)}(t-2) \\&\stackrel{r\geq 3} \leq j-2,
    \end{align*}
    as needed.
    
    Thus, we conclude that $\mathbb{E}[X\mid H\subset G]=(1+o(1))\mathbb{E}[X]$ uniformly over all $H$ with $e(H)\leq L\cdot t$.
    By induction, from~\eqref{eq:Lth_moment_ap}, we have
    \[
        \mathbb{E}[X^L] = \biggl((1 + o(1))\mathbb{E}[X]\biggr)^L.
    \]
    From~\eqref{eq:expect_rooted_cliques_ap}, by Markov's inequality, we get
    \begin{align*}
        \mathbb{P}\left(X > 2\binom{n}{t-2} p^{\binom{t}{r}}\right) \le \frac{\mathbb{E}[X^L]}{((2+o(1))\mathbb{E}[X])^L} = \left(\frac{(1 + o(1))\mathbb{E}[X]}{2\mathbb{E}[X]}\right)^L = (1/2+o(1))^L \ll n^{-2},
    \end{align*}
    where the last asymptotical inequality is true if $K$ is sufficiently large.
\end{proof}

\begin{lemma}
\label{lm:good_subhypergraph}
    Let $r \ge 3$ and $t \ge r$ be integers. There exist positive constants $c_0$ and $c_1$ such that if 
    \[
        p \ge c_1 n^{-\frac{t + 1 - r}{\binom{t + 1}{r} - 1}}\quad \text{and}\quad
        k \geq c_0 n^{\frac{\binom{t}{r}(t + 1 - r)}{\left(\binom{t+1}{r} - 1\right)(t-1)}} \log^{\frac{1}{t-1}} n,
    \]    
    then \textbf{whp} $G^{r}(n,p)$ contains a subhypergraph $H$ on $[n]$ such that
    \begin{itemize}
    \item $H$ is $K_{t+1}^r$-free,
    \item every induced $k$-subhypergraph of $H$ contains a copy of $K_t^r$.
    \end{itemize}    
\end{lemma}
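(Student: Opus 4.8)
The plan is an alteration argument in the spirit of Krivelevich. Take $G\sim G^r(n,p)$; by the standard monotone coupling it suffices to handle $p=c_1n^{-(t+1-r)/(\binom{t+1}{r}-1)}$, and since the second conclusion is inherited by larger $k$-sets we may take $k$ equal to the lower bound in the statement. From each copy of $K^r_{t+1}$ present in $G$ delete one edge, say its lexicographically first, and let $H$ be what remains. As $H\subseteq G$ no new copy of $K^r_{t+1}$ is created, and each old copy has lost an edge, so $H$ is $K^r_{t+1}$-free; all the work is in showing that, with room to spare, every $k$-set still spans a copy of $K^r_t$ in $H$.

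Fix a $k$-set $W$. Write $\mu_W=\binom{k}{t}p^{\binom{t}{r}}$ for the expected number of copies of $K^r_t$ in $G[W]$; with the above parameters one checks $\mu_W=\Theta(k\log n)$, the extra $\log^{1/(t-1)}n$ factor in $k$ being exactly what produces this order. Let $M_W$ be the maximum number of pairwise edge-disjoint copies of $K^r_t$ in $G[W]$. Applying Corollary~\ref{corollary:janson-max-disjoint} to the family of copies of $K^r_t$ inside $G[W]$ — whose hypotheses $\Delta=o(\mu)$, $\nu=o(1)$ follow from the value of the exponent of $p$, since $\Delta/\mu$ and $\nu$ are governed by quantities such as $k^{t-r}p^{\binom{t}{r}-1}=o(1)$ — yields $M_W\ge\tfrac12\mu_W$ with failure probability $e^{-\Omega(\mu_W)}$. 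Next, a copy of $K^r_t$ in $G[W]$ is destroyed only if one of its edges was deleted, hence only if it shares an edge (equivalently, at least $r$ vertices) with some copy of $K^r_{t+1}$ in $G$; so the number of destroyed copies is at most the number $Z_W$ of pairs $(T,C)$ with $T\subseteq W$ a copy of $K^r_t$, $C$ a copy of $K^r_{t+1}$ in $G$, and $|T\cap C|\ge r$. Consequently $H[W]$ contains at least $M_W-Z_W$ copies of $K^r_t$, and it suffices to prove $Z_W<\tfrac12\mu_W$ for every $W$ simultaneously.

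A first-moment computation, splitting on the overlap size $|T\cap C|\in\{r,\dots,t\}$, shows the dominant contribution comes from $|T\cap C|=r$ and that $\mathbb{E}[Z_W]\le C_{r,t}\,n^{t+1-r}p^{\binom{t+1}{r}-1}\mu_W$; the exponent of $p$ in the statement is chosen precisely so that $n^{t+1-r}p^{\binom{t+1}{r}-1}$ equals the constant $c_1^{\binom{t+1}{r}-1}$ (and the larger overlap sizes contribute a genuinely smaller power of $n$ — here one uses inequalities of the same flavour as Claim~\ref{claim:t-condition1}), so $\mathbb{E}[Z_W]\le C_{r,t}\,c_1^{\binom{t+1}{r}-1}\mu_W$, a tiny multiple of $\mu_W$ once $c_1$ is small. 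To upgrade this to $Z_W\le 2\,\mathbb{E}[Z_W]$ simultaneously over all $W$, I would bound a high moment $\mathbb{E}[Z_W^{\,L}]$ exactly as in the proof of Lemma~\ref{lemma:max-number-of-copies} — using FKG for the lower bound on conditional expectations and checking that conditioning on $O(L)$ edges inflates $\mathbb{E}[Z_W]$ by only a constant factor close to $1$ — with $L=\epsilon'\mathbb{E}[Z_W]=\Theta(k\log n)$, and conclude by Markov that $\Pr\!\bigl(Z_W>2\,\mathbb{E}[Z_W]\bigr)\le e^{-\Omega(k\log n)}$. Choosing $c_1$ small enough that $2C_{r,t}c_1^{\binom{t+1}{r}-1}<\tfrac12$, and then $c_0$ large enough that both $e^{-\Omega(\mu_W)}$ and $e^{-\Omega(k\log n)}$ are $o\bigl(\binom{n}{k}^{-1}\bigr)$, a union bound over all $k$-sets $W$ finishes the proof.

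The main obstacle is exactly this uniformity over the $\binom{n}{k}$ choices of $W$: since $\mu_W$ and $\mathbb{E}[Z_W]$ are only of order $k\log n\approx\log\binom{n}{k}$, the Janson and moment tail bounds above are essentially tight, and it is the precise value of $k$ (including the $\log^{1/(t-1)}n$ factor) together with the two-stage choice of constants — $c_1$ small to force $\mathbb{E}[Z_W]\ll\mu_W$, then $c_0$ large to beat the union bound — that makes everything fit. A secondary point, absent for graphs, is that $Z_W$ must be controlled through the way copies of $K^r_t$ sit inside copies of $K^r_{t+1}$ (a single deleted edge can lie in many copies of $K^r_t$), so a crude maximum-degree bound on the number of destroyed copies is too lossy and the moment/Janson estimate is genuinely needed.
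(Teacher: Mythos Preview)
Your overall alteration scheme and the Janson lower bound on $M_W$ match the paper's exactly. The genuine gap is in the upper-tail step for $Z_W$.

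You propose to control $\Pr(Z_W>2\mathbb{E}[Z_W])$ by the $L$-th moment with $L=\epsilon'\mathbb{E}[Z_W]=\Theta(k\log n)$, invoking the mechanism of Lemma~\ref{lemma:max-number-of-copies}. But that lemma works because there $L=O(\log n)$: after conditioning on at most $L$ structures the auxiliary hypergraph $H$ has only polylogarithmically many edges, so copies that reuse edges of $H$ contribute $o(\mathbb{E}X)$. In your situation $e(H)=\Theta(k\log n)$, and the uniform bound $\mathbb{E}[Z_W\mid H]\le(1+o(1))\mathbb{E}[Z_W]$ is simply false. For a concrete obstruction take $t=r$ and let the $L$ conditioned pairs be chosen so that their union $H$ is a complete $r$-graph on $m$ vertices inside $W$, with $m$ satisfying $\binom{m}{r}\approx (r+1)L$; this is achievable since each pair contributes $r+1$ edges. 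Then $H$ already contains $\binom{m}{r+1}=\Theta(L^{(r+1)/r})$ copies of $K^r_{r+1}$, hence $\Theta(L^{(r+1)/r})\gg\mathbb{E}[Z_W]$ pairs $(T,C)$ are present with probability $1$, and the inductive step $\mathbb{E}[Z_W^L]\le\bigl((1+o(1))\mathbb{E}[Z_W]\bigr)^L$ fails.

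The paper avoids any upper-tail estimate on a total count. It deletes instead \emph{all} edges of an inclusion-maximal edge-disjoint family $\mathcal{H}$ of copies of $K^r_{t+1}$, and replaces your $Z_W$ by $Z_S$, the maximum size of an edge-disjoint family of configurations ``$K^r_t$ in $S$ glued along at least $r$ vertices to a $K^r_{t+1}$''. The tail of $Z_S$ then comes for free from Lemma~\ref{lemma:janson-disjoint-family}: $\Pr(Z_S\ge 5\mu_Y)\le(e/5)^{5\mu_Y}=e^{-\Omega(\mu_Y)}$, with no conditioning at all. The price is that one can no longer argue via ``survivors $\ge M_W-Z_W$''; instead one builds an auxiliary graph on the $X_S$ edge-disjoint $K^r_t$'s, in which two are adjacent when some member of $\mathcal{H}$ meets both, shows its maximum degree is at most $\lambda-1$ with $\lambda=\binom{t}{r}\bigl(\binom{t+1}{r}-1\bigr)+1$, and extracts an independent set of size $X_S/\lambda>Z_S$, forcing one copy to survive. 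Note that this independent-set step genuinely uses the edge-disjointness of $\mathcal{H}$, so switching to your ``delete the lex-first edge of every $K^r_{t+1}$'' rule would not mesh with it; both the deletion rule and the edge-disjoint-family count are part of the same package.
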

\begin{proof}
    We may assume that $k$ is the smallest possible integer satisfying the requirement in the statement of Lemma \ref{lm:good_subhypergraph}, that is
    $$
        k = \left\lceil c_0 n^{\frac{\binom{t}{r}(t + 1 - r)}{\left(\binom{t+1}{r} - 1\right)(t-1)}} \log^{\frac{1}{t-1}} n \right\rceil.
    $$
    Also, since the property that the lemma claims is increasing, without the loss of generality, we may set 
    $$
    p = c_1 n^{-\frac{t + 1 - r}{\binom{t + 1}{r} - 1}}.
    $$

    Let $G \sim G^{r}(n,p)$. Let $\mathcal{K}$ be a set of all graphs obtained as a union of $K_t^r$ and $K_{t+1}^r$ sharing at least $r$ vertices (we include a single representative of every isomorphism class into $\mathcal{K}$). 
    For $S \subset [n]$,
    \begin{itemize}
    \item denote by $X_S$ the maximum size of a set of pairwise edge-disjoint copies of $K_t^r$ in $G[S]$;
    \item denote by $Y_S$ the number of subhypergraphs $H\subset G$ isomorphic to a hypergraph from $\mathcal{K}$ and such that $H[V(H)\cap S]$ contains a copy of $K_t^r$;
    \item denote by $Z_S$ the maximum size of a set of pairwise edge-disjoint hypergraphs $H\subset G$ isomorphic to a hypergraph from $\mathcal{K}$ and such that $H[V(H)\cap S]$ contains a copy of $K_t^r$.
    \end{itemize}
    Let $A_S$ be the event that $X_S > \lambda Z_S$ where
    \[
        \lambda := \binom{t}{r}\left(\binom{t+1}{r} - 1\right) + 1.
    \]

    \begin{claim}
        If $A_S$ holds for every subset $S \subset V(G)$ of size $k$, then $G$ contains a subhypergraph $H$ which is $K_{t+1}^r$-free and every induced $k$-subhypergraph of $H$ contains a copy of $K_t^r$.
    \end{claim}
    \begin{proof}
        Let $\mathcal{H}$ be an inclusion-maximal family of edge-disjoint copies of $K_{t+1}^r$ in $G$. Denote by $G' \subset G$ the subhypergraph obtained by deleting from $G$ all edges that appear in a clique from $\mathcal{H}$. Note that $G'$ is $K_{t+1}^r$-free. We claim that every induced $k$-subhypergraph of $G'$ contains $K_t^r$, i.e. $G'$ is the desired hypergraph.
    
        Assume by contradiction that there exists a $k$-set $S \subset [n]$ such that there is no copy of $K_t^r$ in $G'[S]$. Let $\mathcal{K}_S$ be a family of edge-disjoint copies of $K_t^r$ in $G[S]$ of size $X_S$. Consider an auxiliary graph $G_S$ with the vertex set $\mathcal{K}_S$ and the edge set defined as follows: cliques $K^1,K^2$ from $\mathcal{K}_S$ are adjacent if there exists $K'\in\mathcal{H}$ sharing an edge with $K^1$ and sharing an edge with $K^2$. Fix $K\in\mathcal{K}_S$. Since cliques in $\mathcal{H}$ are edge-disjoint, $K$ has common edges with at most $\binom{t}{r}$ elements of $\mathcal{H}$. On the other hand, each clique from $\mathcal{H}$ that has a common edge with $K$ shares edges with at most $\binom{t+1}{r} - 1$ other cliques from $\mathcal{K}_S$. Thus, the maximum degree of $G_S$ is at most $\binom{t}{r}\left(\binom{t+1}{r} - 1\right) = \lambda - 1$. 
        Hence, $G_S$ contains an independent set of size at least $X_S / \lambda$. However, since $X_S / \lambda > Z_S$, this corresponds to a family of strictly more than $Z_S$ edge-disjoint copies of hypergraphs from $\mathcal{K}$ having a copy of $K_t^r$ in $S$. Hence, there exists a copy of $K_t^r$ in $G[S]$ which does not share any edges with elements from $\mathcal{H}$ and thus this copy exists also in $G'[S]$, a contradiction.
    \end{proof}

    To finish the proof of Lemma~\ref{lm:good_subhypergraph} it suffices to prove the following claim.
    \begin{claim}
        \textbf{Whp} $A_S$ holds for every $S \subset V(G)$ of size $k$.
    \end{claim}
    \begin{proof}
        Fix $S \subset V(G)$ of size $k$. We first bound a lower-tail of $X_S$ using Corollary \ref{corollary:janson-max-disjoint}. Next, we bound an upper-tail of $Z_S$ using Lemma \ref{lemma:janson-disjoint-family}. We use the notations of Janson's inequality described in Section \ref{section:prem}.
        
        Let us start with a lower-tail estimate of $X_S$. Denote by $D_S$ the number of copies of $K_t^r$ in $G[S]$. We have
        \begin{align*}
            \mu_D &:= \mathbb{E}[D_S] 
            = \binom{k}{t} p^{\binom{t}{r}}, \\
            \Delta_D &:= \sum_{\substack{T_1, T_2 \subset S \\ |T_1| = |T_2| = t \\ |T_1 \cap T_2| \ge r}}\mathbb{P}(G[T_1]\cong G[T_2]\cong K_t^r)
            = \mu_D \sum_{i=r}^{t - 1} \binom{t}{i} \binom{k - t}{t - i} p^{\binom{t}{r} - \binom{i}{r}}, \\
            \nu_D &= \max_{T \subset S, |T| = t} \sum_{i=r}^{t - 1} \binom{t}{i} \binom{k - t}{t - i} p^{\binom{t}{r}}.
        \end{align*}

        To make sure we can apply Corollary \ref{corollary:janson-max-disjoint}, we show that $\mu_D = o(1)$ and $\Delta_D = o(\mu_D)$. We have        
        \begin{align*}
            \nu_D &= \Theta\left(k^{t - r} p^{\binom{t}{r}}\right) = \Theta\left(n^{\frac{(t-r)\binom{t}{r}(t+1-r)}{\left(\binom{t+1}{r} - 1\right)(t-1)} - \frac{\binom{t}{r}(t+1-r)}{\binom{t + 1}{r} - 1}} \log^{\frac{t-r}{t-1}} n\right) \\
            &= \Theta\left(n^{\frac{\binom{t}{r}(t+1-r)}{\binom{t+1}{r} - 1}\left(\frac{t-r}{t-1} - 1\right)} \log^{\frac{t-r}{t-1}} n\right) = o(1).
        \end{align*}
        In order to show that $\Delta_D = o(\mu_D)$, we need to show that $k^{t-i} p^{\binom{t}{r} - \binom{i}{r}} = o(1)$ for every integer $r \le i \le t - 1$. Fix such an integer $i$. Then,
        \begin{align*}
            k^{t-i} p^{\binom{t}{r} - \binom{i}{r}} &= n^{\frac{(t-i)\binom{t}{r}(t+1-r)}{\left(\binom{t+1}{r} - 1\right)(t-1)} - \left(\binom{t}{r} - \binom{i}{r}\right)\frac{t+1-r}{\binom{t + 1}{r} - 1}} \log^{\frac{t-i}{t-1}} n \\
            &= n^{\frac{t+1-r}{\binom{t+1}{r} - 1}\left(\frac{(t-i)\binom{t}{r}}{t-1} - \binom{t}{r} + \binom{i}{r}\right)} \log^{\frac{t-i}{t-1}} n \\
            &= n^{\frac{t+1-r}{\binom{t+1}{r} - 1}\left(-\frac{i - 1}{t-1}\binom{t}{r} + \binom{i}{r}\right)} \log^{\frac{t-i}{t-1}} n = o(1),
        \end{align*}
        where the last equality is true since $\binom{t}{r} > \binom{i}{r} \cdot \frac{t - 1}{i - 1}$. Indeed,
        \begin{align*}
            \binom{t}{r} > \binom{i}{r} \cdot \frac{t - 1}{i - 1} &\Longleftrightarrow \frac{t!}{(t-r)!} > \frac{i!}{(i-r)!} \cdot \frac{t-1}{i-1} \Longleftrightarrow \frac{t(t-1)\cdot \ldots \cdot(t-r+1)}{i(i-1)\cdot \ldots \cdot(i-r+1)} > \frac{t-1}{i-1},
        \end{align*}
        and the last inequality is true since we have $t > i$ and $r \ge 2$ and thus 
        $$
            \frac{t(t-1)\cdot \ldots \cdot(t-r+1)}{i(i-1)\cdot \ldots \cdot(i-r+1)} \ge \frac{t}{i} \cdot \frac{t-1}{i-1} > \frac{t-1}{i - 1}.
        $$

        Therefore, by Corollary \ref{corollary:janson-max-disjoint}, for every $\epsilon \in (0,1)$,         
        \begin{align}\label{eq:X_S-bound}
            \mathbb{P}(X_S < (1-\epsilon)\mu_D) \le e^{-\epsilon^2 \mu_D/3}.
        \end{align}

        Next, we analyse the upper-tail of $Z_S$. We will denote $\mathbb{E}[Z_S]$ and $\mathbb{E}[Y_S]$ by $\mu_Z$ and $\mu_Y$, respectively. Clearly, $Z_S \le Y_S$ and thus $\mu_Z \le \mu_Y$. Thus, by Lemma \ref{lemma:janson-disjoint-family}, 
        \begin{align}\label{eq:Z_S-bound}
            \mathbb{P}(Z_S \ge 5 \mu_Y) \le \frac{\mu_Z^{5\mu_Y}}{(5 \mu_Y)!} \le \left(\frac{e \mu_Z}{5 \mu_Y}\right)^{5\mu_Y} \le e^{-(\log 5 - 1) 5\mu_Y}.
        \end{align}

        Calculating $\mu_Y$,
        \begin{align*}
            \mu_Y = \Theta\left(k^t \sum_{\ell = r}^{t} n^{t - \ell + 1} p^{\binom{t+1}{r} + \binom{t}{r} - \binom{\ell}{r}}\right).
        \end{align*}
        For every $t \ge \ell > r$,
        \begin{align*}
            &k^t n^{t-r+1} p^{\binom{t+1}{r} + \binom{t}{r} - \binom{r}{r}} \gg k^t n^{t - \ell + 1} p^{\binom{t+1}{r} + \binom{t}{r} - \binom{\ell}{r}} \\&\Longleftrightarrow n^{\ell - r} \gg p^{1-\binom{\ell}{r}} \\
            &\Longleftrightarrow \ell - r > -\left(1-\binom{\ell}{r}\right)\frac{t+1 - r}{\binom{t+1}{r}-1} \\
            &\Longleftrightarrow \frac{\ell - r}{\binom{\ell}{r} - 1} > \frac{t + 1 - r}{\binom{t+1}{r} - 1}.
        \end{align*}
        The last inequality is true since the function $f(x) = \frac{x - r}{\binom{x}{r} - 1}$ is decreasing when $x > r$. Thus, there exist positive constants $c_2$ and $c_3$, which do not depend on $c_0$ and $c_1$, such that
        \[
            c_2 \binom{k}{t} n^{t - r + 1} p^{\binom{t}{r} + \binom{t+1}{r} - 1} \le \mu_Y \le c_3 \binom{k}{t} n^{t - r + 1} p^{\binom{t}{r} + \binom{t+1}{r} - 1}.
        \]
        Let us choose $c_1$ such that $c_3 n^{t - r + 1} p^{\binom{t+1}{r}-1} = \frac{1}{10\lambda}$. Recall that $\mu_D = \binom{k}{t} p^{\binom{t}{r}}$. We have
        \[
            \frac{10 \lambda c_3}{c_2} \ge \frac{\mu_D}{\mu_Y} \ge 10\lambda.
        \]

        Notice that if $X_S > 0.5\mu_D$ and $Z_S < \mu_D / (2\lambda)$, then $X_S > \lambda Z_S$ and thus $A_S$ holds. Therefore, the probability that $A_S$ does not hold is at most
        \begin{align*}
            \mathbb{P}(X_S \le 0.5 \mu_D) + \mathbb{P}(Z_S \ge \mu_D / (2\lambda)) &\le \mathbb{P}(X_S \le 0.5 \mu_D) + \mathbb{P}(Z_S \ge 5\mu_Y) \\ 
            &\le e^{-\mu_D/12} + e^{-5\mu_D(\log 5 - 1)c_2 / 10\lambda c_3} \le e^{-c_4 \mu_D},
        \end{align*}
        for some constant $c_4$ which does not depend on $c_0$ and $c_1$. The second inequality is true by \eqref{eq:X_S-bound} and \eqref{eq:Z_S-bound}. We have
        \begin{align*}
            \mu_D &\ge \left(\frac{k}{t}\right)^t p^{\binom{t}{r}} \ge \left(\frac{c_0}{t}\right)^t n^{\frac{t}{t-1} \cdot \frac{\binom{t}{r}(t+1-r)}{\binom{t+1}{r}-1}} \log^{\frac{t}{t-1}} n \cdot c_1^t n^{-\frac{\binom{t}{r}(t+1-r)}{\binom{t+1}{r}-1}} \\
            &= \left(\frac{c_0 c_1}{t}\right)^t n^{\frac{\binom{t}{r}(t+1-r)}{(\binom{t+1}{r}-1)(t-1)}} \log^{\frac{t}{t-1}} n.
        \end{align*}
        Hence,
        $$  
            \Pr(A_S) \le \exp\left(-c_4 \left(\frac{c_0 c_1}{t}\right)^t n^{\frac{\binom{t}{r}(t+1-r)}{\left(\binom{t+1}{r}-1\right)(t-1)}} \log^{\frac{t}{t-1}} n\right).
        $$
        
        By the union bound over all choices of $S$, the probability that there exists $S$ such that $A_S$ does not hold is at most
        \begin{align*}
            \Pr\left(\bigcup_S A_S \right) &\le \binom{n}{k} \exp\left(-c_4 \left(\frac{c_0 c_1}{t}\right)^t n^{\frac{\binom{t}{r}(t+1-r)}{\left(\binom{t+1}{r}-1\right)(t-1)}} \log^{\frac{t}{t-1}} n\right)\\
            &\le \exp\left(\left\lceil c_0 n^{\frac{\binom{t}{r}(t+1-r)}{\left(\binom{t+1}{r}-1\right)(t-1)}} \log^{\frac{t}{t-1}} n \right\rceil -c_4 \left(\frac{c_0 c_1}{t}\right)^t n^{\frac{\binom{t}{r}(t+1-r)}{\left(\binom{t+1}{r}-1\right)(t-1)}} \log^{\frac{t}{t-1}} n\right) \\
            &= o(1),
        \end{align*}
        where the last equality is true if $c_0$ and $c_1$ are sufficiently large.
    \end{proof}
    The proof of Lemma~\ref{lm:good_subhypergraph} is complete.
\end{proof}

\subsection{Proof of the upper bound of Theorem \ref{th: wsat}\ref{i: strong}}\label{sec: strong proof}

    Set $G \coloneqq G^r(n,p)$. Set $\alpha = \frac{1}{1 - p^{r-1}}$ and $\beta = \frac{1}{1 - p^{\binom{s}{r} - \binom{s - r}{r} - 1}}$. Unless stated otherwise, then the base of all the logs in the proof is $\alpha$. Set
    \[
        a_1 = \frac{1}{p} \log n \left(1 + \frac{3}{\log \log n}\right),\quad a_2=s \log_\beta (n^{r})\quad \text{and}\quad a_3 = \frac{a_2}{\left(\log a_2\right)^{1/r}}.
    \]
    
    Let $A_1, A_2, A_3$ be disjoint subsets of $V(G)$ of sizes $a_1, a_2, a_3$, respectively. Let $B$ denote the set of the remaining vertices of $V(G)$. Set $t = s - r$ and
    \[
        q = c_1 a_1^{-\frac{t + 1 - r}{\binom{t + 1}{r} - 1}}.
    \]
    If $t<r$, let $H_1$ be the empty hypergraph on $A_1$. Otherwise, if $t\ge r$, let $H_1\subseteq G[A_1]$ be the following spanning subhypergraph, which by Lemmas \ref{lemma:max-number-of-copies} and \ref{lm:good_subhypergraph} \textbf{whp} exists:
    \begin{itemize}
        \item $H_1$ is $K_{t+1}^r$-free.
        \item There exists a copy of $K_t^r$ in every induced subhypergraph of $H_1$ of size at least $$T = c_0 a_1^{\frac{\binom{t}{r}(t + 1 - r)}{\left(\binom{t+1}{r} - 1\right)(t-1)}} \log^{\frac{1}{t-1}} a_1.$$
        \item Every two vertices in $A_1$ are contained in at most $2 \binom{a_1}{t-2}q^{\binom{t}{r}}$ copies of $K_t^r$ in $H_1$.
    \end{itemize}
    Let $H$ be a subhypergraph of $G$ with the following edges:
    \begin{itemize}
        \item All edges of $H_1$.
        \item All edges of $G$ of the form $A'\cup B'$ where $A'\subseteq A_1$ with $1\le |A'|\le t$ and $B'\subseteq B$ with $0<|B'|=s-|A'|$.
    \end{itemize}
    We will say that an edge $e \in E(G) \setminus E(H)$ can be \textit{completed} if it closes a copy of $K_s^r$ together with $E(H)$. First, observe that there are no copies of $K_s^r$ in $H$. Indeed, consider a set $X$ of $s$ vertices from $A_1\cup B$, and suppose towards contradiction that $X$ induces a clique in $H$. Since $H[B]$ is an empty hypergraph, we have that $|X\cap B|\le r-1$, that is, $|X \cap A_1|\ge s-(r-1)=t+1$. Thus, if $t+1\ge r$, since $H[A_1]$ is $K_{t+1}^r$-free we have that $H$ is $K_s^r$-free. Otherwise, if $t+1\le r-1$, we note that there are no edges with $t+1$ vertices from $A_1$ and $r-(t+1)$ vertices from $B$.


    We will now show that most of the edges in $G[B]$ can be completed in $H$. Given an $(r-1)$-set of vertices $S\subset V$, the \textit{neighbourhood} of $S$ in $X\subset V$ is
    \begin{align*}
        N_{X}(S)\coloneqq \{v\in X\colon \{v\}\cup S\in E(G)\}.
    \end{align*}
    Furthermore, given an $(r-1)$-set of vertices $S \subset B$, we say that $S$ is \textit{good} if
    \[
        |N_{A_1}(S)| \ge m \coloneqq  \left(1 + \frac{2}{\log \log n}\right) \log n,
    \]
    and otherwise we say that $S$ is \textit{bad}. Moreover, we say that an edge $e \in G[B]$ is $\textit{good}$ if there exists at least one good $(r-1)$-subset $S \subset e$ and otherwise we say that $e$ is \textit{bad}. 

    \begin{claim}\label{appendix c: good edges}
        \textbf{Whp}, all good edges in $G[B]$ but $o(n^{r-1} \log n)$ can be completed.
    \end{claim}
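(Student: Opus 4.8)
The plan is to fix a good edge $e \in E(G[B])$, pick a good $(r-1)$-subset $S \subset e$, and use the large neighbourhood $N_{A_1}(S)$ together with the structure of $H_1$ to close a copy of $K_s^r$ on $e$. First I would pass from $N_{A_1}(S)$ to the restricted set $N_{S,e} \coloneqq \{v \in N_{A_1}(S)\colon \{v\}\cup S' \in E(G)\text{ for every }(r-1)\text{-set }S'\subseteq e\}$; conditioning on $N_{A_1}(S)$ (which is determined by edges touching $S$ only), the size $|N_{S,e}|$ is distributed as $\mathrm{Bin}(|N_{A_1}(S)|, p^{r-1})$ since the relevant edges $\{v\}\cup S'$ with $S'\neq S$ are not yet exposed. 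Because $S$ is good, $|N_{A_1}(S)|\ge m = (1+\tfrac{2}{\log\log n})\log n$, and a Chernoff bound (Claim~\ref{claim:binomial-bounds}) gives $|N_{S,e}| \ge p^{r-1} m - o(\log n) \ge T$ with room to spare, except on an event of probability $n^{-\omega(1)}$ per edge — small enough to union-bound over the $\le n^{r-1}$ choices of $(S,e)$ and conclude that \textbf{whp} all but $o(n^{r-1}\log n)$ good edges have $|N_{S,e}|\ge T$ for their good $(r-1)$-subset. Since $H_1$ has a copy of $K_t^r$ in every induced $T$-subhypergraph, $H_1[N_{S,e}]$ contains at least one, and in fact many: a standard counting/supersaturation argument shows $H_1[N_{S,e}]$ contains $\Omega(|N_{S,e}|^{?})$ — at least a superconstant number $M$ — of copies of $K_t^r$.

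Next, for each copy $K$ of $K_t^r$ inside $H_1[N_{S,e}]$, consider the event $\mathcal{E}_K$ that $V(K)\cup e$ induces $K_s^r$ in $H$. The edges of $H$ already present among $V(K)\cup e$ are: the $\binom{t}{r}$ edges inside $K$ (present, since $K\subseteq H_1$), all edges $A'\cup B'$ with $\varnothing\neq A'\subseteq V(K)$, $B'\subseteq e$, $1\le|A'|\le t$ — these are in $H$ by the second bullet of the definition of $H$, \emph{provided} they lie in $G$. So $\mathcal{E}_K$ holds exactly when all these "mixed" edges $A'\cup B'$ (with $|A'|\ge 1$, $|B'|\ge 1$, $|A'|+|B'|=r$, $|A'|\le t$) are in $G$; the number of such edges is $N_0 \coloneqq \binom{s}{r} - \binom{t}{r} - \binom{r}{r} = \binom{s}{r}-\binom{t}{r}-1$ (excluding the all-$V(K)$ edges and the edge $e$ itself, the latter being the one we want to create). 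Note the edges with $|A'|=0$, i.e. $B'=e$, is just $e$ which we are completing. Edges with $|A'|=r$ sit inside $V(K)\subseteq N_{S,e}$, but those were forced into $H_1$ only if they are edges of $H_1$ — here is a subtlety: $H_1$ is a \emph{proper} subhypergraph of $G[A_1]$, so not every triple inside $V(K)$ is guaranteed. However $K$ itself is a clique of $H_1$, so all $\binom{t}{r}$ of its edges are in $H_1\subseteq H$; that is all we need. Thus $\Pr(\mathcal{E}_K) = p^{N_0}$, a positive constant, once we expose the mixed edges. The remaining task is to show that \textbf{whp}, for all but $o(n^{r-1}\log n)$ good edges $e$, at least one $\mathcal{E}_K$ holds.

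For this I would apply Janson's inequality (Theorem~\ref{theorem:janson}) to the family $\{\mathcal{E}_K\}_K$ over the $\ge M$ copies $K$ in $H_1[N_{S,e}]$, with ground randomness being the mixed edges $A'\cup B'$. Here $\mu = M p^{N_0}$ is superconstant, and two events $\mathcal{E}_K, \mathcal{E}_{K'}$ are dependent only when $K$ and $K'$ share a mixed edge, which forces $|V(K)\cap V(K')|\ge r-1 \ge 2$; since two distinct copies of $K_t^r$ are determined by their vertex sets, dependence requires $|V(K)\cap V(K')|\ge 2$. The key input is the third bullet in the definition of $H_1$: every two vertices of $A_1$ lie in at most $2\binom{a_1}{t-2}q^{\binom t r}$ copies of $K_t^r$ of $H_1$ — this controls the number of pairs $(K,K')$ with $|V(K)\cap V(K')|\ge 2$ and hence $\Delta$. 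A computation (tracking the exponents of $a_1$, using $q = c_1 a_1^{-(t+1-r)/(\binom{t+1}{r}-1)}$ and $a_1 = \Theta(\log n)$) should give $\Delta = o(\mu^2)$, so $\Pr(\bigcap_K \overline{\mathcal{E}_K}) \le e^{-\mu^2/2\Delta} = n^{-\omega(1)}$, and a union bound over $(S,e)$ finishes. \textbf{The main obstacle} I anticipate is precisely this second-moment bound: one must carefully bound $\Delta$ using the "no two vertices in too many $K_t^r$'s" property, check that it beats $\mu^2$ with the specific polylogarithmic value of $a_1$ and the specific $q$, and handle the fact that the copies $K$ live in the random set $N_{S,e}$ while the events $\mathcal{E}_K$ use a fresh, independent portion of randomness — so the conditioning must be set up cleanly (expose edges at $S$, then edges $\{v\}\cup S'$ for $v\in N_{A_1}(S)$ to fix $N_{S,e}$ and $H_1$-structure, then finally the mixed edges). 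The exceptional $o(n^{r-1}\log n)$ edges are exactly those good edges whose good subset $S$ fails the $|N_{S,e}|\ge T$ bound or fails the Janson bound, and there are few enough of them by the union-bound estimates above; by Observation~\ref{obs: small amount of edges} these can simply be added to $H$.
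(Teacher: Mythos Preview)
Your approach is the same as the paper's: restrict to $N_{S,e}$, count copies of $K_t^r$ in $H_1[N_{S,e}]$, bound the overlap pairs via the ``no two vertices in too many $K_t^r$'s'' property, and apply Janson. The Janson step does give an $e^{-(\log n)^{1+\epsilon}}=n^{-\omega(1)}$ bound, exactly as you say.

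The gap is in the first step. You claim that Chernoff gives $|N_{S,e}|\ge T$ except with probability $n^{-\omega(1)}$, and then propose a union bound. This fails: $|N_{S,e}|\sim\mathrm{Bin}(|N_{A_1}(S)|,p^{r-1})$ with $|N_{A_1}(S)|=\Theta(\log n)$ trials, so its lower tail is only $n^{-\Theta(1)}$, not $n^{-\omega(1)}$. Concretely, even $\Pr(|N_{S,e}|=0)=(1-p^{r-1})^{|N_{A_1}(S)|}\approx \alpha^{-\log_\alpha n}=1/n$, so you cannot push below $1/n$ no matter how small a threshold $T$ you aim for. And there are $\Theta(n^r)$ good edges to handle (not $n^{r-1}$ as you wrote), so a union bound over edges does not go through.

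The fix, which is what the paper does, is to replace the union bound by Markov: show that for each fixed good edge $e$ the conditional probability of failure is at most $2/n$ (at most $1/n$ from the binomial tail for $|N_{S,e}|$, and $o(1/n)$ from Janson), conclude that the expected number of uncompleted good edges is $O(n^{r-1})$, and then apply Markov's inequality to get that \textbf{whp} only $o(n^{r-1}\log n)$ remain. Once you make this change the rest of your outline matches the paper's argument. One minor point: the shared mixed edges have $|A'|\ge 2$ (not $|A'|\ge r-1$ as you wrote), because edges with $|A'|=1$ are already determined by the definition of $N_{S,e}$; your conclusion $|V(K)\cap V(K')|\ge 2$ is nevertheless correct and is exactly what feeds into the pair-count bound.
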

    \begin{proof}
        Fix an $(r-1)$-set $S\subset B$ and expose all edges of the form $\{v\}\cup S$, $v\in A_1$. Let us assume that these edges justify the validity of the event that $S$ is good. Fix an $r$-subset $e\subset B$ containing $S$, noting that we then have that $e$ is good. We will show that the probability that $e$ cannot be completed is less than $\frac{2}{n}$. If so, then the expectation of the number of good edges in $G[B]$ which cannot be completed is $O(n^{r-1})$ and then, by Markov's inequality, there are \textbf{whp} $o(n^{r-1} \log n)$ such edges.

        Since $S$ is good, the size of the neighbourhood of $S$ in $A_1$, denoted by $N_{A_1}(S)$, is at least $m$. We will give an upper bound to the probability that there is no copy of $K_t^r \subseteq H[N_{A_1}(S)]$ which closes a copy of $K_s^r$ together with $e$. Notice that, for a given vertex $v \in N_{A_1}(S)$, the probability that $\{v\} \cup S' \in E(G)$ for all $S' \subset e$ of size $r-1$ is exactly $p^{r - 1}$. Define
        \[
            N_{S,e} \coloneqq \{v \in N_{A_1}(S) \colon \{v\} \cup S' \in E(G) \text{ for all } S' \subset e,\ |S'| = r-1\}. 
        \]
        Then, given $S$ and $N_{A_1}(S)$, $|N_{S,e}| \sim \text{Bin}(|N_{A_1}(S)|, p^{r-1})$. By Claim \ref{claim:binomial-bounds},
        \[
            \mathbb{P}\left(|N_{S,e}| \ge \frac{|N_{A_1}(S)|}{\log^2 |N_{A_1}(S)|}\right) \ge 1 - (1-p^{r-1})^{|N_{A_1}(S)| - \frac{|N_{A_1}(S)|}{\log |N_{A_1}(S)|}} \ge 1 - \frac{1}{n},
        \]
        where the last inequality holds since
        \[
            |N_{A_1}(S)| - \frac{|N_{A_1}(S)|}{\log |N_{A_1}(S)|} \ge m\left(1 - \frac{1}{\log \log n}\right) > \log n.
        \]

        Let us expose all edges of the form $\{v\}\cup S'$, $S'\subset e$, $v\in N_{A_1}(S)$, and let us suppose from now that $|N_{S,e}| \ge \frac{|N_{A_1}(S)|}{\log^2 |N_{A_1}(S)|}$ and thus $|N_{S,e}| > a_1 ^ {1-\epsilon}$ for some small constant $\epsilon > 0$. Denote the family of copies of $K_t^r$ in $H[N_{S,e}]$ by $\mathcal{K}$. Since in every induced subhypergraph of $H[N_{S,e}]$ of size $T$ there is a copy of $K_t^r$ and every copy of $K_t^r$ appears in at most $\binom{|N_{S,e}| - t}{T - t}$ subsets of size $T$, we have
        \[
            |\mathcal{K}| \ge \frac{\binom{|N_{S,e}|}{T}}{\binom{|N_{S,e}| - t}{T - t}} > \left(\frac{|N_{S,e}|}{T}\right)^{t} = \Omega\left(a_1^{t - t\epsilon - \frac{t \binom{t}{r} (t + 1 - r)}{\left(\binom{t+1}{r} - 1\right)(t-1)}} \log^{-\frac{t}{t-1}} a_1\right).
        \] 
        
        By Claim \ref{claim:t-condition1_ap}, 
        \[
            \left(\frac{|N_{S,e}|}{T}\right)^{t} = \Omega\left(a_1^{r - 1 - t\epsilon}\log^{-\frac{t}{t-1}} a_1\right) \ge a_1^{1 + \epsilon_1},
        \]
        where the last inequality is true for some small enough $\epsilon_1 > 0$.

        Denote by $F$ the number of pairs $(K, K') \in \mathcal{K}$ satisfying $|V(K) \cap V(K')| \ge 2$. We have that $F = O\left(|\mathcal{K}| a^{t-2} q^{\binom{t}{r}}\right)$. Note that $\frac{1}{T^t} = \Omega\left(\frac{q^{\binom{t}{r}} \log a_1}{T}\right)$ and thus 
        $$
            |\mathcal{K}| = \Omega\left(a_1^{t - t \epsilon} q^{\binom{t}{r}} T^{-1} \log a_1\right) \ge a_1^{t - 2t \epsilon} q^{\binom{t}{r}} T^{-1}.
        $$
        Then,
        \begin{align*}
            \frac{|\mathcal{K}|^2}{F} = \Omega\left(\frac{|\mathcal{K}|}{a_1^{t-2} q^{\binom{t}{r}}}\right) = \Omega\left(\frac{a_1^{t - 2t \epsilon}}{Ta_1^{t-2}}\right).
        \end{align*}
        By Claim \ref{claim:t-condition1_ap}, $T \le a_1^{1-\frac{1}{t}}$. Hence,
        \begin{align}\label{align:Kappa-F-ratio}
            \frac{|\mathcal{K}|^2}{F} = \Omega\left(a_1^{2 - 2t\epsilon - 1 + \frac{1}{t}}\right) \ge a_1^{1 + \epsilon_2},
        \end{align}
        for some $\epsilon_2 > 0$.

        Recall that we want to give an upper bound for the probability that $e$ cannot be completed. For every $K \in \mathcal{K}$, denote by $B_K$ the event that $G[K \cup e] \cong K_s$. Note that, if $B_K$ holds, then $e$ can be completed. We will give an upper bound for  $\Pr\left(\cap_{K \in \mathcal{K}} B_K^c\right)$ using Theorem \ref{theorem:janson}. For every $K \in \mathcal{K}$, we already have the edges induced by $K$ and edges with one vertex from $K$ and $r-1$ vertices from $e$. Therefore, two events $B_K$ and $B_{K'}$ are dependent only if $|K \cap K'| \ge 2$. Moreover, the probability that $B_K$ holds is at most $p^{\binom{s}{r}} = O(1)$. We have,
        \begin{align*}
            \mu &\coloneqq \sum_{K \in \mathcal{K}} \Pr(B_k) = \Theta(|\mathcal{K}|), \\
            \Delta &\coloneqq \sum_{\substack{K, K' \in \mathcal{K} \\ |V(K) \cap V(K')| \ge 2}} \Pr(B_K \cap B_{K'}) < F.
        \end{align*}
        Hence, by Theorem \ref{theorem:janson}, 
        \begin{align*}
            \Pr\left(\cap_{K \in \mathcal{K}} B_K^c\right) &\le e^{-\mu^2 / 2\Delta} \le e^{-\Theta(|\mathcal{K}|^2 / F)} \le e^{-a_1^{1 + \epsilon_3}} < \frac{1}{n},
        \end{align*}
        where the third inequality is true for some $0<\epsilon_3<\epsilon_2$ by \eqref{align:Kappa-F-ratio}. Therefore, the expected number of uncompleted good edges in $G[B]$ is $O(n^{r-1})$ and thus, by Markov's inequality, \textbf{whp} there are $o(n^{r - 1} \log n)$ uncompleted good edges in $G[B]$.
    \end{proof}

    Before dealing with bad edges in $G[B]$, the following claim gives an upper bound to the number of bad $(r-1)$-sets in $B$.
    \begin{claim}\label{claim:bad-sets-in-B}
        There are at most $\frac{n^{r-1}}{\log n}$ bad $(r-1)$-subsets $S \subset B$.
    \end{claim}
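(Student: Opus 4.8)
The plan is a direct first-moment computation. Fix an $(r-1)$-subset $S\subset B$. The $a_1$ potential edges $\{v\}\cup S$ with $v\in A_1$ are present in $G$ independently with probability $p$, so $|N_{A_1}(S)|\sim \mathrm{Bin}(a_1,p)$, with mean $pa_1=\log n\bigl(1+\tfrac{3}{\log\log n}\bigr)$. The set $S$ is bad exactly when $|N_{A_1}(S)|<m=\log n\bigl(1+\tfrac{2}{\log\log n}\bigr)$, i.e.\ when $\mathrm{Bin}(a_1,p)$ falls below its mean by at least $pa_1-m=\tfrac{\log n}{\log\log n}$.

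First I would apply the lower-tail bound of Claim~\ref{claim:binomial-bounds}(2) with $a=\tfrac{\log n}{\log\log n}$, which gives
$$
\Pr(S\text{ is bad})\ \le\ \exp\!\left(-\frac{a^2}{2pa_1}\right)\ =\ \exp\!\left(-\frac{\log n}{2(\log\log n)^2}\bigl(1-o(1)\bigr)\right)\ =\ n^{-\Omega(1/(\log\log n)^2)},
$$
where the last equality uses that the base $\alpha$ of the logarithms is a constant. By linearity of expectation, the expected number of bad $(r-1)$-subsets of $B$ is at most $\binom{n}{r-1}n^{-\Omega(1/(\log\log n)^2)}$. Since $\log n\gg(\log\log n)^3$, we have $n^{-\Omega(1/(\log\log n)^2)}=o(1/\log n)$, so this expectation is $o\!\left(n^{r-1}/\log n\right)$. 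Markov's inequality then yields that \textbf{whp} there are fewer than $\tfrac{n^{r-1}}{\log n}$ bad $(r-1)$-subsets of $B$.

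The only point that needs care — and the reason this claim is not entirely automatic — is checking that the gap $pa_1-m$ between the mean and the threshold, which is merely a $\tfrac{1}{\log\log n}$-fraction of the mean, is still wide enough that the resulting tail probability $n^{-\Theta(1/(\log\log n)^2)}$ beats $1/\log n$ even after multiplying by $\binom{n}{r-1}$; this is precisely where the specific constants $1+\tfrac{3}{\log\log n}$ in the definition of $a_1$ and $1+\tfrac{2}{\log\log n}$ in the definition of $m$ are used. (Alternatively, one could observe that the events ``$S$ is bad'' for distinct $(r-1)$-sets $S$ depend on pairwise disjoint sets of potential edges and are therefore mutually independent, but Markov's inequality already suffices for the stated bound, so I would not pursue that.)
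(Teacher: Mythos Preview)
Your proposal is correct and follows essentially the same approach as the paper: compute $\Pr(S\text{ is bad})$ via the Chernoff-type lower-tail bound of Claim~\ref{claim:binomial-bounds}(2) applied to $|N_{A_1}(S)|\sim\mathrm{Bin}(a_1,p)$ with deviation $a=pa_1-m=\tfrac{\log n}{\log\log n}$, conclude that this probability is $o(1/\log n)$, and finish by linearity of expectation and Markov's inequality. Your write-up is in fact a bit more explicit than the paper's about the exponent (you correctly get $\exp\bigl(-\Theta(\log n/(\log\log n)^2)\bigr)$ and verify this beats $1/\log n$), and your parenthetical observation about independence is also correct, though, as you note, unnecessary.
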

    \begin{proof}
        Fix an $(r-1)$-subset $S \subset B$. Recall that
        \[
            N_{A_1}(S) = \{v \in A_1 \colon \{v\} \cup S \in E(G)\}.
        \]
        Note that $|N_{A_1}(S)| \sim \text{Bin}(a_1, p)$. By Claim \ref{claim:binomial-bounds},
        \[
            \mathbb{P}(|N_{A_1}(S)| < m) = \mathbb{P}\left(|N_{A_1}(S)| < a_1 p - \frac{\log n}{\log \log n}\right) \le e^{-\log n / 4 \log \log n} = o(1/\log n).
        \]
        Hence, the claim is true by Markov's inequality.
    \end{proof}

    We will use $A_2$ to complete bad edges in $G[B]$. By Lemma \ref{lm:good_subhypergraph}, there exists a spanning subhypergraph $H_2 \subseteq G[A_2]$ which is $K_{t+1}^r$-free and, for some sufficiently small $\delta$ with respect to $t$, every subset of $a_2^{1-\delta}$ vertices induces a copy of $K_t^r$. Observe that there are at least
    \[
        \frac{a_2}{t} - a_2^{1-\delta} \ge \frac{a_2}{s} = \log_\beta (n^r)
    \]
    vertex-disjoint copies of $K_t^r$ in $H_2[A_2]$.

    Add to $H$ the following edges:
    \begin{itemize}
        \item edges from $H_2$,
        \item edges intersecting both $A_2$ and $B$ in the graph $G$ with at least $2$ and at most $t$ vertices from $A_2$,
        \item edges of the form $S\cup \{v\}$ where $S\subseteq B$ is a bad $(r-1)$-subset and $v\in A_2$.
    \end{itemize}
    Before showing how to complete bad edges utilising these edges, let us first note that there are still no copies of $K_s^r$ in $H$. As we only added edges induced by $A_2\cup B$, and there are no edges intersecting both $A_1$ and $A_2$, we may consider a set $X\subseteq A_2\cup B$ of $s$ vertices, and suppose towards contradiction that $X$ induces a clique in $H$. Since $H[B]$ is empty, we have that $|X\cap B|\le r-1$ and $|X\cap A_2|\ge s-r+1=t+1$. Similar to before, if $t+1\ge r$, we have a contradiction since $H[A_2]$ is $K_{t+1}^r$-free, and, if $t+1\le r-1$, then note that we have not added to $H$ edges with $s-r+1=t+1$ vertices from $A_2$ and the rest from $B$, and we once again obtain a contradiction.

    Let us now consider some bad edge $e$ in $G[B]$. Given a copy of $K_t^r \subseteq H[A_2]$, the probability that this copy fails to complete $e$ is exactly $1/\beta$. Since we have at least $\log_\beta(n^r)$ vertex-disjoint copies of $K_t^r$ in $H[A_2]$, the probability that $e$ cannot be completed is at most $n^{-r}$. By Claim \ref{claim:bad-sets-in-B}, the number of bad edges in $B$ is at most $\frac{n^{r-1}}{\log n} \cdot n$. Therefore, the expectation of the number of bad edges that cannot be completed is at most $1 / \log n$. By Markov's inequality, \textbf{whp} all the bad edges in $G[B]$ can be completed.

    Next, we consider edges of the form $S \cup \{v\}$ where $S$ is a good $(r-1)$-set in $B$ and $v \in A_2$. We have $\Theta(n^{r-1} \log n)$ edges of this form, so we are not allowed to add them to $H$; thus, we will have to complete most of them. We will use $A_3$ for that.

    Once again, \textbf{whp} there exists a spanning subhypergraph $H_3 \subset G[A_{3}]$ which is $K_{t+1}^r$-free and such that every set of $a_3^{1-\delta}$ vertices in $H_3$, for some small constant $\delta>0$, contains a copy of $K_t^r$. Let us add to $H$ the following edges:
    \begin{itemize}
        \item edges from $H_3$,
        \item edges intersecting both $B$ and $A_3$ in the graph $G$ with at least two and at most $t$ vertices from $A_3$,
        \item edges of the form $S\cup \{v\}$ where $S\subseteq B$ is a good $(r-1)$-subset and $v\in A_{3}$,
        \item edges intersecting both $B\cup A_{2}$ and $A_{3}$ in the graph $G$ with exactly one vertex from $A_{2}$ and at most $t$ vertices from $A_{3}$.
    \end{itemize}
    Let us first show that $H$ is still $K_s^r$-free. Since no edges are intersecting both $A_1$ and $A_2\cup A_3$, it suffices to consider $X\subseteq B\cup A_{2}\cup A_{3}$ of size $s$, and suppose towards contradiction that $X$ induces a clique in $H$. Once again, we note that $|X\cap B|\le r-1$. Now, if there are no vertices from $A_3$, we are done by the previous arguments, as we did not add new edges which do not intersect $A_3$. If there are at least $t+1$ vertices from $A_{3}$, we are done by similar arguments to before. Furthermore, if there are at least two vertices in $A_2$, note that there are no edges in $H$ containing two vertices from $A_2$ and at least one vertex from $A_3$, leading to a contradiction. Thus, we are left with the case where $|X\cap A_{2}|=1, |X\cap A_{3}|=t, |X\cap B|=r-1$. Now, if $X\cap B$ is bad, there are no edges of the form $(X\cap B)\cup \{v\}$, $v\in A_3$, in $H$, and if it is good, then there are no edges of the form $(X\cap B)\cup \{v\}$, $v\in A_2$, in $H$ --- either way, we get that $X$ cannot induce a clique.
    
    Let us now consider an edge $e \in G$ of the form $S \cup \{v\}$ where $S$ is a good $(r-1)$-set in $B$ and $v \in A_{2}$. As before, there are at least $a_3/s$ vertex-disjoint copies of $K_t^r$ in $H[A_{3}]$. The probability that a given copy fails to complete $e$ is exactly $1/\beta$. Thus, the probability that $e$ cannot be completed is at most $\beta^{-a_3 / s}$. Therefore, the expectation of such edges $e$ that cannot be completed is at most
    \[
        n^{r-1} a_2 \beta^{-a_3 / s} \ll n^{r-1} \log n,
    \]
    and thus, by Markov's inequality, \textbf{whp} all but at most $o(n^{r-1} \log n)$ edges of the form $S \cup \{v\}$, where $S$ is a good $(r-1)$-set in $B$ and $v \in A_2$, are completed.\\

    Let us count the edges in $H$:
    \begin{itemize}
        \item Edges intersecting both $B$ and $A_1$. There are $(1+o(1)) \binom{n}{r-1} \log n$ such edges.
        \item Edges intersecting both $B$ and $A_2$. There are $o(n^{r-1})$ such edges.
        \item Edges intersecting both $B\cup A_{2}$ and $A_{3}$. There are $o(n^{r-1} \log n)$ such edges.
        \item Some of the edges induced by $A_1\cup A_2\cup A_3$, of which there are at most $o(n)$.
    \end{itemize}
    Note that we have completed all edges except for the following: we did not complete edges of the form $S\cup \{v\}$ where $S\subseteq B$ is bad and $v\in A_3$ as well as some of the edges induced by $A_1\cup A_2\cup A_3$. Furthermore, if $t<r$, we have not completed edges intersecting both $B$ and $A_1\cup A_2\cup A_3$ with more than $t$ vertices in either $A_1$, $A_2$, or $A_3$. However, there are at most $o(n^{r-1}\log n)$ edges of these types, and we are thus done by Observation \ref{obs: small amount of edges}.
    Hence, $H$ has $(1+o(1)) \binom{n}{r-1} \log n$ edges and $o(n^{r-1} \log n)$ additional edges make it saturated.

\end{document}